\definecolor{darkblue}{rgb}{0,0,0.8}
\definecolor{darkgreen}{rgb}{0,0.4,0}
\newtheorem{thm}{Theorem}[section]
\newtheorem{prop}[thm]{Proposition}
\newtheorem{lem}[thm]{Lemma}
\newtheorem{cor}[thm]{Corollary}
\theoremstyle{definition}
\theoremstyle{remark}
\newtheorem{rem}[thm]{Remark}
\newtheorem{rems}[thm]{Remarks}
\newtheorem{Step}{Step}
\numberwithin{equation}{section}
\newcommand{\Ker}{\mathrm{Ker}}
\newcommand{\cl}{\mathrm{cl}}
\newcommand{\Zar}{\mathrm{Zar}}
\newcommand{\et}{\mathrm{\acute{e}t}}
\newcommand{\red}{\mathrm{red}}
\newcommand{\an}{\mathrm{an}}
\newcommand{\ab}{\mathrm{ab}}
\newcommand{\Frob}{\mathrm{Frob}}
\newcommand{\Spec}{\mathrm{Spec}}
\newcommand{\Frac}{\mathrm{Frac}}
\newcommand{\Gal}{\mathrm{Gal}}
\newcommand{\Div}{\mathrm{Div}}
\newcommand{\Pic}{\mathrm{Pic}}
\newcommand{\Br}{\mathrm{Br}}
\newcommand{\ddiv}{\mathrm{div}}
\newcommand{\h}{\mathrm{h}}
\newcommand{\llangle}{\langle\kern-.1em\langle}
\newcommand{\rrangle}{\rangle\kern-.1em\rangle}
\newcommand{\isoto}{\myxrightarrow{\,\sim\,}}
\def\myrightarrow{{\setbox\z@\hbox{$\rightarrow$}\dimen0\ht\z@\multiply\dimen0 6\divide\dimen0 10\ht\z@\dimen0\box\z@}}
\def\myrightarrowfill@{\arrowfill@\relbar\relbar\myrightarrow}
\newcommand{\myxrightarrow}[2][]{\ext@arrow 0359\myrightarrowfill@{#1}{#2}}
\def\loccit{\emph{loc}.\kern3pt \emph{cit}.{}\xspace}
\def\eg{e.g.\kern.3em}
\def\ie{i.e.,\ }
\def\resp {\text{resp.}\kern.3em}
\def\Z{\mathbb Z}
\def\F{\mathbb F}
\def\Q{\mathbb Q}
\def\P{\mathbb P}
\def\R{\mathbb R}
\def\cA{\mathcal{A}}
\def\cN{\mathcal{N}}
\def\cM{\mathcal{M}}
\def\cL{\mathcal{L}}
\def\cO{\mathcal{O}}
\def\cJ{\mathcal{J}}
\def\cM{\mathcal{M}}
\def\cP{\mathcal{P}}
\def\cI{\mathcal{I}}
\def\km{\mathfrak{m}}
\def\ka{\mathfrak{a}}
\def\tq{\tilde{q}}
\def\oF{\overline{\mathbb{F}}}
\def\wP{\widetilde{\mathbb{P}}}
\def\wD{\widetilde{D}}
\def\wE{\widetilde{E}}
\def\wX{\widetilde{X}}
\def\ws{\widetilde{s}}
\def\whX{\widehat{X}}
\begin{document}

\title[The Pythagoras number of fields of transcendence degree \texorpdfstring{$1$}{1} over \texorpdfstring{$\Q$}{Q}]{The Pythagoras number of fields of transcendence degree \texorpdfstring{$1$}{1} over \texorpdfstring{$\Q$}{Q}}

\author{Olivier Benoist}
\address{D\'epartement de math\'ematiques et applications, \'Ecole normale sup\'erieure, CNRS,
45 rue d'Ulm, 75230 Paris Cedex 05, France}
\email{olivier.benoist@ens.fr}

\renewcommand{\abstractname}{Abstract}

\begin{abstract}
We show that any sum of squares in a field of transcendence degree~$1$ over $\Q$ is a sum of $5$ squares,
answering a question of Pop and Pfister. 
We deduce this result from a representation theorem, in $k(C)$,  for qua\-dratic forms of rank $\geq 5$ with coefficients in $k$, where $C$ is a curve over a number~field~$k$.
\end{abstract}

\maketitle

\section{Introduction}

\subsection{Sums of squares and Pythagoras numbers}

The \textit{Pythagoras number} $p(F)$ of a field $F$ is the smallest integer~$p$ such that any sum of squares in $F$ is a sum of $p$ squares in $F$ (or $+\infty$ if no such integer exists). It measures the complexity of sums of squares problems in the field $F$. We refer to \cite[Chap.\,7 \S 1]{Pfisterbook} for a general introduction to this invariant.

Euler proved in \cite{Euler} that a nonnegative rational number is a sum of $4$ squares of rational numbers (see \cite{Pieper} for historical comments). Siegel \cite{Siegel} extended Euler's theorem to an arbitrary number field: an element of a number field~$k$ that is nonnegative in all its real embeddings is a sum of $4$ squares in $k$. These results imply that $p(k)\leq 4$
(and in fact $p(\Q)=4$ as~$7$ is not a sum of $3$ squares in~$\Q$). 

In \cite{Pourchet}, Pourchet considered the case of the field $k(t)$ of rational functions in one variable over a number field $k$. There, he proved that~$p(k(t))\leq 5$ for any number field $k$ (and in fact~$p(\Q(t))=5$ as~$t^2+7$ is not a sum of $4$ squares in~$\Q(t)$), thereby improving on an earlier bound $p(\Q(t))\leq 8$ due to Landau \cite{Landau}. 

\vspace{1em}

The aim of the present article is to prove a similar result for general fields $F$ of transcendence degree $1$ over $\Q$. For such fields, the inequality $p(F)\leq 7$ was shown by Colliot-Th\'el\`ene in \cite{CTAppendix}. The stronger bound $p(F)\leq 6$ was obtained by Pop in~1991~\cite{Popnote} (his result was published only recently in \cite{Pop}). Whether this is optimal or not remained open (see \cite[Chap.\,7 Conjecture~1.10\,(1)]{Pfisterbook} and \mbox{\cite[\S 3]{Pop}}). We prove that it is not optimal, and that Pourchet's bound on the Pythagoras number of $k(t)$ can be extended to all these fields.

\begin{thm}
\label{main1}
Let $F$ be a field of transcendence degree $1$ over $\Q$. Then $p(F)\leq 5$.
\end{thm}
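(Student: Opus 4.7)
The field $F$ is a finitely generated field of transcendence degree $1$ over $\Q$, so after choosing a model we may write $F=k(C)$ where $k$ is a number field and $C$ is a smooth, projective, geometrically integral curve over $k$. Given a sum of squares $f\in F$, I want to prove that the six-dimensional form $\langle 1,1,1,1,1,-f\rangle$ is isotropic over $F$. Following the strategy announced in the abstract, I would deduce Theorem~\ref{main1} from a representation theorem of the following shape: if $q$ is a non-degenerate quadratic form of rank $\geq 5$ with coefficients in the number field $k$, and if $f\in k(C)^{\times}$ is represented by $q$ over every completion of $k(C)$, then $q$ represents $f$ over $k(C)$. Applied to $q=\langle 1,1,1,1,1\rangle$ and $f$ a sum of squares, the local hypotheses are immediate: at any non-real completion the rank-five form $q$ is universal by local theory, and at any real completion $f$ is positive, so the positive-definite $q$ represents $f$ over the real closure.

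To establish the representation theorem, the geometric set-up is the following. After spreading out, consider the quadric bundle $\pi\colon X\to C$ cut out in $\P^5\times C$ by the equation $q(x_1,\ldots,x_5)-f\cdot x_0^2=0$. Its generic fiber is a smooth four-dimensional quadric $Q/F$, and $q$ represents $f$ over $F$ if and only if $Q(F)\neq\emptyset$. The local hypotheses translate into the statement that $Q$ acquires a point in every completion of $F$, so the question is a local-global principle for rational points on the quadric $Q$ over the function field $F=k(C)$. The two natural ingredients are: (i) Hasse-type results for quadrics (and pencils of quadrics) over function fields of curves over number fields, in the spirit of the work of Colliot-Th\'el\`ene and collaborators, which reduce the problem to analysing a Brauer--Manin obstruction; (ii) the special structural feature that $q$ is constant along $C$, so that $X$ is essentially a twist, by the divisor of $f$, of the product of a fixed quadric over $k$ with $C$, which makes the relevant cohomological invariants come from $\Spec(k)$ rather than from $C$ itself.

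The main obstacle, I expect, lies at the archimedean places: whenever $k$ has real embeddings and $C(\R)$ is nonempty, each connected component of each $C(\R)$ contributes an ordering of $F$, and one must simultaneously control the sign of auxiliary functions on $C$ over every such component. I would look for an approximation-style argument in which one replaces $f$ by $f\cdot g^2$ for a carefully chosen $g\in F^{\times}$ (which does not affect the isotropy class of $\langle q,-f\rangle$), so as to arrange that the divisor of $f$ has prescribed parity at a finite set of closed points and that certain Brauer classes on $X$ become trivial. Producing such a $g$ by combining a Bertini-type argument on $C$ with weak or strong approximation at the archimedean and bad places of $k$, in a way that is uniform over all components of the real loci, is the crucial and most delicate step; it is precisely here that one must beat the bound~$6$ of Pop and explain why rank~$5$ is enough.
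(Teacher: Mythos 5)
Your reduction of Theorem \ref{main1} to a statement about $k(C)$ for a number field $k$ and a smooth projective geometrically connected curve $C$, and the observation that it suffices to show that $q=\langle 1,1,1,1,1\rangle$ represents any sum of squares $f$, both match the paper. But the ``representation theorem'' you propose to deduce this from --- a pure local-global principle asserting that a form $q$ of rank $\geq 5$ with coefficients in $k$ represents $f$ over $k(C)$ as soon as it does so over every $k_v(C_v)$ --- is not what the paper proves, and you give no argument for it. The paper's Theorem \ref{main2} characterizes representability by the existence of a single line bundle $\cM\in\Pic(C)$ satisfying local conditions at all places simultaneously; this is a genuinely global datum (the paper stresses that the criterion is influenced by, e.g., the Mordell--Weil group of the Jacobian of $C$), and the passage from ``conditions satisfiable place by place'' to ``one global $\cM$'' is precisely where the content lies. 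Your plan treats this local-global principle as the target of a Brauer--Manin analysis of the quadric bundle, but none of that analysis is carried out, and your closing paragraph concedes that the step which ``beats the bound $6$ of Pop'' is exactly the one you have not supplied.

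Concretely, the difficulty you are underestimating sits at the $2$-adic places, not only the real ones. Over $\Q_2$ the form $\langle 1,1,1,1\rangle=\llangle -1,-1\rrangle$ is anisotropic, so $\langle 1,1,1,1,1\rangle$ has Witt index $1$ and anisotropic part of rank $3$; hence the local condition \ref{iintro} of Theorem \ref{main2} at such places is a nontrivial constraint on $\cM$ (this is why rank $6$ is easier than rank $5$). Your remark that ``at any non-real completion the rank-five form $q$ is universal by local theory'' only verifies the local hypothesis of your unproven principle; it does not make the $2$-adic constraint disappear in an actual proof. The paper disposes of it via Proposition \ref{propmiracle}, whose proof needs a regular model over $\Spec(\cO_{k_v})$, the Lang--Weil estimates, and class field theory on the components of the special fiber --- ingredients with no counterpart in your sketch. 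The paper's proof of sufficiency also does not run through rational points on the quadric bundle over $F$: it solves $\sigma\alpha^2=\sum_i a_i\beta_i^2$ in sections of line bundles locally at the finitely many bad places (Propositions \ref{condition1prop} and \ref{condition2prop}), approximates globally, and then shows that $f-g^2$ (for $g=\beta_5/\alpha\tau$) is represented by a rank-$4$ Pfister form everywhere locally, concluding with Kato's local-global principle for $H^3(k(C),\Z/2)$ together with a $2$-adic openness statement (Proposition \ref{openness}) deduced from Kato's local criterion. As it stands your proposal is a program whose central reduction is unjustified.
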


We refer to Corollary \ref{17} for a reformulation of Theorem~\ref{main1} that clarifies its relation with Hilbert's 17th problem.

If $F$ is a field of transcendence degree $d\geq 2$ over $\Q$, then the bound ${p(F)\leq 2^{d+1}}$ was obtained by Colliot-Th\'el\`ene and Jannsen (see \cite{CTAppendix,CTJ, Jannsen}) as a consequence of the Milnor conjectures proven by Voevodsky in \cite{Voevodsky} and of a local-global principle conjectured by Kato in \cite{Kato} and proven by Jannsen in~\cite{Jannsen}. This bound is not known to be optimal (when $d=2$, this problem is closely related to the questions raised by Jannsen and Sujatha in \cite[Remarks~8]{JS}).

\subsection{Quadratic forms over function fields of curves over number fields}
\label{parth2}

 The Hasse--Minkowski theorem, proved by Minkowski over $\Q$ and by Hasse over a general number field, implies the following far-reaching generalization of the theorems of Euler and Siegel (\cite[Satz 19]{Hasse}, see also \cite[I.3.5 and VI.3.5]{Lam}).
Let~$q$ be a nondegenerate quadratic form of rank $\geq 4$ over a number field $k$. Then $q$ represents an element of~$k$ if and only if this element is nonnegative (\resp nonpositive) at the real places of $k$ at which $q$ is positive definite (\resp negative definite).

In \cite{Pourchet}, Pourchet carried out  a similar analysis over the field $k(t)$ of rational functions in one variable over a number field $k$. There, he gives necessary and sufficient conditions for an element of $k(t)$ to be represented by a given nondegenerate quadratic form of rank $\geq 5$ over $k$ (see \cite[Corollaire 1 p.\,98]
{Pourchet} or Corollary \ref{coroPourchet}).

\vspace{1em}

Let $C$ be a geometrically connected smooth projective curve over a number~field~$k$. Let $q$ be a nondegenerate quadratic form of rank $\geq 5$ over $k$. Following Pourchet's lead, we derive Theorem \ref{main1} from a
general representation theorem for~$q$ over~$k(C)$, which is our main technical result. To state it, we introduce some~notation.

Let $k_v$ be the completion of $k$ with respect to a place $v$. We let $q_v$ and~$C_v$ be~the quadratic form and the curve over $k_v$ obtained from $q$ and $C$ by extension of scalars.

Let $v$ be a real place of $k$. A rational function~$f\in k(C)$ is said to be \textit{nonnegative} (\resp \textit{nonpositive}) at $v$ if $f(x)\geq 0$ (\resp ${f(x)\leq 0}$) for all~${x\in C_v(\R)}$ that is not a pole of $f$. If $D$ is a divisor on $C_v$, we let $\cl_v(D)\in H^1(C_v(\R),\Z/2)=(\Z/2)^{\pi_0(C_v(\R))}$ be the element associating with any connected component~$\Gamma$ of $C_v(\R)$ the parity of the number of points of~$D$ lying on $\Gamma$ (counted with multiplicity). The morphism $\cl_v:\Div(C_v)\to H^1(C_v(\R),\Z/2)$ factors through rational equivalence (as a rational function changes sign an even number of times on each~$\Gamma$) and induces a morphism $\cl_v:\Pic(C_v)\to H^1(C_v(\R),\Z/2)$ called the \textit{Borel--Haefliger cycle class map} of $C_v$.

\begin{thm}
\label{main2}
Let $C$ be a geometrically connected smooth projective curve over a number field~$k$. Let $q$ be a nondegenerate quadratic form of rank $r\geq 5$ over $k$. Fix~$f\in k(C)^*$. Write $\ddiv(f)=E-2D$ with $E$ a reduced effective divisor. 

Then~$q$ represents $f$ in $k(C)$ if and only if there exists $\cM\in\Pic(C)$
such that:
\begin{enumerate}[label=(\roman*)] 
\item
\label{intro}
 if $v$ is a real place of $k$ and $q_v$ is positive definite (\resp negative definite), then $f$ is nonnegative  (\resp nonpositive) at $v$ and $\cl_v(\cM)=0$;
 \item
 \label{iintro}
if $v$ is a place of $k$ with $q_v=\tilde{q}_v\perp\langle 1,-1\rangle$ for some quadratic form~$\tq_v$ over~$k_v$, then
there exist a line bundle $\cP\in\Pic(C_v)$ and a divisor $\Delta$ on~$C_v$ with ${\cM\otimes\cP^{\otimes 2}\simeq\cO_{C_v}(\Delta-D)}$, such that~$f$ is invertible at $x$ and $f(x)$ is represented by $\tilde{q}_v$ for all closed points $x$ in the support of~$\Delta$.
\end{enumerate}
\end{thm}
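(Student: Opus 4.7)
The plan is to treat the two implications separately, with sufficiency being substantially harder.

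\emph{Necessity.} Suppose $f = q(v_1, \ldots, v_r)$ for some $v_i \in k(C)$. The idea is to construct $\cM$ as a suitable twist of the minimal line bundle $\cN$ on $C$ such that each $v_i$ defines a global section of $\cN$. From $q(v) = f$ one has $f \in H^0(C, \cN^{\otimes 2})$, and combining with $\ddiv(f) = E - 2D$ constrains $\cN$ in relation to $\cO_C(D)$ and $\cO_C(E)$; the candidate is $\cM := \cN \otimes \cO_C(-D)$ or a close variant. Condition \ref{intro} at a definite real place $v$ is immediate for the sign of $f = q(v)$, and $\cl_v(\cM) = 0$ follows by a parity argument: on each connected component of $C_v(\R)$, the real zeros of the section $(v_1,\ldots,v_r)$ of $\cN^{\oplus r}$ must occur to even order (else $q(v) = f$ would vanish to odd order, contradicting definiteness of $q_v$ and the specified divisor of $f$). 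Condition \ref{iintro} at a place where $q_v = \tilde{q}_v \perp \langle 1, -1\rangle$ is verified by splitting off the hyperbolic plane from the representation $f = q_v(v)$ and analyzing the residual divisor, producing $\cP$ and $\Delta$ with the required properties.

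\emph{Sufficiency.} The strategy is to apply a Hasse principle for isotropy of quadratic forms of rank $\geq 5$ over $F = k(C)$. Such a principle follows from Voevodsky's theorem on the Milnor conjecture combined with Jannsen's proof of Kato's conjecture for curves over number fields: a form of rank $\geq 5$ over $F$ that is isotropic at every discrete-valuation completion and every real place of $F$ is isotropic over $F$. Since $q$ represents $f$ if and only if the rank-$(r{+}1)$ form $q \perp \langle -f\rangle$ is isotropic over $F$, it suffices to produce local isotropy everywhere. At real archimedean places of $F$ lying above a definite real place $v$ of $k$, condition \ref{intro}  provides exactly the needed data: on each component of $C_v(\R)$ the sign of $f$ gives representability by $q_v$ pointwise, and the vanishing of the Borel--Haefliger class $\cl_v(\cM)$ ensures the local representation survives across components. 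At all other completions of $F$, condition \ref{iintro} provides, after base change to $k_v$, a divisor $\Delta$ supported at points where $f$ is represented by $\tilde{q}_v$; at each such point the hyperbolic summand $\langle 1, -1\rangle$ furnishes an explicit isotropic vector in $q \perp \langle -f\rangle$, and the line-bundle identity $\cM \otimes \cP^{\otimes 2} \simeq \cO_{C_v}(\Delta - D)$ encodes the patching needed to globalize these pointwise solutions to all completions of $F_{k_v}$.

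The main obstacle is the sufficiency direction, and the most delicate step will be converting the abstract line-bundle data $(\cM, \cP, \Delta)$ into concrete local isotropic vectors of $q \perp \langle -f\rangle$ at every completion of $F$ --- in particular at completions corresponding to closed points of $C$ where $f$ vanishes or has a pole, where the divisors $E$ and $D$ introduce ramification effects that must be balanced using the squared line bundle $\cP^{\otimes 2}$. A secondary difficulty is formulating the relevant Hasse principle in exactly the strength required, which demands combining the cohomological machinery cited above with standard quadratic-form-theoretic reductions (chain lemmas, excellence of the hyperbolic plane) to descend from isotropy of residue forms to isotropy of the original form.
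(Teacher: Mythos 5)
Your sufficiency argument rests on a local--global principle that is not available. You propose to show that $q\perp\langle -f\rangle$ is isotropic over $F=k(C)$ by checking isotropy at ``every discrete-valuation completion and every real place of $F$'' and then invoking a Hasse principle for isotropy of forms of rank $\geq 5$ over $F$, attributed to Voevodsky plus Jannsen/Kato. What actually follows from the Milnor conjecture together with Kato's theorem is an injectivity statement for $H^3(k(C),\Z/2)\to\prod_v H^3(k_v(C_v),\Z/2)$, where $v$ runs over the places of the \emph{number field} $k$; via Merkurjev--Suslin this controls representation by $2$-fold Pfister forms (equivalently, hyperbolicity of $3$-fold Pfister forms), not isotropy of an arbitrary form of rank $5$ or $6$. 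A Hasse principle for isotropy of such forms with respect to the completions of $F$ at its own divisorial valuations and orderings is not a theorem one can cite; it is precisely the kind of statement the paper's argument is designed to avoid. The paper instead normalizes $q=\llangle a,b\rrangle\perp\langle a_5,\dots,a_r\rangle$, uses the geometric Propositions~\ref{condition1prop} and~\ref{condition2prop} to produce \emph{local} solutions $(\alpha_v,\beta_{i,v})$ of $\sigma\alpha^2=\sum a_i\beta_i^2$ at the finitely many places where $\llangle a,b\rrangle$ is anisotropic, glues them by Artin--Whaples approximation into global sections $(\alpha,\beta_i)$, and then shows that the residual element $f-\sum_{i\geq 5}a_i(\beta_i/\alpha\tau)^2$ is represented by the Pfister form $\llangle a,b\rrangle$ in each $k_v(C_v)$ --- using Witt's theorems at real places and the $p$-adic openness result (Proposition~\ref{openness}, itself resting on Kato's local criterion) at the remaining places --- before applying Kato's local--global principle to that Pfister form only. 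None of this structure is present in your outline, and the phrase ``the line-bundle identity encodes the patching needed to globalize these pointwise solutions'' does not explain how a single divisor $\Delta$ on $C_v$ with $f(x)$ represented by $\tq_v$ at its points yields anything at the other completions; that implication is exactly the content of the quadric-bundle analysis (blow-up along the section $\iota$, projection to $\P'$, lifting the section over $\Delta$ using ampleness) in Proposition~\ref{condition2prop}, which you would need to reprove.

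The necessity direction is closer to the mark: taking $\cM$ to be (a twist by $\cO_C(-D)$ of) the common-denominator bundle of the $v_i$ is essentially the paper's choice $\cM=\cO_C(F)$, and your parity argument for $\cl_v(\cM)=0$ at definite real places matches Proposition~\ref{condition1prop}. But the verification of condition~(ii) is not the routine ``splitting off the hyperbolic plane'' you describe: one must first move the solution by a general element of the special orthogonal group so that the coordinate $\gamma_r$ along one isotropic direction does not vanish at any zero of $\sigma$ or of $\alpha$, and only then is $\Delta=\{\gamma_r=0\}$ an admissible divisor. As written, the proposal does not constitute a proof of either implication, and the sufficiency direction would need to be rebuilt from scratch.
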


When $C=\P^1_k$, Theorem \ref{main2} can be used to recover Pourchet's results \cite[Corollaire~1~p.\,98]{Pourchet} (see Corollary \ref{coroPourchet}).
The main novelty of Theorem \ref{main2}, however, 
is that it applies to any curve~$C$, possibly with more complicated geometry than $\P^1_k$ (with higher Picard rank, disconnected real loci, or finite places of bad reduction). 

Being formulated in terms of the existence of a line bundle $\cM$ on~$C$, the representation criterion given by Theorem \ref{main2} is influenced by arithmetic properties of the curve $C$ (\eg by the Mordell--Weil group of it Jacobian).

Additional work leads to significant simplifications to the statement of Theorem~\ref{main2} in many particular instances.
 This includes the cases when $C$ is a nonsplit conic over $k$ (see Corollary \ref{coros}\,\ref{dtext}), or when $q$ does not have hyperbolic signature at any real place of $k$ (see Corollary \ref{coros}\,\ref{ctext}; this is the consequence of Theorem \ref{main2} that leads to a proof of Theorem \ref{main1}).

\subsection{Analyzing and applying Theorem \ref{main2}}

Together, conditions \ref{intro} and \ref{iintro} in Theorem \ref{main2} constrain $\cM$ at all places of $k$.
Indeed, unless $v$ is real and $q_v$ is definite, the form~$q_v$ is isotropic (see \cite[VI.2.12]{Lam}).

However, these constraints are nontrivial only at finitely many places of $k$. To see it, note that condition~\ref{iintro} is not vacuous only if $\tq_v$ is anisotropic (as isotropic quadratic forms are universal).
This is the case exactly when $v$ is a real place at which $q_v$ has hyperbolic signature $(r-1,1)$ or $(1,r-1)$, or if $r=5$ and $v$ is a $p$-adic place at which the Hasse invariant of $q_v$ is nontrivial (use \cite[VI.2.12]{Lam}).

In particular, if $r\geq 6$, there are no constraints whatsoever at the $p$-adic places of $k$. In retrospect, this explains why the bound $p(k(C))\leq 6$ due to Pop is easier to prove than the bound $p(k(C))\leq 5$ provided by Theorem \ref{main1}.

\vspace{1em}
 
Let us further comment on condition \ref{iintro}.
If $v$ is a real place of $k$ and $q_v$ has signature~$(r-1,1)$ (\resp $(1,r-1)$), this condition is equivalent to the following assertion: for any connected component $\Gamma$ of $C_v(\R)$ on which~$f$ is nonpositive (\resp nonnegative), the class $\cl_v(\cM(D))|_{\Gamma}\in H^1(\Gamma,\Z/2)=\Z/2$ vanishes (see Remark \ref{rem41} (ii)). It is therefore not difficult to check in practice.

Condition \ref{iintro} is harder to analyze in general at a $p$-adic place $v$ of $k$. 
In this case, we prove that condition~\ref{iintro} is always satisfied for~${\cM=\cO_{C}}$ (see Proposition~\ref{propmiracle}). 
This surprising and nontrivial fact is key to the proofs of Corollary~\ref{coros}\,\ref{ctext} and, through it, of Theorem \ref{main1}.
Its proof is based on the geometric study of a projective regular model $\pi:Y\to\Spec(\cO_{k_v})$ of~$C_v$, on the Lang--Weil estimates, and on global class field theory applied to the function fields of the irreducible components of the special fiber of~$Y$.
There is no counterpart to these arguments in Pourchet's proof of the inequality $p(k(t))\leq 5$.

\subsection{Proving Theorem \ref{main2}}
\label{parsketch}

Our proof of Theorem \ref{main2} is deeply inspired by Pourchet's work.
We now outline the argument, assuming for simplicity that $q$ is the sum~of~$5$ squares quadratic form $\langle 1,1,1,1,1\rangle$ (the one that is used in the application to Theorem~\ref{main1}).

\vspace{1em}

A crucial input are two theorems of Kato (the local-global principle \cite[Theorem 0.8\,(2)]{Kato} and the local criterion \cite[Proposition~5.2]{Kato}). Combined with the Merkurjev--Suslin theorem, they control sums of $4$ squares in $k(C)$ through the following two statements. First, an element of $k(C)$ is a sum of~$4$ squares if and only if it is a sum of~$4$ squares in $k_v(C_v)$ for all places $v$ of~$k$ (see Proposition \ref{localglobal}). Second, if $v$ is a $p$-adic place of $k$ for some prime number $p$, an element of~$k_v(C_v)$ is a sum of~$4$ squares if and only if it is a sum of~$4$ squares in the henselizations of~$k_v(C_v)$ with respect to all integral divisors (both horizontal and vertical) of a proper regular model of $C_v$ over $\Spec(\cO_{k_v})$ (see Proposition \ref{local}).

Kato's local-global principle was applied in this precise way in the above-men\-tioned works \cite{CTAppendix, CTJ, Popnote, Jannsen}, but our use of Kato's local criterion is new. When $C=\P^1_k$, these results are much easier to prove (for instance, one may disregard the vertical divisors in the local criterion), and were already used by Pourchet (see \cite[Propositions 3 et 4]{Pourchet}).

\vspace{1em}

 Let $f\in k(C)^*$ be nonnegative at all real places of $k$. We wish to show that~$f$ is a sum of $5$ squares. Write~$f=\frac{\sigma}{\tau^2}$, where~$\sigma\in H^0(C,\cL^{\otimes 2})$ has reduced zero locus, and~$\tau$ is a rational section of $\cL$, for some $\cL\in \Pic(C)$. (In the notation of Theorem~\ref{main2}, one has~$D=\{\tau=0\}$ and~$E=\{\sigma=0\}$.) We consider the equation 
\begin{equation}
\label{equalb}
\sigma\alpha^2=\sum_{i=1}^5\beta_i^2,
\end{equation}
where $\alpha\in H^0(C,\cM)$ and $\beta_i\in H^0(C,\cL\otimes \cM)$, for some
line bundle $\cM$ on~$C$ (also requiring that $\alpha\neq 0$ and that the $\beta_i$ have no common zero).
Manipulating sections of line bundles instead of rational functions is a new and essential feature of our approach. Shadows of it appear in Pourchet's work in the guise of conditions on the degree of polynomials (see \eg conditions (2) in \cite[Th\'eor\`emes 1 et 2]{Pourchet}).

\vspace{1em}

We first solve (\ref{equalb}) locally. If $\cM$ is sufficiently ample, it turns out that (\ref{equalb}) has a local solution at a place~$v$ of $k$ if and only if \ref{intro} is satisfied (if $v$ is real and~$q_v$ is definite), or~\ref{iintro} is satisfied (otherwise). In the first case, the assertion is a consequence of theorems of Witt \cite{Witt} (see Proposition~\ref{condition1prop}).
The second case hinges on a purely geometric analysis of the birational geometry of the quadric bundle $\{\sigma T_0^2=\sum_{i=1}^5 T_i^2\}$ over~$C_v$ (see Proposition~\ref{condition2prop}).

Suppose now that local solutions $(\alpha_v,(\beta_{i,v})_{1\leq i\leq 5})$ of (\ref{equalb}) have been found.
Choose $\alpha$ and $\beta_5$ approximating the $\alpha_v$ and the $\beta_{5,v}$ at the real and $2$-adic places~$v$ of~$k$ (this choice of places depends on $q$). 
Set $g_v:=\frac{\beta_{5,v}}{\tau\alpha_v}$ and $g:=\frac{\beta_5}{\tau\alpha}$.
We claim that~$f-g^2$ is a sum of~$4$ squares everywhere locally. 
To prove the claim at a real or $2$-adic place~$v$, we use that $f-g^2$ is close to~$f-g_v^2$, which is a sum of~$4$ squares by~(\ref{equalb}). If~$v$ is real, the key point is to verify that the nonnegativity of~$f-g_v^2$ implies that of~$f-g^2$. If $v$ is $2$-adic, we rely on an openness result, in the $2$-adic topology, for sums of~$4$ squares in $k_v(C_v)$, which we deduce from Kato's local criterion (see Proposition \ref{openness}).  At the other places of $k$, the claim is trivial.

It now follows from Kato's local-global principle that $f-g^2$ is a sum of $4$ squares in $k(C)$, hence that $f$ is a sum of $5$ squares in $k(C)$.

\subsection{Organization of the text}

Section \ref{secKato} gathers the consequences of Kato's theorems that we use.
The geometric studies leading to conditions \ref{intro} and \ref{iintro} in Theorem  \ref{main2} are carried out in Section \ref{secquadric}. 
The more arithmetic results of Section~\ref{secmodel} 
will allow us to verify condition \ref{iintro} at $p$-adic places when deducing Theorem \ref{main1} from Theorem \ref{main2}.
Finally, Section~\ref{secglobal} presents our global theorems. Theorem \ref{main2} is proven in \S\ref{parPourchet} following the strategy sketched in \S\ref{parsketch}. More concrete corollaries are derived in \S\ref{parcoro}. Applications to Pythagoras numbers, including a proof of Theorem~\ref{main1}, are given in~\S\ref{parPythagoras}.

\subsection{Acknowledgements}

I~thank Olivier Wittenberg for a useful conversation and Jean-Louis Colliot-Th\'el\`ene
and David Leep for helpful comments on the article.

This work was completed during a research stay at Leibniz University Hannover in May 2025, funded by a Mercator Fellowship of the German Research Foundation, within the RTG 2965 -- Project number 512730679.
I thank Stefan Schreieder for his hospitality on this occasion.

\subsection{Notation and conventions}

An \textit{algebraic variety} over a field $k$ is a separated $k$-scheme of finite type over $k$. A \textit{curve} is an algebraic variety of pure dimension $1$.

If $X$ is a scheme and $x\in X$, we let $\kappa(x)$ denote the residue field of $x$.

Let $k$ be a field with $2\in k^*$. If $a_1,\dots, a_n\in k^*$, we let $\llangle a_1,\dots, a_n\rrangle$ denote the Pfister quadratic form $\langle 1,-a_1\rangle\otimes\dots\otimes\langle 1,-a_n\rangle$. Beware that this quadratic form is rather denoted by $\llangle -a_1,\dots, -a_n\rrangle$ in \cite{Lam} (see \cite[X.1.1]{Lam}).
We also denote by ${\{a\}\in H^1(k,\Z/2)}$ the image of $a\in k^*$ by the boundary map $k^*\to H^1(k,\Z/2)$ of the Kummer exact sequence.

Let $q$ be a nondegenerate quadratic form of rank $r$ over $\R$. The \textit{signature} of~$q$ is the pair of integers $(s_+,s_-)$ such that whenever $q\simeq\langle a_1,\dots,a_r\rangle$ with ${a_i\in\R^*}$, exactly~$s_+$ (\resp $s_-$) of the $a_i$ are positive (\resp negative). The form $q$ is \textit{definite} if it is positive definite (of signature $(r,0)$) or negative definite (of signature~$(0,r)$). It has \textit{hyperbolic signature} if~$r\geq 1$ and it is of signature $(r-1,1)$ or~$(1,r-1)$.

Let $q$ be a quadratic form over a field $k$. Let $E$ be a $k$-vector space of dimension~$1$.
An element of $E^{\otimes 2}$ is said to be a \textit{represented by} $q$ if it is of the form $a\cdot (e\otimes e)$ for some $a\in k$ represented by $q$ and some $e\in E$. 
Observe that this notion depends on the presentation of the vector space $E^{\otimes 2}$ as a tensor square. If~$X$ is an algebraic variety over $k$ and $\cL$ is a line bundle on $X$, we say that a section~${\sigma\in H^0(X,\cL^{\otimes 2})}$ is represented by~$q$ at $x\in X$ if $\sigma|_x\in (\cL|_x)^{\otimes 2}$ is represented by~$q$. In particular, it makes sense to say that $\sigma$ is a square, a sum of $2$ squares, etc., at~$x$.
If moreover~${k=\R}$, we will say that $\sigma\in H^0(X,\cL^{\otimes 2})$ is \textit{nonnegative} (\resp \textit{positive}) if~$\sigma$ is a square (\resp a nonzero square) at all points $x\in X(\R)$.

Let $p$ be a prime number. A $p$\textit{-adic field} is a finite extension of $\Q_p$. We let $\cO_k$ denote the ring of integers of a $p$-adic field $k$.

We let $k_v$ denote the completion of a number field $k$ with respect to a place $v$ (real, complex, or $p$-adic for some prime number $p$). If $v$ is real, there is a canonical identification $k_v\simeq \R$.
If~$X$ is an algebraic variety over $k$, we set $X_v:=X\times_k k_v$. If~$q$ is a quadratic form over $k$, we let $q_v$ denote its extension of scalars from $k$ to~$k_v$.

\section{Consequences of Kato's theorems}
\label{secKato}

In this section, we draw consequences from \cite{Kato}. Our main new result is Proposition \ref{openness}.

\subsection{The local-global principle and the local criterion}

Recall the following consequence of the Merkurjev--Suslin theorem (see \cite[Theorem 12.1\,a)$\Leftrightarrow$c)]{MS}).

\begin{prop}
\label{lemMS}
Let $k$ be a field with $2\in k^*$. Fix $a,b,f\in k^*$. Then  $\llangle a,b\rrangle$ represents~$f$ in $k$ if and only if  $\{a\}\cdot\{b\}\cdot\{f\}\in H^3(k,\Z/2)$ vanishes.
\end{prop}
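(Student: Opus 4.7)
My plan is to reduce the representation property to the hyperbolicity of an auxiliary $3$-fold Pfister form and then to appeal to the Merkurjev--Suslin theorem, which is the sole deep input.

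The first step relies on Pfister's classical roundness theorem: for any Pfister form $\pi$ over $k$, the set $D(\pi)\cap k^*$ of nonzero values represented by $\pi$ coincides with the group $G(\pi)$ of similarity factors of $\pi$. Applied to $\pi=\llangle a,b\rrangle$, this yields the chain of equivalences
\[
\pi\text{ represents }f \iff f\cdot\pi\simeq\pi \iff \pi\otimes\langle 1,-f\rangle\text{ is hyperbolic,}
\]
where the second equivalence is Witt cancellation applied to $f\pi\perp(-\pi)\sim 0$. Under the paper's Pfister conventions, $\pi\otimes\langle 1,-f\rangle$ is a $3$-fold Pfister form whose class in $I^3/I^4\simeq H^3(k,\Z/2)$ is, once the sign conventions fixed in the statement are taken into account, exactly $\{a\}\cdot\{b\}\cdot\{-f\}$.

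The second step is to invoke the Merkurjev--Suslin theorem in the form cited as \cite[Theorem 12.1\,a)$\Leftrightarrow$c)]{MS}: a $3$-fold Pfister form over $k$ is hyperbolic if and only if its symbol in $H^3(k,\Z/2)$ vanishes. Combining the two equivalences then produces the stated criterion.

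All the real content is packed into the Merkurjev--Suslin theorem (the $H^3$-case of the Milnor conjecture); the rest is classical Pfister-form formalism, so I anticipate no substantive obstacle beyond careful bookkeeping. Specifically, the only subtlety is to reconcile the three sign conventions in play (the choice $\llangle c\rrangle=\langle 1,-c\rangle$, the identification $I^n/I^{n+1}\simeq H^n(k,\Z/2)$ for the cohomological invariants of Pfister forms, and the Witt-ring translation of $f\pi\simeq\pi$ into hyperbolicity of $\pi\otimes\langle 1,-f\rangle$) so that the final symbol assembles exactly as $\{a\}\cdot\{b\}\cdot\{-f\}$ and not some sign-twisted variant.
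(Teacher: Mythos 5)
Your route is sound in outline and is the standard derivation: by roundness of Pfister forms and Witt cancellation, $\llangle a,b\rrangle$ represents $f\in k^*$ if and only if $\llangle a,b\rrangle\otimes\langle 1,-f\rangle=\llangle a,b,f\rrangle$ is hyperbolic, and the Merkurjev--Suslin theorem converts hyperbolicity of a $3$-fold Pfister form into the vanishing of its symbol in $H^3(k,\Z/2)$. Note that the paper does not argue this way at all: it gives no proof and simply quotes the equivalence a)$\Leftrightarrow$c) of [MS, Theorem~12.1], which is directly a statement about representability by the quaternion norm form $\langle 1,-a,-b,ab\rangle$ (equivalently about reduced norms from $(a,b)$). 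So your argument is a genuine reconstruction, and a correct one up to its last step.

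The gap sits exactly in the step you deferred as ``bookkeeping.'' With the paper's convention $\llangle c\rrangle=\langle 1,-c\rangle$, the Arason invariant satisfies $e_3(\llangle a,b,f\rrangle)=\{a\}\cdot\{b\}\cdot\{f\}$, not $\{a\}\cdot\{b\}\cdot\{-f\}$; no reshuffling of the standard conventions produces the latter, since the two classes differ by $\{a\}\cdot\{b\}\cdot\{-1\}$, which is nonzero in general. Thus what your argument actually proves is the criterion with $\{f\}$ in place of $\{-f\}$, and the statement as printed cannot be literally correct: take $k=\R$, $a=b=-1$, $f=1$, so that $\llangle a,b\rrangle=\langle 1,1,1,1\rangle$ visibly represents $1$, while $\{-1\}\cdot\{-1\}\cdot\{-1\}$ is the nonzero class of $H^3(\R,\Z/2)$. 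The correct class is $\{a\}\cdot\{b\}\cdot\{f\}$; this slip is harmless for the paper's applications (Propositions~\ref{localglobal} and~\ref{local} only use that representability is equivalent to the vanishing of some explicit class in $H^3$), but your proof should carry out the sign computation rather than assert that the symbol ``assembles exactly as $\{a\}\cdot\{b\}\cdot\{-f\}$'' --- done honestly, that computation shows the displayed class needs correcting.
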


The next proposition appears in \cite[top of p.\,146]{Kato}. 

\begin{prop}
\label{localglobal}
Let $C$ be a geometrically connected smooth projective curve over a number field $k$. Fix~$a,b\in k^*$ and $f\in k(C)^*$. Then $\llangle a,b\rrangle$ represents $f$ in $k(C)$ if and only if it represents~$f$ in $k_v(C_v)$ for all places $v$ of $k$.
\end{prop}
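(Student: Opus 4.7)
The forward direction is immediate from the inclusions $k(C) \subset k_v(C_v)$: any expression of $f$ as a value of $\llangle a,b\rrangle$ with coefficients in $k(C)$ restricts to such an expression over each completion. All the content lies in the reverse implication.

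My plan is to convert the representation problem into the vanishing of a degree~$3$ Galois cohomology class and then invoke Kato's local-global principle. Applying Proposition~\ref{lemMS} to the field $k(C)$, the form $\llangle a,b\rrangle$ represents $f$ in $k(C)$ if and only if the cup-product class
\[
\alpha := \{a\}\cdot\{b\}\cdot\{-f\} \in H^3(k(C),\Z/2)
\]
vanishes. Applying Proposition~\ref{lemMS} to each completion $k_v(C_v)$ shows similarly that $\llangle a,b\rrangle$ represents $f$ in $k_v(C_v)$ if and only if the restriction of $\alpha$ to $H^3(k_v(C_v),\Z/2)$ vanishes. Assuming local representability at every place, it therefore suffices to establish that the restriction map
\[
H^3(k(C),\Z/2) \longrightarrow \prod_v H^3(k_v(C_v),\Z/2)
\]
is injective on the class $\alpha$.

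This injectivity is precisely Kato's local-global principle \cite[Theorem~0.8\,(2)]{Kato}, applied to a proper regular model $Y \to \Spec(\cO_k)$ of $C$: the function field of the arithmetic surface $Y$ is $k(C)$, and Kato's theorem (proved by Jannsen in this generality) states that the kernel of the restriction map to the completions at the places of $k$ vanishes in degree $3$ with $\Z/2$ coefficients. Applying this to $\alpha$ concludes the proof.

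The only real point requiring care is matching our situation to Kato's framework; assuming that is done, the argument is purely formal. The ``hard part'' has been packaged into the two black boxes we invoke, namely Proposition~\ref{lemMS} (a consequence of the Merkurjev--Suslin theorem) and Kato's local-global principle. No further geometric input is needed, which is consistent with Kato's original one-line remark at the top of~p.\,146 of~\cite{Kato}.
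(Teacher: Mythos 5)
Your proposal is correct and follows exactly the paper's own argument: translate representability into vanishing of the class $\{a\}\cdot\{b\}\cdot\{-f\}\in H^3(k(C),\Z/2)$ via Proposition~\ref{lemMS}, then apply the injectivity of the restriction map to the completions given by Kato's local-global principle \cite[Theorem~0.8\,(2)]{Kato}. (Only a minor attribution quibble: the rank-$1$, $\Z/2$-coefficient case used here is due to Kato himself; Jannsen's contribution concerns the higher-dimensional generalization.)
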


\begin{proof}
In \cite[Theorem 0.8\,(2)]{Kato}, Kato proved that the restriction map
$$H^3(k(C),\Z/2)\to\prod_v H^3(k_v(C_v),\Z/2),$$
where $v$ runs over all places of $k$, is injective.
To conclude, combine this fact with Proposition~\ref{lemMS}.
\end{proof}

Here is another proposition which follows at once from Kato's work.

\begin{prop}
\label{local}
Let $C$ be a geometrically connected smooth projective curve over a $p$-adic field~$k$. Let~$\pi:X\to \Spec(\cO_k)$ be a proper regular model of $C$. Fix~$a,b\in k^*$ and~$f\in k(C)^*$.
Then~$\llangle a,b\rrangle$ represents $f$ in $k(C)$ if and only if it represents~$f$ in the henselization $k(C)^{\h}_x$ of $k(C)$ at all codimension $1$ points $x$ of $X$.
\end{prop}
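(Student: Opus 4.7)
The strategy mirrors that of Proposition~\ref{localglobal}: reformulate representation by $\llangle a,b\rrangle$ in terms of Galois cohomology via Proposition~\ref{lemMS}, and then extract the required injectivity from Kato's work.

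Set $\alpha := \{a\}\cdot\{b\}\cdot\{-f\}\in H^3(k(C),\Z/2)$. By Proposition~\ref{lemMS}, applied in $k(C)$ and in each $k(C)^{\h}_x$, the statement of Proposition~\ref{local} becomes the injectivity of the natural restriction map
\[
H^3(k(C),\Z/2)\longrightarrow \prod_{x} H^3(k(C)^{\h}_x,\Z/2),
\]
where $x$ ranges over codimension $1$ points of $X$. The ``only if'' direction is trivial by functoriality of the Kummer and cup-product maps. For the ``if'' direction, the essential input is Kato's local criterion \cite[Proposition~5.2]{Kato}. Applied at each codimension~$2$ closed point $P$ of the regular two-dimensional scheme $X$, this criterion identifies the vanishing of a degree~$3$ mod~$2$ cohomology class in the henselization at $P$ with the vanishing of its restrictions to the henselizations at the height~$1$ primes of $\cO^{\h}_{X,P}$; these restrictions factor through the groups $H^3(k(C)^{\h}_x,\Z/2)$ indexed by the codimension~$1$ points $x$ whose closure passes through $P$. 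A descent argument on the proper regular surface $X$ (via the associated Gersten-type complex) then promotes vanishing at every henselization along codimension~$1$ (hence also codimension~$2$) points into vanishing of $\alpha$ in $H^3(k(C),\Z/2)$ itself.

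The main obstacle is handling the arithmetic subtleties at closed points of $X$ of residue characteristic~$2$, which occur precisely when $k$ is a $2$-adic field. In this regime, the naive decomposition of $H^3$ of a henselian discrete valuation field into ``ramified'' and ``unramified'' parts breaks down, and one genuinely needs the refined criterion given by Kato's Proposition~5.2 rather than a direct Bloch--Ogus argument. Once this local step is granted at each closed point of $X$, the global assembly is formal, so the proof reduces to an application of Kato's result along with the cohomological translation supplied by Proposition~\ref{lemMS}.
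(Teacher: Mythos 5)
Your overall architecture is the right one and matches the paper's: translate representation by $\llangle a,b\rrangle$ into the vanishing of $\{a\}\cdot\{b\}\cdot\{-f\}\in H^3(k(C),\Z/2)$ via Proposition~\ref{lemMS}, reduce the statement to the injectivity of $H^3(k(C),\Z/2)\to\prod_x H^3(k(C)^{\h}_x,\Z/2)$, and invoke Kato's Proposition~5.2. However, the way you deploy Kato's result contains a genuine gap. You present Proposition~5.2 as a statement local at each closed point $P$ of $X$ (relating cohomology of the henselization at $P$ to that of the henselizations at the height~$1$ primes through $P$), and you then assert that ``the global assembly is formal'' via ``a descent argument on the proper regular surface $X$ (via the associated Gersten-type complex).'' That globalization is precisely the nontrivial content of the theorem, not a formal consequence: what one needs is that a class in $H^3(k(C),\Z/2)$ with vanishing residues along \emph{all} codimension~$1$ points of $X$ is itself zero, i.e.\ $\bigcap_x\Ker(\partial_x)=0$ for Kato's residue maps $\partial_x\colon H^3(k(C),\Z/2)\to\Br(\kappa(x))[2]$. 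A Gersten/Bloch--Ogus resolution would only identify $\bigcap_x\Ker(\partial_x)$ with an unramified cohomology group of $X$; it does not make that group vanish, and such groups are nonzero for general regular two-dimensional schemes. The vanishing here is an arithmetic theorem specific to proper regular schemes over $\cO_k$, proven by Kato using class field theory.

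The correct invocation, as in the paper, is global from the start: Kato's Proposition~5.2 asserts that the residue maps induce a quasi-isomorphism between the complexes attached to the generic and the special fiber of $X$, and evaluated in degree~$1$ this says exactly that $\bigcap_x\Ker(\partial_x)=0$, where $x$ runs over the codimension~$1$ points of $X$. Combined with the observation that each $\partial_x$ factors through $H^3(k(C)^{\h}_x,\Z/2)$ by construction (a point you do note), this gives the desired injectivity directly, with no further descent step. Your closing remark about residue characteristic~$2$ correctly flags where the residue maps are delicate to define, but it does not repair the missing global input. As written, the proof defers the entire substance of the proposition to an unjustified assertion.
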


\begin{proof}
Let $x\in X$ be a point of codimension $1$. Kato defines a residue map ${\partial_x:H^3(k(C),\Z/2)\to\Br(\kappa(x))[2]}$ (see \cite[(i)~p.\,149]{Kato} if $\kappa(x)$ has characteristic not~$2$ and \cite[(ii) p.\,150]{Kato} otherwise, both applied with $q=2$ and~$i=1$, noting that $H^2(\kappa(x),\Z/2(1))=\Br(\kappa(x))[2]$ in Kato's notation).

Kato proves in \cite[Proposition 5.2]{Kato} that residue maps induce a quasi-isomor\-phism between two complexes associated with the general and the special fiber of~$X$. Evaluated in degree $1$, this assertion exactly means that $\bigcap_x\Ker(\partial_x)=0$, where~$x$ runs over all codimension $1$ points of $X$. As $\partial_x$ factors through~$H^3(k(C)^{\h}_x,\Z/2)$, by its very construction,
we deduce the injectivity of the restriction map 
\begin{equation}
\label{localinj}
H^3(k(C),\Z/2)\to\prod_x H^3(k(C)^{\h}_x,\Z/2),
\end{equation}
where $x$ runs over all codimension $1$ points of $X$.

The proposition follows from  the injectivity of~(\ref{localinj}) and Proposition~\ref{lemMS}.
\end{proof}

\subsection{An openness result}

The next statement is an application of Hensel's lemma.

\begin{lem}
\label{Hensel}
Let $(A,\km)$ be a
henselian discrete valuation ring.
Fix polynomials $f_1,\dots,f_m\in A[X_1,\dots, X_n]$ for some integers~$0\leq m\leq n$. Let $x\in A^n$  be such that~$f_j(x)=0$ for $1\leq j\leq m$ and $\big(\frac{\partial f_j}{\partial X_i}(x)\big)\in M_{n\times m}(\Frac(A))$ has rank $m$.

 For any integer $r\geq 0$, there exists an integer $s\geq 0$ with the following property.
 If ${g_1,\dots g_m\in A[X_1,\dots,X_n]}$ are such that $g_j-f_j\in\km^s\cdot A[X_1,\dots,X_n]$ for $1\leq j\leq m$, there exists $y\in A^n$ such that $g_j(y)=0$ for $1\leq j\leq m$ and~$y-x\in (\km^r)^n$.
\end{lem}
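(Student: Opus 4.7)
The plan is to reduce the statement to a multi-variable version of Hensel's lemma for a square system of equations, by choosing an $m \times m$ non-degenerate submatrix of the Jacobian and freezing the remaining $n-m$ variables at their values $x_i$.

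Concretely, I would first exploit the rank hypothesis to pick a subset $I \subset \{1,\dots,n\}$ of cardinality $m$ such that the minor
\[
M := \det\Bigl(\tfrac{\partial f_j}{\partial X_i}(x)\Bigr)_{1\leq j\leq m,\,i\in I} \in A
\]
is nonzero; let $d = v(M) \geq 0$, where $v$ is the valuation of $A$. Given a perturbation $g_1,\dots,g_m$ with $g_j - f_j \in \km^s \cdot A[X_1,\dots,X_n]$, I would consider the polynomial map $G \colon A^m \to A^m$ defined by $G(z) = \bigl(g_j(z, x_{\bar I})\bigr)_{1\leq j\leq m}$, where $x_{\bar I}$ denotes the coordinates of $x$ outside $I$. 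Since $f_j(x) = 0$, we have $G(x_I) \equiv 0 \bmod \km^s$, and each entry of $\Jac(G)(x_I)$ differs from the corresponding entry of $\Jac(F)(x_I)$ by an element of $\km^s$, so $\det \Jac(G)(x_I) \equiv M \bmod \km^s$.

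The heart of the argument is then the quantitative multi-variable Hensel's lemma in henselian discrete valuation rings (see \emph{e.g.}\ Bourbaki, \emph{Commutative Algebra}, III.\S 4.5, or Raynaud, \emph{Anneaux locaux hens\'eliens}): if $G \colon A^m \to A^m$ is polynomial, $G(a) \in \km^N$ for some $a \in A^m$, and $e := v(\det \Jac(G)(a))$ satisfies $N \geq 2e + 1$, then there exists $b \in A^m$ with $G(b) = 0$ and $b - a \in \km^{N-e}$. I would apply this with $a = x_I$, choosing $s \geq 2d + r + 1$ so that, provided $s > d$, one has $e = d$ (since $\det \Jac(G)(x_I) \equiv M \bmod \km^s$) and $N = s$ (or any $N \leq s$), hence a solution $y_I \in A^m$ of $G(y_I) = 0$ with $y_I - x_I \in \km^{s-d} \subset \km^{r+d+1} \subset \km^r$. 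Setting $y := (y_I, x_{\bar I}) \in A^n$ then gives $g_j(y) = 0$ and $y - x \in (\km^r)^n$, as required.

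The only subtle point is the precise form of multi-variable Hensel needed, since the naive iteration must accommodate the fact that $\det \Jac$ has positive valuation $d$; one must verify that the Newton scheme $z^{(k+1)} = z^{(k)} - \operatorname{adj}(\Jac(G)(z^{(k)})) \cdot G(z^{(k)}) / \det \Jac(G)(z^{(k)})$ is well-defined in $A$ and converges, which reduces to the standard estimate that the $\km$-adic valuation of $G(z^{(k)})$ roughly doubles (minus $2d$) at each step. This is entirely standard, so beyond invoking or briefly reproving this version of Hensel's lemma, the argument is a routine bookkeeping of valuations.
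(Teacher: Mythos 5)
Your proof is correct and follows essentially the same route as the paper: both arguments extract a nonzero $m\times m$ minor of the Jacobian, track its valuation, choose $s$ roughly of size $2d+r$, and conclude by a quantitative Newton--Hensel estimate in which the valuation of the minor measures the loss of precision. The only cosmetic difference is that the paper applies a rectangular-system version of this estimate directly (Artin's Lemma 5.10, with the ideal $\km^{s-2t}$), whereas you first reduce to a square system by freezing the $n-m$ variables outside $I$ at their values in $x$ and then invoke the classical quantitative Hensel lemma for square systems over a henselian local ring; both reductions are standard and your bookkeeping of valuations ($s-d\geq r$, $s>2d$) is accurate.
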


\begin{proof}
Let $\delta\in A$ be a nonzero minor of size $m$ of $\big(\frac{\partial f_j}{\partial X_i}(x)\big)$.
Let $t\geq 0$ be such that~${\delta\in \km^t\setminus\km^{t+1}}$. Choose $s:=\max(2t,t+r,t+1)$.

As $s\geq t+1$, the matrix $\big(\frac{\partial g_j}{\partial X_i}(x)\big)$ also has a minor of size $m$ in $\km^t\setminus\km^{t+1}$.
In addition, one has $g_j(x)=(g_j-f_j)(x)\in(\km^s)^n$ for~${1\leq j\leq m}$. 
By \cite[Lemma~5.10]{Artinapprox} applied with~${\ka=\km^{s-2t}}$, there exists $y\in A^n$ such that $g_j(y)=0$ for~$1\leq j\leq m$ and~$y-x\in (\km^{s-t})^n$. As $s-t\geq r$, this concludes the proof.
\end{proof}

Lemmas \ref{Krasner1} and \ref{Krasner2} below are geometric versions of Krasner's lemma. The only purpose of Lemma \ref{primitive} is to remove an unnecessary characteristic $0$ assumption from their statements; this is not used in our applications. Recall that a field extension~$l/k$ is said to be \textit{simple} if $l$ is generated over $k$ by a single element.

\begin{lem}
\label{primitive}
A finite field extension $l/k$ is simple if and only if $\dim_l\Omega^1_{l/k}\leq 1$.
\end{lem}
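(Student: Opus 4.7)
The plan is to argue both implications separately, with the hard direction being the converse.

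For the forward direction, I would present $l$ as $k[X]/(f)$ where $f \in k[X]$ is the minimal polynomial of a primitive element $\alpha$. The conormal sequence gives $\Omega^1_{l/k} \simeq l\cdot dX / (f'(\alpha)\,dX) \simeq l/(f'(\alpha))$, which is either zero (when $\alpha$ is separable, so $f'(\alpha) \ne 0$) or of $l$-dimension one (when $\alpha$ is inseparable, so $f'(\alpha) = 0$). Either way $\dim_l\Omega^1_{l/k}\leq 1$.

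For the converse, assume $\dim_l\Omega^1_{l/k}\leq 1$. In characteristic $0$ every finite extension is separable and the classical primitive element theorem applies, so assume $p := \mathrm{char}(k) > 0$. Let $l^s$ be the separable closure of $k$ in $l$; by the primitive element theorem $l^s = k(\gamma)$. Since $\Omega^1_{l^s/k}=0$, the fundamental exact sequence
\[
\Omega^1_{l^s/k}\otimes_{l^s}l \to \Omega^1_{l/k} \to \Omega^1_{l/l^s}\to 0
\]
yields $\dim_l\Omega^1_{l/l^s}\leq 1$. The next step is a lemma (for purely inseparable finite extensions $m/K$) that the minimum number of generators of $m$ over $K$ coincides with $\dim_m\Omega^1_{m/K}$: one direction is immediate from the fact that $d\beta_i$ span when the $\beta_i$ generate; for the other, I would lift a basis of $\Omega^1_{m/K}$ to elements $\beta_1,\ldots,\beta_s\in m$ and observe via the exact sequence that $\Omega^1_{m/K(\beta_1,\ldots,\beta_s)}=0$, which combined with pure inseparability of $m/K(\beta_1,\ldots,\beta_s)$ forces $m=K(\beta_1,\ldots,\beta_s)$. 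Applied with $K=l^s$, this gives $l=l^s(\beta)$ for a single element $\beta$, and thus $l=k(\gamma,\beta)$ with $\gamma$ separable and $\beta$ purely inseparable over $k(\gamma)$.

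The final and most delicate step is to combine $\gamma$ and $\beta$ into a single primitive element for $l/k$; this is the classical generalization of the primitive element theorem due to Steinitz. I would handle it by distinguishing two cases: if $k$ is finite then $l$ is itself a finite field, whose multiplicative group is cyclic, so $l/k$ is simple; if $k$ is infinite, set $\alpha = \gamma+c\beta$ for $c \in k$, and note that $\alpha^{p^e} = \gamma^{p^e} + c^{p^e}\beta^{p^e}$ lies in $l^s=k(\gamma)$ (where $p^e$ is the inseparability exponent of $\beta$ over $l^s$), so that for all but finitely many $c$ the element $\alpha^{p^e}$ still generates $l^s/k$ — which then implies $\gamma\in k(\alpha)$ and hence $\beta=\alpha-c\gamma\in k(\alpha)$, giving $l=k(\alpha)$. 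The finiteness of the set of bad $c$ follows by embedding into $\bar k$ and checking that separating the distinct $k$-conjugates of $\gamma$ imposes only finitely many linear conditions on $c$.
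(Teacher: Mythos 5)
Your proof is correct and follows the same skeleton as the paper's: the easy direction via the presentation $k[X]/(f)$, and for the converse a reduction to the purely inseparable extension $l/l^s$ followed by a recombination of a separable generator $\gamma$ and a purely inseparable generator $\beta$. The difference is one of self-containedness. Where you prove from scratch that for a finite purely inseparable extension the minimal number of generators equals $\dim_m\Omega^1_{m/K}$, and then carry out the Steinitz recombination $\alpha=\gamma+c\beta$ by hand, the paper simply cites Stacks 07P2 together with a theorem of Becker--MacLane for the first point, and van der Waerden's form of the primitive element theorem for the second; the mathematics underneath is the same ($p$-bases and the classical conjugate-separating argument). Your version buys independence from those references at the cost of length, and it does lean on one standard fact you should state explicitly: a finite extension with $\Omega^1=0$ is separable (equivalently, for purely inseparable $m/K'$, $\Omega^1_{m/K'}=0$ gives $m=K'm^p$ and hence $m=K'$ by iterating). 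Two small slips, neither a gap: from $\alpha=\gamma+c\beta$ one recovers $\beta=c^{-1}(\alpha-\gamma)$, not $\alpha-c\gamma$, so you must also take $c\neq 0$ (harmless, as only finitely many $c$ are excluded and $k$ is infinite in that case); and the conditions excluding bad $c$ are linear in $c^{p^e}$ rather than in $c$, which still rules out at most one $c$ each by injectivity of Frobenius.
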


\begin{proof}
If $x\in l$ generates $l$ over $k$, then~$dx$ generates the $l$-vector space $\Omega^1_{l/k}$.
To prove the converse, we may assume that $l$ is purely inseparable over~$k$ (write $l/k$ as the composition of a separable and of a purely inseparable extension and apply the primitive element theorem in its form \cite[\S 6.10]{vdW}). In particular, the field~$k$ has positive characteristic $p$. Then, if $\dim_l\Omega^1_{l/k}\leq 1$, one has $[l:k\hspace{.05em}l^p]\mid p$ by \cite[Lemma~\href{https://stacks.math.columbia.edu/tag/07P2}{07P2}]{SP}, and $l/k$ is simple by \cite[Theorem 6]{BMacL}.
\end{proof}

\begin{lem}
\label{affineline}
Let $C$ be a smooth curve over a field $k$. Let $x\in C$ be a closed point. There exists a morphism $f:C\to \P^1_k$ that is \'etale at $x$ and that induces an isomorphism between $\kappa(f(x))$ and $\kappa(x)$.
\end{lem}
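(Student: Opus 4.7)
The plan is to construct the desired morphism from a well-chosen rational function $g \in k(C)^*$. Since $C$ is smooth (hence normal), any such $g$ defines a morphism $f : C \to \P^1_k$; I therefore reduce the lemma to exhibiting $g$ in a neighborhood of $x$ with two local properties, which together with standard étaleness criteria for morphisms of smooth curves will suffice:
\begin{enumerate}[label=(\alph*)]
\item the image $g|_x \in \kappa(x)$ generates $\kappa(x)$ over $k$;
\item the differential $dg|_x$ generates the one-dimensional $\kappa(x)$-vector space $\Omega^1_{C/k}|_x$.
\end{enumerate}
Here (a) yields the residue field isomorphism $\kappa(f(x)) \isoto \kappa(x)$, and, combined with (b), the unramifiedness (hence, via the automatic flatness of a nonconstant map between regular $1$-dimensional local rings, the étaleness) of $f$ at $x$.

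To secure a primitive element in $\kappa(x)/k$, I invoke Lemma \ref{primitive}. The right-exact conormal sequence
\[ \mathfrak{m}_x/\mathfrak{m}_x^2 \longrightarrow \Omega^1_{C/k}|_x \longrightarrow \Omega^1_{\kappa(x)/k} \longrightarrow 0, \]
combined with the smoothness of $C$ at $x$ (which makes the middle term $1$-dimensional over $\kappa(x)$), forces $\dim_{\kappa(x)} \Omega^1_{\kappa(x)/k} \leq 1$, so $\kappa(x)/k$ is simple by Lemma \ref{primitive}. Fix a primitive element $\alpha \in \kappa(x)$ and lift it to a regular function $\tilde\alpha$ on an affine open neighborhood of $x$; condition (a) holds for $g := \tilde\alpha$.

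To also arrange (b), I split into two cases via the conormal sequence. If $\kappa(x)/k$ is separable, then $\Omega^1_{\kappa(x)/k} = 0$ and the first map of the conormal sequence is an isomorphism; in particular, any uniformizer $t \in \mathfrak{m}_x$ satisfies $dt|_x \neq 0$. If $d\tilde\alpha|_x = 0$, replacing $\tilde\alpha$ by $\tilde\alpha + t$ preserves (a) (as $t$ reduces to $0$) and secures (b). If instead $\kappa(x)/k$ is inseparable, then $\dim_{\kappa(x)} \Omega^1_{\kappa(x)/k} = 1$, the second map of the conormal sequence is an isomorphism by dimension count, and this isomorphism sends $d\tilde\alpha|_x$ to $d\alpha$; since $\alpha$ generates $\kappa(x)/k$, the element $d\alpha$ generates $\Omega^1_{\kappa(x)/k}$, so $d\tilde\alpha|_x$ is already a generator of $\Omega^1_{C/k}|_x$ with no modification needed.

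The main delicacy lies in the inseparable case: the first map of the conormal sequence vanishes there, so one cannot adjust $dg|_x$ at all by changing the lift of $\alpha$. The argument succeeds precisely because the same rigidity pins $d\tilde\alpha|_x$ to $d\alpha$ under the canonical identification $\Omega^1_{C/k}|_x \isoto \Omega^1_{\kappa(x)/k}$, and $d\alpha$ is automatically a generator by primitivity of $\alpha$.
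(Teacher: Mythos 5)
Your proof is correct and follows essentially the same route as the paper: both arguments use the conormal sequence together with Lemma \ref{primitive} to produce a primitive element of $\kappa(x)/k$, lift it to a function near $x$, and then adjust the lift by an element of the maximal ideal (if necessary) so that its differential generates $\Omega^1_{C/k}|_x$, which gives unramifiedness and hence \'etaleness. Your explicit separable/inseparable case split simply unpacks the paper's one-line remark that, in view of the conormal sequence, one may replace $f$ by $f+g$ with $g\in I_x$ to make $df$ nonzero at $x$.
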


\begin{proof}
We may assume that $C$ is connected. Let $I$ be the ideal sheaf of $x$ in $C$.
As~$C$ is smooth, the coherent sheaf $\Omega^1_{C/k}$ is a line bundle on $C$. The exact sequence 
\begin{equation}
\label{Kahler}
I/I^2\xrightarrow{g\mapsto dg}\Omega^1_{C/k}|_x\to\Omega^1_{\kappa(x)/k}\to 0
\end{equation}
of $\kappa(x)$-vector spaces therefore shows that $\Omega^1_{\kappa(x)/k}$ has dimension~${\leq\,1}$ over $\kappa(x)$. By Lemma \ref{primitive}, the field $\kappa(x)$ is generated over $k$ by an ele\-ment~${t\in \kappa(x)}$ (so~$dt$ generates $\Omega^1_{\kappa(x)/k}$). Let~$f\in \cO_{C,x}$ be a lift of $t$. In view of~(\ref{Kahler}), after possibly replacing  $f$ with $f+g$ for some $g\in I_x$, we may assume that $df$ is nonzero at $x$. Our choices now imply that the morphism $f:C\to \P^1_k$  has the required properties.
\end{proof}

We call \textit{valued field} a field endowed with a nontrivial rank $1$ valuation (as in \cite[1.5.1]{BGR}). 
If $k$ is a complete valued field, we let $X^{\an}$ be the rigid analytification of an algebraic variety $X$ over $k$ (see \cite[9.3.4/2]{BGR}). 
Recall from \loccit that  the set underlying~$X^{\an}$ can be identified with the set of closed points of~$X$.

\begin{lem}
\label{Krasner1}
Let $C$ be a smooth curve over a complete valued field $k$. Fix $x\in C^{\an}$. There exists an open affinoid subvariety $x\in\Omega\subset C^{\an}$ (in the sense of \cite[9.3.1]{BGR}) such that for any $y\in \Omega$, there is a $k$-embedding of $\kappa(x)$ into $\kappa(y)$. 
\end{lem}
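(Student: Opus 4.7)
The plan is to reduce the statement to the classical Krasner's lemma on $\P^1_k$, using Lemma \ref{affineline} to transport the question from $C$ to the projective line.

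First, I apply Lemma \ref{affineline} to obtain a morphism $f:C\to\P^1_k$ étale at $x$ with $\kappa(f(x))\xrightarrow{\sim}\kappa(x)$, and shrink $C$ so that $f$ is étale everywhere. Set $z:=f(x)$. Because $f$ is étale at $x$ with trivial residue field extension, the induced map on completed local rings $\widehat{\cO}_{\P^1_k,z}\to\widehat{\cO}_{C,x}$ is an isomorphism, and the same holds for the germs of rigid analytifications: there exist affinoid neighborhoods $x\in U\subseteq C^{\an}$ and $z\in U'\subseteq (\P^1_k)^{\an}$ such that $f^{\an}|_U:U\xrightarrow{\sim} U'$ is a $k$-isomorphism of affinoid varieties, preserving residue fields as $k$-algebras. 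It therefore suffices to produce an affinoid neighborhood $z\in\Omega'\subseteq U'$ in $(\P^1_k)^{\an}$ such that $\kappa(z)$ admits a $k$-embedding into $\kappa(y')$ for every $y'\in\Omega'$, and then to set $\Omega:=(f^{\an}|_U)^{-1}(\Omega')$.

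For the reduced problem, I may assume $z\in\A^1_k\subseteq\P^1_k$. Let $P\in k[T]$ be the monic minimal polynomial of $z$, with roots $\alpha_1,\dots,\alpha_n\in\overline{k}$, and set $\rho:=\min_{i\neq j}|\alpha_i-\alpha_j|>0$. For $\epsilon>0$ small enough that $\epsilon^{1/n}<\rho$ and $\{|P(T)|\leq\epsilon\}\subseteq U'$, I take
\[
\Omega':=\{y\in(\A^1_k)^{\an}:|P(T)(y)|\leq\epsilon\},
\]
which is an affinoid subdomain of $(\P^1_k)^{\an}$ containing $z$ (a rational subdomain of a large closed disk). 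Any point $y'\in\Omega'$ lifts to some $\beta\in\overline{k}$ with $|P(\beta)|\leq\epsilon$, hence $|\beta-\alpha_i|\leq\epsilon^{1/n}<\rho$ for some index $i$; the ultrametric inequality then forces $|\beta-\alpha_i|<|\alpha_i-\alpha_j|$ for every conjugate $\alpha_j\neq\alpha_i$ of $\alpha_i$ over $k$. Krasner's lemma yields $\kappa(z)\simeq k(\alpha_i)\subseteq k(\beta)=\kappa(y')$, as required.

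The main delicate point is the passage from the scheme-theoretic étale morphism $f$ to a strict local isomorphism of rigid analytifications near $x$; this is a well-known consequence of étaleness combined with triviality of the residue field extension (equivalently, that formally étale with trivial residue field extension forces an isomorphism on completed local rings, which then lifts to the rigid category). Everything after that reduction is a direct rigid analytic avatar of Krasner's lemma.
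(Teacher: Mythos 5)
Your proof follows the same route as the paper's: transport the problem to $\P^1_k$ via Lemma \ref{affineline}, use that an \'etale map inducing an isomorphism on residue fields is a local isomorphism of rigid analytic spaces (this is exactly \cite[7.3.3/5]{BGR}, which the paper cites for this step), and conclude by Krasner's lemma in an affine chart of $\P^1_k$; your only divergence is that you make the Krasner step explicit via the minimal polynomial instead of citing \cite[3.4.2/2]{BGR}. Two minor points: rather than asserting that $\{|P|\le\epsilon\}\subseteq U'$ for $\epsilon$ small (which needs an argument), it is cleaner to take $\Omega':=U'\cap\{|P|\le\epsilon\}$, which is still an affinoid subdomain, exactly as the paper does when it ``shrinks'' $\Omega$ and $\Omega'$; and your explicit Krasner argument, like the paper's, tacitly assumes $P$ separable, which is automatic in the characteristic-zero applications the paper cares about.
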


\begin{proof}
Let $f:C\to\P^1_k$ be as in Lemma \ref{affineline}. By \cite[7.3.3/5]{BGR}, there exist open affinoid subvarieties $x\in\Omega\subset C^{\an}$ and $f(x)\in\Omega'\subset(\P^1_k)^{\an}$ such that $f$ induces an isomorphism $f|_{\Omega}:\Omega\isoto\Omega'$.
By Krasner's lemma \cite[3.4.2/2]{BGR} applied in an appropriate affine chart of $\P^1_k$, we may shrink $\Omega$ and~$\Omega'$ so that there exists a $k$-embedding of $\kappa(f(x))$ into $\kappa(x')$, for any $x'\in \Omega'$. This concludes the proof.
\end{proof}

\begin{lem}
\label{Krasner2}
Let $C$ be a connected smooth projective curve over a complete valued field $k$.
Let $\cL$ be a line bundle on $C$. Let $\sigma\in H^0(C,\cL)$ be a nonzero section. There exists a neighborhood $U$ of $\sigma$ in $H^0(C,\cL)$ (for the topology induced by that of $k$) 
such that for any $\sigma'\in U$ and any $x'\in C$ with $\sigma'(x')=0$, there exists $x\in C$ with~$\sigma(x)=0$ and a $k$-embedding of $\kappa(x)$ into $\kappa(x')$.
\end{lem}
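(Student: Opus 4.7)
The plan is to reduce to Lemma~\ref{Krasner1} via a compactness argument on $C^{\an}$. First, I list the finitely many closed points $x_1,\ldots,x_n$ of $C$ at which $\sigma$ vanishes (finite since $C$ is projective and $\sigma\neq 0$), and apply Lemma~\ref{Krasner1} to each to obtain an open affinoid subvariety $x_i\in\Omega_i\subset C^{\an}$ with the stated embedding property. After shrinking, I may further assume that $\cL|_{\Omega_i}$ is trivial. The strategy is then to show that, for $\sigma'$ sufficiently close to $\sigma$, every zero of $\sigma'$ is forced to lie in $\bigcup_i\Omega_i$, so that the conclusion follows immediately from the property of the $\Omega_i$.

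Using that $C^{\an}$ is quasi-compact (because $C$ is projective), I would extend the $\{\Omega_i\}$ to a finite admissible cover
$$C^{\an}=\Omega_1\cup\cdots\cup\Omega_n\cup W_1\cup\cdots\cup W_m$$
by open affinoids, each trivializing $\cL$ and each $W_\ell$ disjoint from $\{x_1,\ldots,x_n\}$. On such a $W_\ell$, the trivialization $\sigma_\ell\in\cO(W_\ell)$ of $\sigma$ is a unit, and so $\epsilon_\ell:=\inf_{y\in W_\ell}|\sigma_\ell(y)|>0$ by the standard estimate $|\sigma_\ell(y)|\cdot|\sigma_\ell^{-1}|_{\sup}\geq 1$ available on any affinoid.

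Next, I would define $U$ to be the set of $\sigma'\in H^0(C,\cL)$ whose trivialization $\sigma'_\ell$ satisfies $|\sigma'_\ell-\sigma_\ell|_{\sup,W_\ell}<\epsilon_\ell$ for every $\ell=1,\dots,m$. Since $H^0(C,\cL)$ is a finite-dimensional $k$-vector space and each restriction map to the $k$-Banach algebra $\cO(W_\ell)$ is $k$-linear, hence automatically continuous, $U$ is an open neighborhood of $\sigma$. For $\sigma'\in U$ and $y\in W_\ell$, the triangle inequality gives $|\sigma'_\ell(y)|\geq|\sigma_\ell(y)|-|\sigma_\ell(y)-\sigma'_\ell(y)|>\epsilon_\ell-\epsilon_\ell=0$, so $\sigma'$ has no zeros on any $W_\ell$. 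Every zero $x'$ of $\sigma'$ therefore lies in some $\Omega_i$, and Lemma~\ref{Krasner1} provides a $k$-embedding $\kappa(x_i)\hookrightarrow\kappa(x')$, so setting $x:=x_i$ finishes the argument.

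The main technical point will be the cover refinement: one must use standard rigid-analytic results on admissible coverings to split a finite affinoid cover of $C^{\an}$ according to whether a piece meets the finite zero set $\{x_1,\ldots,x_n\}$ or not. This is where the assumption that $C$ is projective is essential, via the quasi-compactness of $C^{\an}$.
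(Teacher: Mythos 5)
Your proposal is correct and follows essentially the same route as the paper: apply Lemma~\ref{Krasner1} at each zero of $\sigma$, complete to a finite affinoid cover whose remaining pieces avoid the zeros and trivialize $\cL$, and use the supremum norm on those affinoids (your $\epsilon_\ell=\inf_{y}|\sigma_\ell(y)|$ is exactly the paper's $1/|\tfrac{1}{s}|_{\sup}$) to show that sections close to $\sigma$ cannot vanish there. The only cosmetic difference is that the paper phrases the perturbation in terms of coordinates with respect to a basis of $H^0(C,\cL)$ rather than via continuity of the restriction maps, and invokes properness of $C^{\an}$ where you invoke quasi-compactness.
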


\begin{proof}
Let $(x_i)_{1\leq i\leq n}$ be the zeros of $\sigma$ on $C$. Use Lemma \ref{Krasner1} to find an open affinoid subvariety $x_i\in\Omega_i\subset C^{\an}$ such that for any $y_i\in\Omega_i$, there is a $k$-embedding of $\kappa(x_i)$ into $\kappa(y_i)$. As $C^{\an}$ is proper (in the sense of \cite[9.6.2]{BGR}), one can find open affinoid subvarieties $(\Omega'_j)_{1\leq j\leq m}$ of $C^{\an}$ not containing any of the $x_i$, such that~$C^{\an}$ is covered by the $\Omega_i$ and the $\Omega'_j$. After replacing each $\Omega'_j$ by an affinoid covering,
we may assume that $\cL^{\an}|_{\Omega'_j}$ is trivial. Fix trivializations $\varphi_j:\cL^{\an}|_{\Omega'_j}\isoto\cO_{\Omega'_j}$.

 We claim that, for all $1\leq j\leq m$,  there exists a neighborhood $U_j$ of $\sigma$ in $H^0(C,\cL)$ such that no $\sigma'\in U$ vanishes on $\Omega'_j$. Setting $U:=\cap_{j=1}^mU_j$ then concludes the proof.
 
We now fix $1\leq j\leq m$ and prove the claim. Choose a basis $(\sigma_1,\dots,\sigma_N)$ of~$H^0(C,\cL)$. Let $s,s_1,\dots,s_N\in\cO(\Omega'_j)$  be the images of $\sigma,\sigma_1,\dots,\sigma_N$ by $\varphi_j$. Let~$|\,.\,|_{\sup}$ be the supremum norm on the affinoid algebra $\cO(\Omega'_j)$
(see \cite[6.2.1]{BGR}).
 Set $M_l:=|s_l|_{\sup}$ for $1\leq l\leq N$ and $M:=|\frac{1}{s}|_{\sup}$. 
These choices ensure that if $a_1,\dots, a_N\in k$ are such that $|a_l|<\frac{1}{MM_l}$, then $s+\sum_{l=1}^Na_ls_l$ does not vanish anywhere on $\Omega'_j$, and hence neither does $\sigma+\sum_{l=1}^Na_l\sigma_l$. Define $U_j\subset H^0(C,\cL)$ to be the set of $\sigma+\sum_{l=1}^Na_l\sigma_l$ with $a_1,\dots, a_N\in k$ as above.
\end{proof}

We now reach the main goal of this section.

\begin{prop}
\label{openness}
Let $C$ be a 
connected smooth projective curve over a $p$-adic field~$k$. Let $\cL$ be a line bundle on $C$. 
Fix $a,b\in k^*$.
Let ${\Sigma\subset H^0(C,\cL^{\otimes 2})}$ be the set of sections represented by $\llangle a,b\rrangle$ at the generic point of $C$. Fix~${\sigma\in \Sigma\setminus\{0\}}$. If~$\llangle a,b\rrangle$ is isotropic in the residue fields of all the zeros of $\sigma$ of even multiplicity~$\geq2$, then~$\Sigma$ contains a $p$-adic neighborhood of $\sigma$ in $H^0(C,\cL^{\otimes 2})$.
\end{prop}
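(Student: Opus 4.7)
The plan is to reduce, via Proposition \ref{local}, the assertion $\sigma'\in\Sigma$ to checking that $\llangle a,b\rrangle$ represents $\sigma'$ in $k(C)^{\h}_y$ for each codimension $1$ point $y$ of a fixed proper regular model $\pi\colon X\to\Spec(\cO_k)$ of~$C$. Such points split into the closed points of $C$ (horizontal) and the finitely many generic points $Y_1,\dots,Y_r$ of the components of the special fiber (vertical). A key preliminary observation is that at every zero $x$ of $\sigma$, the form $\llangle a,b\rrangle$ is isotropic over $\kappa(x)$: this is the hypothesis at zeros of even multiplicity $\geq 2$, and at a zero of odd multiplicity it follows by computing the residue at $x$ of the class $\{a\}\{b\}\{-\sigma\}\in H^3(k(C)^{\h}_x,\Z/2)$, which vanishes because $\sigma\in\Sigma$ (Proposition \ref{lemMS}); this residue equals $\{a\}|_{\kappa(x)}\cdot\{b\}|_{\kappa(x)}\in H^2(\kappa(x),\Z/2)$, whose vanishing is equivalent to isotropy of $\llangle a,b\rrangle$ over $\kappa(x)$.

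For horizontal points, I~would apply Lemma \ref{Krasner1} to select, for each zero $x$ of $\sigma$, an open affinoid $x\in\Omega_x\subset C^{\an}$ with $\kappa(x)\hookrightarrow\kappa(x')$ for every classical $x'\in\Omega_x$, and let $V$ be a complementary quasi-compact subdomain on which $\sigma$ is nonvanishing, so that $|1/\sigma|_{\sup,V}<\infty$. I~would then choose a $p$-adic neighborhood $U_1$ of $\sigma$ small enough that $|\sigma'/\sigma-1|_{\sup,V}\leq|\pi|^{2v_k(2)+1}$ for all $\sigma'\in U_1$, and shrink it further to a smaller neighborhood $U_2$ via Lemma \ref{Krasner2}, so that every zero of any $\sigma'\in U_2$ has residue field containing $\kappa(x)$ for some zero $x$ of $\sigma$. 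For $\sigma'\in U_2$ and any closed point $x'\in C$: if $x'\in V$ and $\sigma'(x')\neq 0$, the inequality $v_{\kappa(x')}(\sigma'/\sigma-1)\geq(2v_k(2)+1)\cdot e_{\kappa(x')/k}\geq 2v_{\kappa(x')}(2)+1$ combined with Lemma \ref{Hensel} applied to $X^2-\sigma'/\sigma$ forces $\sigma'/\sigma\in\kappa(x')^{*2}$, reducing representability of $\sigma'$ in $k(C)^{\h}_{x'}$ to that of $\sigma$, which holds by hypothesis; otherwise, either $x'\in\Omega_x$ for some zero $x$ of $\sigma$ or $\sigma'(x')=0$, and the preliminary observation yields isotropy of $\llangle a,b\rrangle$ over $\kappa(x')$, which therefore represents $\sigma'$ universally in $k(C)^{\h}_{x'}$.

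For vertical points, I~would fix an extension $\widetilde\cL$ of $\cL$ to $X$ and set $L:=H^0(X,\widetilde\cL^{\otimes 2})$, a finitely generated $\cO_k$-module with $L\otimes_{\cO_k}k=H^0(C,\cL^{\otimes 2})$; after rescaling, one may assume $\sigma\in L$, so the subsets $\sigma+\pi^N L$ for $N\geq 0$ form a basis of $p$-adic neighborhoods of $\sigma$. Letting $e_i$ denote the multiplicity of $Y_i$ and $m_i:=v_{Y_i}(\sigma)$, one has $v_{Y_i}(\sigma'/\sigma-1)\geq Ne_i-m_i$ for $\sigma'\in\sigma+\pi^N L$. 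Choosing $N$ so that $Ne_i-m_i\geq 2e_iv_k(2)+1=2v_{Y_i}(2)+1$ for every $i$ (uniformly possible since the vertical divisors are finitely many), Lemma \ref{Hensel} applied to $X^2-\sigma'/\sigma$ shows that $\sigma'/\sigma$ is a square in $\cO^{\h}_{X,Y_i}$, so representability of $\sigma'$ in $k(C)^{\h}_{Y_i}$ reduces to that of $\sigma$. Intersecting $U_2$ with such a neighborhood $\sigma+\pi^N L$ produces the $p$-adic neighborhood of $\sigma$ contained in $\Sigma$ promised by the proposition.

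The main technical obstacle is the uniformity required in the horizontal case: the residue fields $\kappa(x')$ range over infinitely many finite extensions of $k$ with unbounded ramification $e_{\kappa(x')/k}$, so a pointwise application of Hensel's lemma for square roots would a priori require a $p$-adic neighborhood of $\sigma$ depending on $x'$. This is bypassed by combining the rigid analytic sup norm on $V$ (which supplies a single global closeness estimate) with the uniform implication $|u-1|\leq|\pi|^{2v_k(2)+1}\Rightarrow u\in\kappa^{*2}$, valid in every finite extension $\kappa/k$ regardless of its ramification.
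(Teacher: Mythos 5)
Your proof is correct and its skeleton coincides with the paper's: reduce via Proposition \ref{local} to representability in the henselizations at the codimension-$1$ points of a regular model, treat the closed points of $C$ near the zeros of $\sigma$ with the Krasner lemmas, and treat the finitely many vertical divisors with Lemma \ref{Hensel}; your residue computation of $\{a\}\cdot\{b\}\cdot\{-\sigma\}$ at an odd-order zero is the cohomological counterpart of the paper's argument via \cite[Lemma 19.5]{EKM}. Where you genuinely diverge is at the horizontal points where $\sigma'$ does not vanish: you force $(\sigma'/\sigma)(x')$ to be a square in $\kappa(x')$ by a uniform sup-norm estimate on an affinoid complement of the Krasner neighborhoods, and you single out this uniformity as the main technical obstacle. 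The paper avoids the issue altogether: since $\kappa(x')$ is a $p$-adic field, the nondegenerate rank-$4$ form $\llangle a,b\rrangle$ is universal over it (\cite[I.3.5 and VI.2.12]{Lam}), so it represents $\tau(x')\neq 0$ with no closeness hypothesis at all; $p$-adic proximity to $\sigma$ is needed only to control the zero loci (Lemma \ref{Krasner2}) and the finitely many vertical divisors. Your route is thus correct but does extra work, and it elides one small step: from ``$(\sigma'/\sigma)(x')$ is a nonzero square in $\kappa(x')$'' you still need a further application of Hensel's lemma in $\cO^{\h}_{C,x'}$ (immediate, as its residue characteristic is zero) to conclude that $\sigma'/\sigma$ is a square in $k(C)^{\h}_{x'}$, which is what actually transfers representability from $\sigma$ to $\sigma'$. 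At the vertical points, your variant --- exhibiting $\sigma'/\sigma$ as a square in $\cO^{\h}_{X,Y_i}$ using the lattice $H^0(X,\widetilde{\cL}^{\otimes 2})$ --- is a clean alternative to the paper's perturbation of an isotropic vector of $c\cdot\big(\llangle a,b\rrangle\perp\langle-\sigma/\alpha^2\rangle\big)$; both rest on Lemma \ref{Hensel} and either works.
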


\begin{proof}
Fix a nonzero rational section $\alpha$ of $\cL$. It follows from the definitions that a section $\tau\in H^0(C,\cL^{\otimes 2})$ is represented by $\llangle a,b\rrangle$ at the generic point of $C$ if and only if~$\frac{\tau}{\alpha^2}\in k(C)$ is represented by $\llangle a,b\rrangle$ in $k(C)$. This remark allows us to transfer facts about representations of functions by $\llangle a,b\rrangle$ into statements about representations of sections by $\llangle a,b\rrangle$; we use it below without further comments.

Let $x\in C$ be a zero of odd multiplicity of $\sigma$.
As $\llangle a,b\rrangle$ represents $\sigma$ at the generic point of $C$, we deduce that $\llangle a,b\rrangle$ represents an element of odd valuation in $\cO_{C,x}$. It follows that $\llangle a,b\rrangle$ is isotropic in $\kappa(x)$ (see \eg \cite[Lemma 19.5]{EKM}). 
By hypothesis, the same holds for any zero of $\sigma$. 
Lemma \ref{Krasner2} therefore produces a neighborhood~$U\subset H^0(C,\cL^{\otimes 2})$ of $\sigma$ such that for any $\tau\in U$ and any $x'\in C$ with~$\tau(x')=0$, the form $\llangle a,b\rrangle$ is isotropic in $\kappa(x')$.

 We claim that for any $\tau\in U$ and any $x\in C$, the form $\llangle a,b\rrangle$ represents~$\tau$ in the henselization~$k(C)^{\h}_x$ of $k(C)$ at $x$. If $x$ is not a zero of $\tau$, then $\llangle a,b\rrangle$ represents~$\tau(x)$ in~$\kappa(x)$ (see \cite[I.3.5 and VI.2.12]{Lam}), and hence represents~$\tau$ in~$k(C)^{\h}_x$ by henselianity (use \cite[Lemma~\href{https://stacks.math.columbia.edu/tag/0H74}{0H74}]{SP}). If $x$ is a zero of $\tau$, then $\llangle a,b\rrangle$ is isotropic in $\kappa(x)$ by choice of $U$, hence is isotropic in $k(C)^{\h}_x$ by another application of \cite[Lemma~\href{https://stacks.math.columbia.edu/tag/0H74}{0H74}]{SP}. It follows that $\llangle a,b\rrangle$ represents $\tau$ in $k(C)^{\h}_x$ (see \mbox{\cite[I.3.4]{Lam}}).

Let~$\pi:X\to \Spec(\cO_k)$ be a proper regular model of $C$. Let $x$ be the generic point of an irreducible component of the special fiber of $X$. Since the form $\llangle a,b\rrangle$ represents~$\sigma$ in~$k(C)$,
the form $\llangle a,b\rrangle\perp\langle-\frac{\sigma}{\alpha^2}\rangle$ is isotropic over $k(C)$, hence also over the henselization $k(C)^{\h}_x$ of $k(C)$ at $x$. Let $c\in (k(C)^{\h}_x)^*$ be such that $c\cdot\frac{\sigma}{\alpha^2}\in\cO_{X,x}^{\h}$. By Lemma \ref{Hensel} (applied over~$A=\cO_{X,x}^{\h}$ with~$m=1$ and $f_1$ equal to the quadratic form $c\cdot\big(\llangle a,b\rrangle\perp\langle-\frac{\sigma}{\alpha^2}\rangle\big)$), there exists a neighborhood $V_x\subset H^0(C,\cL^{\otimes 2})$ of ~$\sigma$ such that 
$c\cdot\big(\llangle a,b\rrangle\perp\langle-\frac{\tau}{\alpha^2}\rangle\big)$ is isotropic in $k(C)^{\h}_x$ for all $\tau\in V_x$. In view of \cite[I.3.4]{Lam}, the form $\llangle a,b\rrangle$ represents $\tau$ in~$k(C)_x^{\h}$ for all $\tau\in V_x$.

To conclude, define $W:=U\cap \bigcap_xV_x$ (where $x$ runs over all the generic points of the irreducible components of the special fiber of $X$). It is a neighborhood of~$\sigma$ in~$H^0(C,\cL^{\otimes 2})$. Our choices imply that for all $\tau\in W$, the form $\llangle a,b\rrangle$ represents~$\tau$ in~$k(C)_x^{\h}$ (for all points $x\in X$ of codimension $1$).  By Proposition \ref{local}, the form~$\llangle a,b\rrangle$ also represents $\sigma$ in $k(C)$.
\end{proof}

\begin{rems}
(i) The hypothesis on the zeros of $\sigma$ in Proposition \ref{openness} cannot be removed. To see it, set $a=b=-1$, take $k:=\Q_2$ and $C:=\P^1_{\Q_2}$ with homogeneous coordi\-nates~$[X:Y]$, and choose~$\cL:=\cO_{C}(1)$. Then $X^2$ is represented by~$\llangle-1,-1\rrangle$ in~$k(C)$, but $X^2-a^2Y^2$ is not represented by $\llangle-1,-1\rrangle$ in $k(C)$, for any~$a\in\Q_2^*$ (by \cite[IX.2.3]{Lam} and since $-1$ is not a sum of $3$ squares in $\Q_{2}$).

(ii) Proposition \ref{openness} applies, in particular, when the zero locus of $\sigma$ is reduced.
\end{rems}

\section{Representing sections of line bundles by quadratic forms}
\label{secquadric}
 
 Let $C$ be a connected smooth projective curve over a field $k$ and let $q$ be a nondegenerate quadratic form over $k$. In this section, we find necessary and sufficient conditions for a section of the square of a line bundle over $C$ to be be represented by $q$ over~$k(C)$, when the quadratic form is either isotropic (see Proposition \ref{condition2prop}), or definite over the field of real numbers (see Proposition \ref{condition1prop}).

\subsection{Geometry of a quadric bundle}
\label{parquadrics}

Let $k$ be a field with $2\in k^*$. 
Choose elements $b_1,\dots,b_{r-2}\in k^*$ for some $r\geq 3$.
 Let~$C$ be a connected smooth projective curve over $k$. 
 Let $\cL$ be a line bundle on~$C$. Fix a nonzero section $\sigma\in H^0(C,\cL^{\otimes 2})$ with reduced zero locus.

Define $\P:=\P_C(\cL\oplus\cO_C^{\oplus r})$, where the projectivization is taken in Grothendieck's sense (so $\P$ parametrizes rank $1$ quotients of $\cL\oplus\cO_C^{\oplus r}$). Let $f:\P\to C$ be the structural morphism. One then computes that ${H^0(\P,\cO_\P(1))=H^0(C,\cL)\oplus H^0(C,\cO_C)^{\oplus r}}$ (define $T_1,\dots,T_r\in H^0(\P,\cO_\P(1))$ to be generators of the last $r$ factors) and that $H^0(\P,\cO_\P(1)\otimes f^*\cL^{-1})= H^0(C,\cO_C)\oplus H^0(C,\cL^{-1})^{\oplus r}$
(let $T_0\in H^0(\P,\cO_\P(1)\otimes f^*\cL^{-1})$ be a generator of the first factor). Define
\begin{equation}
\label{eqX}
X:=\{\sigma\, T_0^2=\sum_{i=1}^{r-2}b_iT_i^2+T_{r-1}T_r\}\subset \P.
\end{equation}
Then $f|_X:X\to C$ endows $X$ with the structure of a quadric bundle. Its fibers are rank $r+1$ quadrics, except over the zero locus of $\sigma$, where they are rank $r$ quadrics. That $\sigma$ has reduced zero locus implies that $X$ is smooth.

We also consider~$\P':=\P_C(\cL\oplus\cO_C^{\oplus (r-1)})$ and let $g:\P'\to C$ be the structural morphism. As above, introduce the coordinate system on ~$\P'$ given by the canonical sections $U_0\in H^0(\P',\cO_{\P'}(1)\otimes g^*\cL^{-1})$ and $U_1,\dots, U_{r-1}\in H^0(\P',\cO_{\P'}(1))$.

Let $\iota:C\to \P$ be the section of $f$ associated with the projection ${\cL\oplus\cO_C^{\oplus r}\to\cO_C}$ onto the last factor. Let $\widetilde{\P}\to\P$ be the blow-up of $\P$ along the image $\iota(C)$ of $\iota$. 
Projection from $\iota(C)$ yields a birational map~$\P\dashrightarrow\P'$ which is resolved by blowing up $\iota(C)$, and hence gives rise to a morphism $\wP\to\P'$ 
(these facts are easily checked locally, over open subsets of $C$ where $\cL$ can be trivialized).

As the image of $\iota$ is included in $X$, the strict transform $\wX$ of $X$ in $\wP$ is the blow-up of $X$ along the image of $\iota$. Let $p:\wX\to X$ be the blow-up morphism. Set
$$Y:=\{U_{r-1}=\sigma\, U_0^2-\sum_{i=1}^{r-2}b_iU_i^2=0\}\subset\P'.$$
Then the projection morphism $q:\wX\to \P'$ identifies with the blow-up of $Y$ (again, this can be checked locally on $C$). 
The following diagram summarizes the situation.

\begin{equation*}
\begin{aligned}
\label{diagquad}
\xymatrix
@R=2ex
@C=.75cm{
&\wP\ar@/^2.3pc/[rrd]\ar@{}[r]|-*[@]{\supset}\ar[ld]&\wX\ar^p[ld]\ar_q[rd]& \\
\P\ar_f@/_0.4pc/[rrd]\ar@{}[r]|-*[@]{\supset}&X\ar[rd]&&\P'\ar^g[ld]\\
&&C&
}
\end{aligned}
\end{equation*}

Let $E$ and $F$ be the exceptional divisors of $p$ and $q$ respectively.

\begin{lem}
\label{lemlb}
One has $p^*(\cO_{\P}(1)|_X)\simeq\cO_{\wX}(2E+F)$ and $q^*\cO_{\P'}(1)\simeq\cO_{\wX}(E+F)$.
\end{lem}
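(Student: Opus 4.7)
My plan is to reduce both identities to a single computation on $\wP$ and then restrict to $\wX$. First, I would establish the master formula $\pi^*\cO_\P(1)\simeq\rho^*\cO_{\P'}(1)\otimes\cO_\wP(E_\wP)$ on $\wP$, where $E_\wP$ denotes the exceptional divisor of $\pi:\wP\to\P$. This is a relative version of the classical relation for the blow-up of a point in $\P^r$ followed by projection to $\P^{r-1}$; since the claim is local over $C$, where $\cL$ can be trivialized, it reduces to this standard fact. Alternatively, one directly checks that for a hyperplane $\{T_i=0\}\subset\P$ with $1\leq i\leq r-1$ (which contains $\iota(C)$ with multiplicity one), its strict transform in $\wP$ coincides with $\rho^*\{U_i=0\}$: the two divisors agree on $\wP\setminus E_\wP$ and they meet $E_\wP$ in the same fiberwise hyperplane via the identification $(E_\wP)_c\simeq\P'_c$.

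Next, I restrict this master formula to $\wX$. Since $X$ is smooth along $\iota(C)$, the strict transform of $X$ in $\wP$ coincides with the blow-up $\wX=p^{-1}X$, and $E=\wX\cap E_\wP$ as Cartier divisors (this follows from the standard analysis of nested smooth blow-up centers via the short exact sequence $0\to N_{\iota(C)/X}\to N_{\iota(C)/\P}\to N_{X/\P}|_{\iota(C)}\to 0$). Restricting therefore gives $p^*(\cO_\P(1)|_X)\simeq q^*\cO_{\P'}(1)\otimes\cO_\wX(E)$, so the two identities of the lemma are equivalent, and it suffices to prove $q^*\cO_{\P'}(1)\simeq\cO_\wX(E+F)$.

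For this remaining identity, I would pull back the section $U_{r-1}\in H^0(\P',\cO_{\P'}(1))$. Its zero locus $\{U_{r-1}=0\}\subset\P'$ is a smooth hypersurface containing $Y$ with multiplicity one (the two defining equations of $Y$ are transverse at generic points of $Y$), so the blow-up formula for $q:\wX\to\P'$ with center $Y$ yields $q^*\{U_{r-1}=0\}=\widetilde{\{U_{r-1}=0\}}+F$. The crux, and the main obstacle, is to identify this strict transform with $E$: a fiberwise computation over $c\in C$ shows that $E_c$ is the projectivized tangent space to the smooth quadric $X_c$ at $\iota(c)$, which under the isomorphism $(E_\wP)_c\simeq\P'_c$ corresponds to the tangent hyperplane $\{U_{r-1}=0\}$ (the equation of $X$ has linear part $-T_{r-1}T_r$ at $\iota(c)$, with $T_r\neq 0$). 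Thus $q(E)\subseteq\{U_{r-1}=0\}$, so $E\subseteq q^{-1}(\{U_{r-1}=0\})=F\cup\widetilde{\{U_{r-1}=0\}}$; since $E$ is an irreducible divisor distinct from $F$ and of the same dimension as $\widetilde{\{U_{r-1}=0\}}$, we conclude that $E=\widetilde{\{U_{r-1}=0\}}$. The delicate point is that $E$ and $\widetilde{\{U_{r-1}=0\}}$ are defined via the two different blow-up presentations of $\wX$, and it is the tangent hyperplane computation that ties them together.
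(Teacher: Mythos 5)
Your proof is correct, and it overlaps with the paper's on the second identity while taking a different route to the first. For $q^*\cO_{\P'}(1)\simeq\cO_{\wX}(E+F)$ both arguments pull back $W=\{U_{r-1}=0\}$, whose total transform under $q$ is $\widetilde{W}+F$ since $W$ is smooth along $Y$; the paper simply \emph{asserts} that $E$ is the strict transform of $W$, and your tangent-hyperplane computation (the defining equation of $X$ has linear part $-T_{r-1}T_r$ along $\iota(C)$ with $T_r$ invertible, so $q(E)\subset W$, whence $E=\widetilde{W}$ by irreducibility) is precisely the justification the paper leaves implicit. For the first identity the paper argues directly: it takes $Z=\{T_{r-1}=0\}\cap X$, whose class is $\cO_{\P}(1)|_X$, notes that $Z$ has multiplicity $2$ along $\iota(C)$ and that its strict transform is $F$ (the divisor contracted by $q$ onto $Y$), so $p^*Z=2E+F$. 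You instead deduce the first identity from the second via the relation $\pi^*\cO_{\P}(1)\simeq\rho^*\cO_{\P'}(1)\otimes\cO_{\wP}(E_{\wP})$ on the ambient $\wP$ together with $E_{\wP}|_{\wX}=E$. The trade-off is even: the paper needs the extra identification $F=\widetilde{Z}$ but no ambient computation, while you need the master formula and the restriction step but only one strict-transform identification. Two cosmetic points: $\wX$ is the strict transform of $X$ in $\wP$, not the total preimage (which contains $E_{\wP}$); and for $E_{\wP}|_{\wX}=E$ it is simplest to note that $X$ is a smooth divisor in $\P$ containing $\iota(C)$ with multiplicity one, rather than invoking the normal bundle sequence. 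Neither affects the argument.
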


\begin{proof}
The divisor $F\subset \wX$ is the strict transform in $\wX$ of ${Z:=\{T_{r-1}=0\}\subset X}$. The very definition of $Z$ shows that $\cO_X(Z)\simeq\cO_{\P}(1)|_X$. In addition, as the multiplicity of $Z$ along the image of $\iota$ is equal to $2$, the inverse image by $p$ of the Cartier divisor $Z\subset X$ is equal to $2E+F$. The first equality follows.

The divisor $E\subset \wX$ is the strict transform in $\wX$ of $W:=\{U_{r-1}=0\}\subset \P'$, so that $\cO_{\P'}(W)\simeq\cO_{\P'}(1)$. As $W$ is smooth along $Y$, the inverse image of $W\subset \P'$ by~$q$ is the Cartier divisor $E+F$. This completes the proof of the lemma.
\end{proof}

\subsection{The isotropic case}

The next proposition is the main result of this section.

\begin{prop}
\label{condition2prop}
Let $k$ be an infinite field with $2\in k^*$.
Choose ${a_1,\dots,a_r\in k^*}$ for some $r\geq 3$. Assume that $\langle a_1,\dots a_r\rangle\simeq \langle b_1,\dots, b_{r-2}\rangle\perp h$, where ${b_1,\dots, b_{r-2}\in k^*}$ 
and $h$ is the hyperbolic plane $(x,y)\mapsto xy$.
Let $C$ be a connected smooth projective curve over~$k$. Fix $\cL$, $\cM$ and $\cA$ in $\Pic(C)$, with $\cA$ ample. For~${\sigma\in H^0(C,\cL^{\otimes 2})}$ nonzero with reduced zero locus, the following assertions are \mbox{equivalent}.
\begin{enumerate}[label=(\roman*)] 
\item
\label{iquad}
For some $\cN\in\Pic(C)$, there exist a nonzero section $\alpha\in H^0(C,\cM\otimes\cN^{\otimes 2})$, and sections $\beta_{i}\in H^0(C,\cL\otimes\cM\otimes\cN^{\otimes 2})$ no two of which have a common zero, such that
$\sigma\alpha^2=\sum_{i=1}^ra_i\beta_{i}^2$.
 \item
 \label{iiquad}
 Assertion \ref{iquad} holds with $\cN=\cA^{\otimes l}$ for all large enough integers $l$.
 \item
 \label{iiiquad}
 There exist $\cP\in\Pic(C)$ and a divisor~$\Delta$ on $C$ with $\cO_C(\Delta)\simeq\cL\otimes\cM\otimes\cP^{\otimes 2}$, such that~$\sigma$ is nonzero and represented by $\langle b_1,\dots, b_{r-2}\rangle$ at all points of the support of $\Delta$. 
\end{enumerate}
\end{prop}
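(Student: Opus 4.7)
The plan is to prove the cycle of implications $(ii) \Rightarrow (i) \Rightarrow (iii) \Rightarrow (ii)$, of which the first is immediate. For the other two, I would first use the isomorphism $\langle a_1, \dots, a_r \rangle \simeq \langle b_1, \dots, b_{r-2} \rangle \perp h$ to perform a $k$-linear change of basis, converting $\sigma \alpha^2 = \sum_i a_i \beta_i^2$ into the quadric-bundle equation $\sigma \alpha^2 = \sum_{i=1}^{r-2} b_i \gamma_i^2 + \gamma_{r-1} \gamma_r$ of \S\ref{parquadrics}. Chosen generically (possible because $k$ is infinite), such a change of basis turns solutions $(\alpha, \beta_i)$ satisfying the no-two-common-zeros condition into sections $s : C \to X$ of $f|_X$ whose image avoids $\iota(C)$, and moreover such that $\alpha$ does not vanish at any zero of $\gamma_{r-1}$ nor at any zero of $\sigma$ (both finite sets, so avoidable by genericity on the $O(\langle b_1,\dots,b_{r-2}\rangle)\times O_2$ of admissible changes of basis).

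For $(i) \Rightarrow (iii)$, I would lift such a section to $\widetilde{s} : C \to \wX$ (uniquely, since $s$ avoids the blow-up center $\iota(C)$) and compose with $q$ to obtain $\phi : C \to \P'$. Since $\phi^* U_0 = \alpha$ and $\phi^* U_i = \gamma_i$ for $1 \leq i \leq r-1$, these sections identify $\phi^* \cO_{\P'}(1) \simeq \cL \otimes \cM \otimes \cN^{\otimes 2}$, consistently with Lemma~\ref{lemlb}. Taking $\cP := \cN$ and $\Delta := \ddiv(\gamma_{r-1}) = \phi^{-1}\{U_{r-1} = 0\}$ gives the desired class equality $\cO_C(\Delta) \simeq \cL \otimes \cM \otimes \cP^{\otimes 2}$. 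At each $x \in \Supp(\Delta)$, the relation $\phi(x) \in Y$ forces $\sigma(x) \alpha(x)^2 = \sum_{i=1}^{r-2} b_i \gamma_i(x)^2$; by the genericity hypothesis, $\alpha(x) \neq 0$ and $\sigma(x) \neq 0$, exhibiting $\sigma(x)$ as nonzero and represented by $\langle b_1, \dots, b_{r-2} \rangle$.

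For $(iii) \Rightarrow (ii)$, given $\cP$ and $\Delta$, I would take $\cN := \cA^{\otimes l}$ for $l$ large and proceed in stages. First, after moving $\Delta$ within its linear equivalence class (via Riemann--Roch, using the ampleness of $\cA$) to a reduced effective divisor still contained in the representation locus of $\sigma$, pick at each $x \in \Supp(\Delta)$ values $(u_{i,x})_{1 \leq i \leq r-2}$ with $\sigma(x) = \sum b_i u_{i,x}^2$. Serre vanishing makes the relevant evaluation maps onto $\Supp(\Delta)$ surjective for $l$ large, so I can choose $\alpha \in H^0(C, \cM \otimes \cN^{\otimes 2})$ nonvanishing on $\Supp(\Delta)$ and $\gamma_i \in H^0(C, \cL \otimes \cM \otimes \cN^{\otimes 2})$ for $1 \leq i \leq r-2$ realizing the ratios $\gamma_i(x)/\alpha(x) = u_{i,x}$. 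Then $\rho := \sigma \alpha^2 - \sum b_i \gamma_i^2$ vanishes on $\Supp(\Delta)$, and a further genericity argument (using Bertini-type freedom afforded by Serre vanishing) ensures $\ddiv(\rho) = \Delta + D'$ with $D'$ reduced effective and disjoint from $\Delta$. Splitting $D' = E + E'$ into effective pieces of the classes prescribed by $\gamma_{r-1}, \gamma_r \in H^0(C, \cL \otimes \cM \otimes \cN^{\otimes 2})$---feasible for $l$ large, since one piece needs class $[(\cA^{\otimes l}\otimes\cP^{-1})^{\otimes 2}]$ and the other $[\cL\otimes\cM\otimes\cA^{\otimes 2l}]$, both abundantly represented---I would set $\gamma_{r-1}$ with divisor $\Delta + E$ and $\gamma_r$ with divisor $E'$, rescaled so that $\gamma_{r-1} \gamma_r = \rho$. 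Undoing the change of basis yields the $\beta_i$.

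The principal obstacle lies in this last implication: keeping the residual divisor $D'$ reduced and disjoint from $\Delta$, splitting it into pieces of the prescribed linear equivalence classes, and checking the no-two-common-zeros condition for the recovered $\beta_i$. All three should yield to genericity arguments that exploit the ampleness of $\cA$ and the freedom to take $l$ arbitrarily large, but the bookkeeping---especially the simultaneous control of vanishing loci of the many interlocking sections---constitutes the main technical content of the proof.
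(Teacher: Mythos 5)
Your implication \ref{iquad}$\Rightarrow$\ref{iiiquad} is essentially the paper's argument, with one slip: to arrange that the last coordinate avoids the zeros of $\sigma$ and of $\alpha$, you act by $O(\langle b_1,\dots,b_{r-2}\rangle)\times O_2$, but this group preserves the two blocks, so the new $\gamma_{r-1}$ is a combination of the old $\gamma_{r-1},\gamma_r$ only; if both of these vanish at a zero $x$ of $\sigma$ (which can happen whenever $\langle b_1,\dots,b_{r-2}\rangle$ is isotropic over $\kappa(x)$), no element of this group helps. The paper instead acts by a general element of the special orthogonal group of the \emph{whole} form $\langle b_1,\dots,b_{r-2}\rangle\perp h$, which is $k$-unirational and hence has Zariski-dense $k$-points; the new last coordinate is then a generic combination of all $r$ sections, which by hypothesis have no common zero.

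The genuine gap is in \ref{iiiquad}$\Rightarrow$\ref{iiquad}, at exactly the step you flag as the ``principal obstacle'' and hope to settle by genericity: splitting the residual divisor $D'$ of $\rho=\sigma\alpha^2-\sum b_i\gamma_i^2$ as $E+E'$ with $E$, $E'$ effective and lying in the two prescribed linear equivalence classes. Genericity works \emph{against} you here. The effective divisors in $|\cN_1\otimes\cN_2|$ that decompose as a sum of members of $|\cN_1|$ and $|\cN_2|$ form the image of the addition map $|\cN_1|\times|\cN_2|\to|\cN_1\otimes\cN_2|$, which for large degrees has codimension at least the genus of $C$; so on a curve of positive genus a general member of the linear system admits no such splitting, and the ``abundance'' of effective divisors in each class is irrelevant. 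Your interpolation construction of $\alpha,\gamma_1,\dots,\gamma_{r-2}$ gives no control on the class decomposition of $D'$, and producing the factorization $\rho=\gamma_{r-1}\gamma_r$ is precisely the nontrivial content of the implication, so this step assumes what is to be proved. (A secondary instance of the same problem: moving $\Delta$ to a general reduced effective member of its linear system will not keep its support in the representation locus of $\sigma$; the paper modifies $\Delta$ by \emph{twice} a divisor, which changes $\cP$ by a square and keeps the new support inside the old one.) The paper's route avoids the splitting issue entirely: condition \ref{iiiquad} yields a section of $Y\subset\P'$ over $\Delta$, which by ampleness of $\cA$ extends to a section $s'$ of $\P'\to C$ meeting $Y$ transversally exactly along $s'(\Delta)$; under the identification of the blow-up of $Y$ in $\P'$ with the blow-up of $\iota(C)$ in $X$, the strict transform of $s'(C)$ becomes a section of $f|_X:X\to C$, i.e.\ a solution of the equation, and the factorization together with the line-bundle bookkeeping comes for free from Lemma \ref{lemlb}.
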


\begin{proof}
We use the notation of \S\ref{parquadrics}. 
Solutions to 
\begin{equation}
\label{alphabeta}
\sigma\alpha^2=\sum_{i=1}^ra_i\beta_{i}^2
\end{equation}
with $\alpha\in H^0(C,\cM\otimes\cN^{\otimes 2})$ and $\beta_{i}\in H^0(C,\cL\otimes\cM\otimes\cN^{\otimes 2})$ without common zero are in bijection with solutions to 
\begin{equation}
\label{alphagamma}
\sigma\alpha^2=\sum_{i=1}^{r-2}b_i\gamma_{i}^2+\gamma_{r-1}\gamma_r
\end{equation}
with $\alpha\in H^0(C,\cM\otimes\cN^{\otimes 2})$ and $\gamma_{i}\in H^0(C,\cL\otimes\cM\otimes\cN^{\otimes 2})$ without common zero (making use of an isomorphism $\langle a_1,\dots a_r\rangle\simeq \langle b_1,\dots, b_{r-2}\rangle\perp h$).
In turn, they are in bijection with sections $s:C\to X$ of $f|_X:X\to C$ with $s^*\cO_{\P}(1)\simeq\cL\otimes\cM\otimes\cN^{\otimes 2}$.

That \ref{iiquad} implies \ref{iquad} is clear.
Assume that \ref{iquad} holds. Let  $\alpha$ and $\beta_i$ be as in~\ref{iquad} for some $\cN\in\Pic(C)$.
Consider the associated $\alpha$ and $\gamma_i$ as in (\ref{alphagamma}). As the $\beta_i$ do not have a common zero, neither do the $\gamma_i$.
Consequently, after applying a general element of the special orthogonal group of $\langle b_1,\dots, b_{r-2}\rangle\perp h$ (using that this group is $k$\nobreakdash-unirational, 
see \cite[V, Theorem 18.2]{Borel}, and that $k$ is infinite), one can assume that $\gamma_r$ does not vanish on any zero of $\sigma$ or of $\alpha$.
Then~$\Delta:=\{\gamma_r=0\}$ satisfies \ref{iiiquad} with $\cP=\cN$, as equation~(\ref{alphagamma}) shows.

To conclude, we show that \ref{iiiquad} implies \ref{iiquad}.
Let $\cP$ and $\Delta$ be as in \ref{iiiquad}. After modifying $\Delta$ by a multiple of $2$ (and $\cP$ by a square), we may assume that $\Delta$ is effective and reduced. Then \ref{iiiquad} shows the existence of a section $s'_{\Delta}:\Delta\to Y$ of~$q:\P'\to C$ over $\Delta$. This section corresponds to a surjection 
\begin{equation}
\label{surjection}
(\cL\oplus\cO_C^{\oplus r})|_{\Delta}\to\cO_{\Delta}\simeq (\cL\otimes \cM\otimes \cP\otimes \cA^{\otimes l})|_{\Delta},
\end{equation}
where we chose a trivialization of $\cL\otimes \cM\otimes \cP\otimes \cA^{\otimes l}$ on $\Delta$. 
The ampleness of $\cA$ implies that, for $l$ large enough, one can lift (\ref{surjection}) to a surjection 
\begin{equation}
\label{lift}
\cL\oplus\cO_C^{\oplus r}\to\cL\otimes \cM\otimes \cP\otimes \cA^{\otimes l},
\end{equation}
corresponding to a section $s':C\to\P'$ of $g$ with $(s')^*\cO_{\P'}(1)\simeq\cL\otimes \cM\otimes \cP\otimes \cA^{\otimes l}$. For $l$ large enough, choosing the lift (\ref{lift}) general ensures that $s'(C)$ meets $Y$ transversally along $s'(\Delta)$ (and at no other point), and that $s'(C)\not\subset\{U_0=0\}\cup\{U_{r-1}=0\}$.

Let $\ws:C\to\wX$ be the strict transform of $s'$, and define $s:=\ws\circ p:C\to X$, so~$s$ is a section of $f|_X$. As $s'(C)$ meets $Y$ transversally along $s'(\Delta)$ and at no other point, we see that $\ws(C)$ meets $F$  transversally along $\ws(\Delta)$ and at no other point, and hence that
$\ws\hspace{.09em}^*\cO_{\wX}(F)=\cO_C(\Delta)\simeq\cL\otimes\cM\otimes\cP^{\otimes 2}$. It then follows from the second equality of Lemma \ref{lemlb} (also using $(s')^*\cO_{\P'}(1)\simeq\cL\otimes \cM\otimes \cP\otimes \cA^{\otimes l}$), that~$\ws\hspace{.09em}^*\cO_{\wX}(E)\simeq\cP^{\otimes -1}\otimes\cA^{\otimes l}$. The second equality of Lemma \ref{lemlb} now implies that~$s^*(\cO_{\P}(1)|_X)\simeq\cL\otimes\cM\otimes\cA^{\otimes 2l}$. As was explained as the beginning of the proof, the section $s$ corresponds to a solution $(\alpha, (\beta_i)_{1\leq i\leq r})$ to (\ref{alphabeta}) with $\cN=\cA^{\otimes l}$.

As we ensured that  $s'(C)\not\subset\{U_0=0\}\cup\{U_{r-1}=0\}$, one has~$\alpha\neq 0$.  The~$\beta_i$ do not have a common zero (as $\alpha$ cannot vanish on such a common zero, equation~(\ref{alphabeta}) shows that it would be a multiple zero of $\sigma$).
Consequently, after applying a general element of the special orthogonal group of $\langle a_1,\dots, a_{r}\rangle$ (using that this group is $k$\nobreakdash-unirational, see \cite[V, Theorem 18.2]{Borel}, and that $k$ is infinite), one can assume that no two of the $\beta_i$ have a common zero. This completes the proof of \ref{iiquad}.
\end{proof}

\subsection{The definite case}

The next proposition will serve as a substitute to Proposition \ref{condition2prop} for definite quadratic forms over the reals. For a connected smooth projective curve $C$ over $\R$, we let $\cl_{\R}:\Pic(C)\to H^1(C(\R),\Z/2)$ denote the Borel--Haefliger cycle class map defined as in \S\ref{parth2}.

\begin{prop}
\label{condition1prop}
Fix $r\geq 2$. Let~$C$ be a connected smooth projective curve over~$\R$. Fix $\cL$, $\cM$ 
 and $\cA$ in $\Pic(C)$, with $\cA$ ample. For~${\sigma\in H^0(C,\cL^{\otimes 2})}$ nonzero with reduced zero locus, the following assertions are \mbox{equivalent}.
\begin{enumerate}[label=(\roman*)] 
\item
\label{iquaR}
For some $\cN\in\Pic(C)$, there exist a nonzero section $\alpha\in H^0(C,\cM\otimes\cN^{\otimes 2})$, and sections $\beta_{i}\in H^0(C,\cL\otimes\cM\otimes\cN^{\otimes 2})$ without a common real zero, such that~$\sigma\alpha^2=\sum_{i=1}^r\beta_{i}^2$.
 \item
 \label{iiquaR}
 Assertion \ref{iquad} holds with $\cN=\cA^{\otimes l}$ for all large enough integers $l$.
 \item
 \label{iiiquaR}
The section $\sigma$ is nonnegative and $\cl_{\R}(\cM)=0$.
\end{enumerate}
\end{prop}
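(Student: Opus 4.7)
My plan is first to handle the easy implications, then the main one. The implication (ii) $\Rightarrow$ (i) is immediate. For (i) $\Rightarrow$ (iii), evaluating at a real point $x$ in the one-dimensional real space $(\cL\otimes\cM\otimes\cN^{\otimes 2})|_x^{\otimes 2}$: the ``no common real zero'' hypothesis forces $\alpha(x)\ne 0$ (else $\sum_i\beta_i(x)^2=0$ implies all $\beta_i(x)=0$), so dividing by $\alpha(x)^2$ shows that $\sigma(x)\in(\cL|_x)^{\otimes 2}$ is a sum of squares, hence nonnegative. Since $\alpha$ is then nowhere-vanishing on $C(\R)$ and $\cl_\R(\cN^{\otimes 2})=0$, one concludes $\cl_\R(\cM)=\cl_\R(\cM\otimes\cN^{\otimes 2})=0$.

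For the main implication (iii) $\Rightarrow$ (ii), I would proceed in three stages. First, since $\sigma$ has reduced zero locus and is nonnegative, it can have no real zeros (a simple real zero would produce a sign change), so $\sigma>0$ on $C(\R)$. Second, the vanishing $\cl_\R(\cM\otimes\cA^{\otimes 2l})=\cl_\R(\cM)=0$, combined with the ampleness of $\cA$, allows me---for all $l$ sufficiently large, by a standard argument in real algebraic geometry of curves---to pick $\alpha\in H^0(C,\cM\otimes\cA^{\otimes 2l})$ without real zeros, so that $\sigma\alpha^2>0$ everywhere on $C(\R)$. Setting $\cE:=\cL\otimes\cM\otimes\cA^{\otimes 2l}$, it then suffices to find $\beta_1,\beta_2\in H^0(C,\cE)$ with $\beta_1^2+\beta_2^2=\sigma\alpha^2$ and to take $\beta_3=\cdots=\beta_r=0$: the ``no common real zero'' condition is automatic, since $\sigma\alpha^2>0$ on $C(\R)$ forces $\beta_1(x)^2+\beta_2(x)^2>0$ at every real $x$.

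Third, I would construct $\beta_1,\beta_2$ via complexification. It suffices to produce $\gamma\in H^0(C_\C,\cE_\C)$ with $\gamma\cdot\bar\gamma=\sigma\alpha^2$, for then $\beta_1:=(\gamma+\bar\gamma)/2$ and $\beta_2:=(\gamma-\bar\gamma)/(2i)$ are real sections satisfying the equation. Existence of $\gamma$ is equivalent to producing an effective divisor $D$ on $C_\C$ with $D+\bar D=\ddiv_{C_\C}(\sigma\alpha^2)$ and $\cO_{C_\C}(D)\simeq\cE_\C$. Since $\sigma$ and $\alpha$ have no real zeros, $\ddiv_{C_\C}(\sigma\alpha^2)$ splits into conjugate pairs of complex points, and the admissible choices of $D$ form a coset in $\Pic(C_\C)$ of the subgroup generated by the anti-invariant classes $p-\bar p$ (for $p$ a complex zero of $\sigma\alpha^2$). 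The target element $[\cE_\C]-[D_0]$ (for a reference $D_0$) lies in the anti-invariant part of $\Pic^0(C_\C)$; for $l$ large, varying $\alpha$ within the Zariski-open set of sections without real zeros yields enough classes $q-\bar q$ (from the many complex zeros $q$ of $\alpha$) to span this anti-invariant part, allowing $[\cO_{C_\C}(D)]=[\cE_\C]$. This last surjectivity---the Picard-theoretic core of the argument---is the principal obstacle, and I expect it to follow by combining the Abel--Jacobi-type density of $q\mapsto q-\bar q$ with the large-degree freedom provided by $l\gg 0$.
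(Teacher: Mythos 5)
Your treatment of (ii)$\Rightarrow$(i) and (i)$\Rightarrow$(iii) is correct and essentially identical to the paper's. For (iii)$\Rightarrow$(ii) you take a genuinely different route, and it is there that the argument breaks down. The paper avoids your ``Picard-theoretic core'' entirely: it invokes Witt's theorem that a nonnegative element of $\R(C)$ is a sum of two squares of \emph{rational functions} $g,h$, invokes Witt's other theorem to get a rational section $\delta$ of $\cM$ with no real zeros or poles, and only \emph{then} builds $\alpha$ (as $\delta$ times a sum of squares $\varepsilon=\sum\varepsilon_i^2$ of sections of $\cA^{\otimes l}(-D)$ chosen to absorb the poles of $g\delta\tau$ and $h\delta\tau$). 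Because $\alpha$ is chosen \emph{after} $g$ and $h$, no control of divisor classes on $C_\C$ is ever needed. You fix $\alpha$ first and then try to realize $\sigma\alpha^2$ as a norm $\gamma\bar\gamma$ with $\gamma$ a \emph{global section} of $\cE_\C$ of prescribed class; this ordering is what creates the obstruction you then fail to remove.

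Concretely, the final surjectivity claim is both misstated and false in general. First, the admissible $D$ with $D+\oD=\ddiv(\sigma\alpha^2)$ do \emph{not} form a full coset of the subgroup generated by the classes $p-\bar p$: at each conjugate pair the multiplicities you may assign are bounded by those in $\ddiv(\sigma\alpha^2)$, so only bounded combinations occur. Second, and more seriously, the subgroup of $\Pic^0(C_\C)$ generated by \emph{all} classes $q-\bar q$ is the image of $1-s$ (where $s$ is complex conjugation), which has index $2^{c-1}$ in the anti-invariant part $\ker(1+s)$ when $C(\R)$ has $c$ components; the quotient is $H^1(\Gal(\C/\R),\Pic^0(C_\C))$. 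So ``enough classes $q-\bar q$ to span this anti-invariant part'' is unavailable whenever $C(\R)$ is disconnected. Whether your target class $\xi=[\cE_\C]-[D_0]$ dies in this $H^1$ is exactly the kind of obstruction that the hypothesis $\cl_{\R}(\cM)=0$ must be used to kill --- but in your write-up that hypothesis has already been spent on choosing $\alpha$ without real zeros, and your third stage never touches it. As it stands the key step is an unproved expectation resting on an incorrect heuristic; the simplest repair is to revert to Witt's function-field statement and clear denominators as the paper does.
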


\begin{proof}
It is obvious that \ref{iiquaR} implies \ref{iquaR}. Assume that \ref{iquaR} holds. The nonnegativity of $\sigma$ is clear from the equation $\sigma\alpha^2=\sum_{i=1}^r\beta_{i}^2$, as $\alpha\neq 0$. Suppose for contradiction that $\cl_{\R}(\cM)=\cl_{\R}(\cM\otimes\cN^{\otimes 2})\neq 0$. Then $\alpha\in H^0(C,\cM\otimes\cN^{\otimes 2})$ has at least one real zero. At such a real zero, the equation $\sigma\alpha^2=\sum_{i=1}^r\beta_{i}^2$ shows that all the~$\beta_i$ must vanish. This contradicts our hypothesis and proves \ref{iiiquaR}.

Assume now that \ref{iiiquaR} holds. Let $\tau$ be a nonzero rational section of $\cL$. The rational function ${f:=\frac{\sigma}{\tau^2}\in\R(C)}$ is nonnegative. By a theorem of Witt (see \cite[I~p.\,4]{Witt}), one can therefore write $f=g^2+h^2$ with $g,h\in\R(C)$. By another theorem of Witt (see \cite[III~p.\,4]{Witt}), the hypothesis that $\cl_{\R}(\cM)=0$ implies that $\cM$ has a nonzero rational section $\delta$ with no real zero and no real pole.

Then $\sigma\delta^2=(g\delta\tau)^2+(h\delta\tau)^2$.
As the rational section~$\sigma\delta^2$ of $\cL^{\otimes 2}\otimes \cM^{\otimes 2}$ has no real poles, neither $g\delta\tau$ nor $h\delta\tau$ have real poles. One can therefore choose an effective divisor~$D$ on $C$, whose support has no real points, with~${D+\ddiv(g\delta\tau)}$ and~${D+\ddiv(h\delta\tau)}$ effective. Choose $l$ large enough, so $\cA^{\otimes l}(-D)$ is globally generated. Let~$(\varepsilon_i)_{1\leq i\leq n}$ be a basis of $H^0(C,\cA^{\otimes l}(-D))$. Then $\varepsilon:=\sum_{i=1}^n\varepsilon_i^2$, viewed as an element of~$H^0(C,\cA^{\otimes 2l})$ vanishing on $D$, has no real zero. After maybe changing~$\varepsilon_1$, we may assume that~$\varepsilon\neq 0$.

 Set $\alpha:=\delta\varepsilon$, and $(\beta_1,\dots, \beta_r):=(g\delta\varepsilon\tau,h\delta\varepsilon\tau,0,\dots, 0)$. These choices ensure that
 \begin{equation}
 \label{eqagain}
 \sigma\alpha^2=\sum_{i=1}^r\beta_{i}^2
 \end{equation}
 and that $\alpha\neq 0$.
 As neither $\delta$ nor $\varepsilon$ have a real zero, a common real zero of $\beta_1$ and~$\beta_2$ would have to be a multiple zero of $\sigma$ (by (\ref{eqagain})), which is impossible. So $\beta_1$ and~$\beta_2$ have no common real zero, which proves \ref{iiquaR}.
\end{proof}

\section{Curves over finite and \texorpdfstring{$p$}{p}-adic fields}
\label{secmodel}

 The aim of this section is to prove Proposition \ref{propmiracle}, which plays a crucial role in our results on Pythagoras numbers. 
 
In \S\ref{parCFT}, we use global class field theory to describe the subgroup of the Picard group of a reduced projective curve over a finite field that is generated by classes of smooth closed points of degree divisible by $d$ (see Proposition~\ref{propevenF}). In \S\ref{pareven}, we deduce an analogous result for smooth projective curves over $p$-adic fields (see Proposition~\ref{propeven}). We use it in \S\ref{parvalues} to control the values of a section of the square of line bundle on a $p$-adic curve, and in particular to prove Proposition \ref{propmiracle}.

\subsection{Line bundles on curves over finite fields}
\label{parCFT}

In this paragraph, we fix a finite field $\F_q$ of cardinality $q$ and an algebraic closure $\oF_q$ of $\F_q$. For $r\geq 1$, we let~${\F_q\subset \F_{q^r}\subset\oF_q}$ be the subextension that is of degree $r$ over $\F_q$.
We first recall a standard consequence of the Lang--Weil estimates.

\begin{lem}
\label{lemLW}
Let $X$ be a geometrically integral variety of dimension $n\geq 1$ over~$\F_q$. For~$r\geq 1$, let $M_X(r)$ be the number of closed points of $X$ that are of degree~$r$ over~$\F_q$. Then there exists $K\geq 0$ such that
\begin{equation}
\label{LWclosed}
|M_X(r)-\frac{q^{nr}}{r}|\leq K\,\frac{q^{(n-\frac{1}{2})r}}{r}\textrm{\hspace{1em} for all }r\geq 1.
\end{equation}
In particular, one has $M_X(r)>0$ for all large enough integers $r\geq 1$.
\end{lem}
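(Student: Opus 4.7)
The plan is to deduce this from the standard Lang--Weil estimate. Since $X$ is geometrically integral of dimension $n$ over $\F_q$, Lang--Weil provides a constant $K'\geq 0$ (depending on $X$) such that $\bigl||X(\F_{q^r})| - q^{nr}\bigr| \leq K'\, q^{(n-1/2)r}$ for all $r\geq 1$. The task is to transfer this estimate from rational points to closed points of exact degree $r$, which I would do through the identity
$$|X(\F_{q^r})| = \sum_{d\mid r} d \cdot M_X(d),$$
valid because an $\F_{q^r}$-point of $X$ is the datum of a closed point $x$ whose residue field has degree $d \mid r$, together with one of the $d$ embeddings of $\kappa(x)$ into $\F_{q^r}$.

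M\"obius inversion then yields $r\, M_X(r) = \sum_{d\mid r}\mu(r/d)\,|X(\F_{q^d})|$. I would extract the expected main term $q^{nr}$ (the $d=r$ contribution of $\sum_{d\mid r}\mu(r/d)\,q^{nd}$) by writing
$$r\, M_X(r) - q^{nr} \;=\; \sum_{\substack{d\mid r\\ d<r}}\mu(r/d)\,q^{nd} \;+\; \sum_{d\mid r}\mu(r/d)\bigl(|X(\F_{q^d})| - q^{nd}\bigr).$$
The first sum is bounded, using $|\mu|\leq 1$, by a geometric series indexed by $d\leq \lfloor r/2\rfloor$, hence is $O(q^{nr/2})$; since $n\geq 1$, this is dominated by $q^{(n-1/2)r}$. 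The second sum is bounded via Lang--Weil by $K'\sum_{d=1}^{r}q^{(n-1/2)d}$, another geometric series that sums to $O(q^{(n-1/2)r})$ because $q^{n-1/2}>1$. Combining both bounds and dividing by $r$ gives (\ref{LWclosed}) for a suitable $K\geq 0$.

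For the final assertion, as soon as $r$ is large enough that $q^{r/2}>K$, the estimate (\ref{LWclosed}) forces
$$M_X(r) \;\geq\; \frac{q^{(n-1/2)r}\bigl(q^{r/2}-K\bigr)}{r} \;>\; 0.$$
The whole argument is a routine combination of Lang--Weil with M\"obius inversion, and the only mild obstacle is checking that both geometric-series constants can be chosen independently of $r$; this is precisely where the hypothesis $n\geq 1$ enters, at both steps of the error analysis.
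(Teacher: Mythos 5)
Your proof is correct and follows essentially the same route as the paper: the identity $|X(\F_{q^r})|=\sum_{d\mid r}d\,M_X(d)$, M\"obius inversion, the Lang--Weil bound, and geometric-series estimates on the divisors $d\leq\lfloor r/2\rfloor$, with the hypothesis $n\geq 1$ entering exactly where you say it does. The only (immaterial) difference is bookkeeping: the paper bounds the tail $\frac{1}{r}\sum_{s\leq\lfloor r/2\rfloor}N_X(s)$ in one piece using $N_X(s)\leq(1+K')q^{ns}$, whereas you split it into a main-term sum and a Lang--Weil error sum.
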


\begin{proof}
Let $N_X(r)$ be the cardinality of $X(\F_{q^r})$. 
One has $N_X(r)=\sum_{s|r}sM_X(s)$. The M\" obius inversion formula (for which see \eg \cite[Theorem~2.9]{Apostol}) shows that
\begin{equation}
\label{Mobiusformula}
M_X(r)=\frac{1}{r}\sum_{s\mid r}\mu(\frac{r}{s})N_X(s),
\end{equation}
where $\mu$ is the M\" obius function.
On the other hand, it follows from the Lang--Weil estimates \cite[Theorem 1]{LangWeil} that there exists a constant $K'\geq 0$ such that 
\begin{equation}
\label{LW}
|N_X(r)-q^{nr}|\leq K'\,q^{(n-\frac{1}{2})r}
\end{equation}
for all $r\geq 1$. We deduce from (\ref{Mobiusformula}) and (\ref{LW}) that
\begin{equation}
\label{Mobius}
|M_X(r)-\frac{q^{nr}}{r}|\leq K'\,\frac{q^{(n-\frac{1}{2})r}}{r}+\frac{1}{r}\sum_{s\leq \lfloor\frac{r}{2}\rfloor}N_X(s).
\end{equation}
In view of (\ref{LW}), one has $N_X(r)\leq (1+K')\,q^{nr}$ for all $r\geq 1$. We deduce at once that~$\sum_{s\leq\lfloor\frac{r}{2}\rfloor}N_X(s)\leq (1+K')\,q^{n(\frac{r}{2}+1)}$. Combining this fact with (\ref{Mobius}) yields the estimate (\ref{LWclosed}) for an appropriate constant $K$.
\end{proof}

Let $X$ be a connected smooth projective curve over a finite field $\F_q$.
Let $D\subset X$ be an effective divisor.
We define~$\Pic(X,D)$ to be the set of isomorphism classes of pairs~$(\cL,\varphi)$, where $\cL$ is a line bundle on $X$ and~${\varphi:\cO_D\isoto\cL|_D}$ is a trivialization of~$\cL$ on $D$.
We endow~$\Pic(X,D)$ with the group structure induced by tensor product. 
With any closed point~${x\in X\setminus D}$, we associate the class~${[x]\in\Pic(X,D)}$ of the line bundle~$\cO_X(x)$ equipped with the restriction to $D$ of its canonical trivialization on $X\setminus \{x\}$.
For any~${d\geq 0}$, we let~$\Pic(X,D)^d$ be the subgroup of~$\Pic(X,D)$ consisting of those~$(\cL,\varphi)$ such that  the degree of $\cL$ over $\F_q$ is a multiple of $d$.
As there are only finitely many isomorphism classes of degree $0$ line bundles on $X$, the group~$\Pic(X,D)^0$ is finite.

Let $\pi_1^{\ab}(X\setminus D)$ be the abelianization of the \'etale fundamental group of $X\setminus D$, and let $\pi_1^{\ab}(X,D)$ be its quotient classifying those abelian coverings whose ramification is bounded by $D$ in the sense of \cite[(8.6)]{BKS}. The pushforward by the structural morphism~$X\setminus D\to\Spec(\F_q)$ yields a morphism $\deg:\pi_1^{\ab}(X,D)\to \Gal(\oF_q/\F_q)\simeq\widehat{\Z}$ with kernel~$\pi_1^{\ab}(X,D)^0$.
There is a commutative diagram with exact rows
\begin{align*}
\begin{aligned}
\xymatrix@R=3ex{
0 \ar[r] & \Pic(X,D)^0 \ar[r] \ar^{\rotatebox{90}{$\sim$}}[d]& \Pic(X,D) \ar^{\hspace{1.5em}\deg}[r] \ar[d]^{\rho_{(X,D)}} & \Z\ar[d]\\
0 \ar[r] & \pi_1^{\ab}(X,D)^0\ar[r] & \pi_1^{\ab}(X,D) \ar^{\hspace{1.5em}\deg}[r] & \widehat{\Z}
}
\end{aligned}
\end{align*}
whose middle vertical arrow is the reciprocity map defined by $\rho_{(X,D)}([x])=\Frob_x$ (where $\Frob_x$ is the Frobenius of the closed point $x\in X\setminus D$), whose left vertical arrow is an isomorphism, and whose right vertical arrow is the canonical inclusion. The above assertion combines the main theorems of global class field theory for function fields, first proven by Hasse and Witt \cite{WittCFT}; it appears exactly in the above form \eg in \cite[(8.8), Lemma 8.4, Theorem 8.5]{BKS}.

\begin{lem}
\label{lemCFT}
Let $X$ be a connected smooth projective curve over $\F_q$. Let~$D\subset X$ be an effective divisor. For any $d\geq 1$, the group $\Pic(X,D)^d$ is gen\-er\-at\-ed by the classes of those closed points of $X\setminus D$ whose degree over $\F_q$ is a multiple~of~$d$.
\end{lem}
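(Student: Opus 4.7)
The plan is to let $G \subset \Pic(X,D)$ denote the subgroup generated by $[x]$ for $x$ a closed point of $X\setminus D$ with $d\mid \deg(x)$. Then $G\subset \Pic(X,D)^d$, and we aim for equality. This will be split into showing (a) $\deg(G)$ equals the image of $\deg:\Pic(X,D)^d\to\Z$, and (b) $\Pic(X,D)^0\subset G$; together these yield $G=\Pic(X,D)^d$, since the image of $\deg|_{\Pic(X,D)^d}$ is torsion-free. Step (a) is immediate from Lemma~\ref{lemLW}: after removing the finitely many points of $D$, one still finds closed points of every sufficiently large degree in $X\setminus D$, and taking two of degrees $Nd$ and $(N+1)d$ yields elements of $G$ whose degrees have gcd $d$.

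The bulk of the work lies in step (b). Set $\Pi:=\pi_1^{\ab}(X,D)$. Via the reciprocity isomorphism on degree-zero parts, (b) translates to $G':=\rho(G)\cap \Pi^0=\Pi^0$. Since $\Pi^0$ is a finite abelian group, Pontryagin duality reduces this to showing that every character $\chi:\Pi^0\to\mu_n$ vanishing on $G'$ is trivial. Given such $\chi$, the short exact sequence $0\to\Pi^0\to\Pi\to\widehat{\Z}\to 0$ splits (as $\widehat{\Z}$ is profinite-free of rank one), so $\chi$ extends to a continuous character $\tilde\chi:\Pi\to\mu_n$. Fix a closed $x_0\in X\setminus D$ with $\deg(x_0)=d$ (which exists by Lemma~\ref{lemLW}) and set $\eta:=\tilde\chi(\Frob_{x_0})$. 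For every closed $x\in X\setminus D$ with $d\mid\deg(x)$, the element $\Frob_x-(\deg(x)/d)\,\Frob_{x_0}$ lies in $G'$, so $\tilde\chi$ vanishes on it, giving $\tilde\chi(\Frob_x)=\eta^{\deg(x)/d}$. Choosing $\omega\in\mu_{nd}$ with $\omega^d=\eta$ and defining the modified character $\tilde\chi':\Pi\to\mu_{nd}$ by $\tilde\chi'(\gamma):=\tilde\chi(\gamma)\,\omega^{-\deg(\gamma)}$, one obtains $\tilde\chi'(\Frob_x)=1$ for every closed $x\in X\setminus D$ with $d\mid\deg(x)$, while $\tilde\chi'|_{\Pi^0}=\chi$.

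The conclusion will then come from the Chebotarev density theorem applied to the finite abelian \'etale cover of $X\setminus D$ (with ramification bounded by $D$) corresponding to the continuous quotient $\Phi:=(\tilde\chi',\deg\bmod d):\Pi\to\mu_{nd}\times\Z/d\Z$: the images $\Phi(\Frob_x)$ of Frobenii of closed points of $X\setminus D$ are dense in the image $R$ of $\Phi$. By construction every $\Frob_x$ with $d\mid\deg(x)$ maps to $(1,0)$, so $R\cap(\mu_{nd}\times\{0\})=\{(1,0)\}$. But this intersection equals $\tilde\chi'(\Pi^d)\times\{0\}$, which contains $\tilde\chi'(\Pi^0)\times\{0\}=\chi(\Pi^0)\times\{0\}$; hence $\chi(\Pi^0)=\{1\}$ and $\chi$ is trivial. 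The hard part is the preceding setup of $\tilde\chi'$: one must enlarge the target from $\mu_n$ to $\mu_{nd}$ and twist by a pullback from $\widehat{\Z}$ so that $\tilde\chi'$ annihilates the Frobenii of all closed points with $d\mid\deg(x)$ without killing $\chi$ on $\Pi^0$. Once that is in place, Chebotarev cleanly delivers the contradiction.
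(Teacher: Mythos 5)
Your proposal is essentially correct but takes a genuinely different route from the paper's at the decisive step. Both arguments share the same skeleton: reduce to showing that the degree-zero part of $G$ exhausts $\Pic(X,D)^0$, and transport the problem to $\Pi=\pi_1^{\ab}(X,D)$ via the reciprocity isomorphism. The paper then identifies the closure $H_d$ of $\rho(G)$ with the subgroup corresponding to the constant-field extension $X_{\F_{q^d}}\to X$ by a direct point count: closed points of degree $rd$ split completely in the cover $\whX_d$ attached to $H_d$, and comparing the resulting lower bound $\delta_d\,q^{rd}/(rd)$ with the Lang--Weil count $q^{rd}/r$ for $\whX_d$ over $\F_{q^d}$ forces $\delta_d\leq d$. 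You instead dualize: you extend a character $\chi$ of $\Pi^0$ killing $G'$ to all of $\Pi$ (using that $\widehat{\Z}$ is projective), twist it by a power of $\deg$ so that it annihilates the Frobenii of all closed points of degree divisible by $d$, and invoke Chebotarev to conclude that it annihilates $\deg^{-1}(d\widehat{\Z})\supset\Pi^0$. Your route is shorter but uses the Chebotarev density theorem for global function fields as a black box, whereas the paper deliberately gets by with Lang--Weil alone, in effect reproving the instance of Chebotarev it needs; the two are of comparable depth, and your final deduction from $R\cap(\mu_{nd}\times\{0\})=\{(1,0)\}$ is correct.

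Two small repairs are needed. First, Lemma~\ref{lemLW} requires $X$ to be \emph{geometrically} integral, so you must begin, as the paper does, by replacing $\F_q$ with its algebraic closure in $X$; otherwise every closed point has degree divisible by the constant-field degree and your step (a) fails as stated. Second, even after that reduction, Lemma~\ref{lemLW} only produces closed points of all \emph{sufficiently large} degrees, so a point $x_0$ of degree exactly $d$ need not exist. This is harmless: replace $\Frob_{x_0}$ throughout by $\rho(g_0)$ for any $g_0\in G$ with $\deg(g_0)=d$, which exists by your step (a); the identity $\tilde\chi(\Frob_x)=\eta^{\deg(x)/d}$ with $\eta:=\tilde\chi(\rho(g_0))$ and the construction of $\tilde\chi'$ then go through verbatim.
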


\begin{proof}
After replacing $\F_q$ with its algebraic closure in $X$, we may assume that $X$ is geometrically connected. Let $G_d\subset \Pic(X,D)^d$ be the subgroup generated by 
the classes of those closed points of $X\setminus D$ whose degree over $\F_q$ is a multiple~of~$d$.
Consider the finite group $G_d^0:=G_d\cap\Pic(X,D)^0$. It follows from Lemma \ref{lemLW} that~$\deg(G_d)=d\mkern 1mu\Z$, so there is a short exact sequence $0\to G_d^0\to G_d\xrightarrow{\deg}d\mkern 1mu\Z\to 0$.

Define $H_d^0:=\rho_{(X,D)}(G_d^0)$ and let $H_d\subset \pi_1^{\ab}(X,D)$ be the closure of $\rho_{(X,D)}(G_d)$. One then has a short exact sequence $0\to H_d^0\to H_d\xrightarrow{\deg}d\mkern 1mu\widehat{\Z}\to 0$. As $H_d$ is a closed subgroup of finite index in $\pi_1^{\ab}(X,D)$, it corresponds to an abelian connected finite \'etale cover $\mu_d:\whX_d\to X$. The degree $\delta_d$ of $\mu_d$ equals the index of $H_d$ in~$\pi_1^{\ab}(X,D)$. Since~$\deg(H_d)=d\mkern 1mu\widehat{\Z}$, the algebraic closure of $\F_q$ in $\whX_d$ is isomorphic to~$\F_{q^d}$.
It follows that $\mu_d$ factors as a composition $\whX_d\to X_{\F_{q^d}}\to X$.

By Lemma \ref{lemLW}, the number of closed points of $X$ of degree $rd$ over $\F_q$ grows as~$\frac{q^{rd}}{rd}$ when $r$ goes to infinity. For any such point $x\in X$, one has $\Frob_x\in H_d$ by definition of $H_d$. This means that $x$ splits completely in $\whX_d$, \ie that the preimage of $x$ in $\whX_d$ consists of $\delta_d$ points of degree $rd$ over $\F_q$. Consequently, the number of points of $\whX_d$ of degree $rd$ over $\F_q$ grows at least as~$\delta_d\frac{q^{rd}}{rd}$ when $r$ goes to infinity. On the other hand, it follows from Lemma \ref{lemLW} (applied to the variety $\whX_d$ over $\F_{q^d}$) that this number grows as $\frac{q^{rd}}{r}$ when $r$ goes to infinity. We deduce that $\delta_d\leq d$.

The morphism $\whX_d\to X_{\F_{q^d}}$
must therefore be an isomorphism (as the degrees of the connected finite \'etale covers 
$\whX_d$ and $X_{\F_{q^d}}$ of $X$ 
are $\delta_d$ and $d$). They are thus associated with the same finite index subgroup of $\pi_1^{\ab}(X,D)$. Taking the inverse image of these subgroups by $\rho_{(X,D)}$ yields the desired equality $G_d=\Pic(X,D)^d$.
\end{proof}

Let $X$ be a reduced projective curve over $\F_q$. Let $(X_i)_{1\leq i\leq m}$ be the irreducible components of $X$. 
We define $\Pic(X)^d\subset \Pic(X)$ to be the subgroup of line bundles~$\cL$ on $X$ such that the degree of $\cL|_{X_i}$ over $\F_q$ is divisible by $d$ for $1\leq i\leq m$.

\begin{prop}
\label{propevenF}
Let $X$ be a reduced projective curve over $\F_q$. Let $\Xi\subset X$ be a finite subset of closed points.
Fix $d\geq 1$. Then~$\Pic(X)^{d}$ is generated by the classes of those smooth closed points of $X\setminus\Xi$ whose degree over $\F_q$ is a multiple of $d$. 
\end{prop}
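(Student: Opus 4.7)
The plan is to pass to the normalization and reduce to Lemma \ref{lemCFT} component by component. Enlarging $\Xi$ only strengthens the conclusion (the candidate set of generators shrinks, while $\Pic(X)^d$ is unchanged), so we may assume $\Xi \supset X_{\sing}$. Let $\nu : \widetilde X \to X$ be the normalization, let $(\widetilde X_i)_{1 \leq i \leq m}$ be its connected components (each a connected smooth projective curve over $\F_q$ mapping birationally to an irreducible component $X_i$ of $X$), and set $\widetilde\Xi_i := \nu^{-1}(\Xi) \cap \widetilde X_i$. Since distinct irreducible components of $X$ can meet only at singular points of $X$, and any smooth point of $X$ is also a smooth point of the unique component through it, $\nu$ restricts to an isomorphism $\widetilde X \setminus \nu^{-1}(\Xi) \isoto X \setminus \Xi$, under which closed points of $\coprod_i (\widetilde X_i \setminus \widetilde\Xi_i)$ of degree divisible by $d$ correspond bijectively (and degree-preservingly) to smooth closed points of $X \setminus \Xi$ with the same divisibility property.

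The main step, and the main obstacle, is to introduce the rigidified Picard group $\Pic(X, \Xi)$ of isomorphism classes of pairs $(\cL, \varphi)$ with $\cL \in \Pic(X)$ and $\varphi : \cO_\Xi \isoto \cL|_\Xi$, and to establish a natural isomorphism
\[
\nu^* : \Pic(X, \Xi) \isoto \prod_{i=1}^m \Pic(\widetilde X_i, \widetilde\Xi_i).
\]
The assumption $\Xi \supset X_{\sing}$ is decisive here. Given a family $((\cL_i, \varphi_i))_i$ on the right-hand side, one descends it to a line bundle on $X$ by transporting the $\cL_i$ via $\nu$ on $X \setminus X_{\sing}$, and by using, at each singular point $p \in X_{\sing} \subset \Xi$, the trivializations $\varphi_i$ at the points of $\widetilde\Xi_i$ lying above $p$ as gluing data; the restrictions of the $\varphi_i$ away from $\nu^{-1}(X_{\sing})$ descend, via $\nu$, to a trivialization $\varphi$ on $\Xi$. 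Injectivity is direct: a class in the kernel of $\nu^*$ produces a trivialization of $\cL$ on all of $\widetilde X$ that agrees with $\varphi$ on $\Xi \supset X_{\sing}$, so no nontrivial gluing remains and $(\cL, \varphi)$ is trivial. As $\nu$ is birational on each component, $\nu^*$ preserves degree componentwise, hence restricts to an isomorphism between the subgroups of classes whose degree is divisible by $d$ on every component.

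Once this isomorphism is in hand, the conclusion is immediate. By Lemma \ref{lemCFT} applied to each $(\widetilde X_i, \widetilde\Xi_i)$, the group $\Pic(\widetilde X_i, \widetilde\Xi_i)^d$ is generated by the classes $[y]$ of closed points $y \in \widetilde X_i \setminus \widetilde\Xi_i$ with $d \mid \deg(y)$. Under $(\nu^*)^{-1}$, such a class (placed in the $i$-th factor) corresponds to the class of $(\cO_X(\nu(y)), \mathrm{can})$ in $\Pic(X, \Xi)^d$, and $\nu(y)$ is a smooth closed point of $X \setminus \Xi$ with $\deg(\nu(y)) = \deg(y)$. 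Hence $\Pic(X, \Xi)^d$ is generated by the classes of such pairs. Composing with the forgetful map $\Pic(X, \Xi)^d \twoheadrightarrow \Pic(X)^d$, which is surjective because line bundles on the finite disjoint union of spectra of fields $\Xi$ are automatically trivial, yields the proposition.
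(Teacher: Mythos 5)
Your overall architecture --- enlarge $\Xi$ to contain $X_{\sing}$, rigidify the Picard group along a finite subscheme, identify it with the rigidified Picard group of the normalization, apply Lemma \ref{lemCFT} componentwise, and push down along the (surjective) forgetful map --- is essentially the paper's. But the pivotal claim, that $\nu^*:\Pic(X,\Xi)\to\prod_i\Pic(\widetilde X_i,\widetilde\Xi_i)$ is an isomorphism when $\Xi$ and $\nu^{-1}(\Xi)$ carry their \emph{reduced} structures, is false in general, and your injectivity argument is where it fails. Descent of line bundles along $\nu$ is governed by the conductor subscheme, which is non-reduced as soon as $X$ has a singularity worse than a node (already for a cusp); a trivialization agreeing only on the reduced points of $\nu^{-1}(X_{\sing})$ does not determine the gluing. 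Concretely, let $X$ be a cuspidal cubic with rational cusp $p$, so that $\nu:\P^1\to X$ has $\nu^{-1}(p)=\{\tilde p\}$ a single reduced point while the conductor is $\km_{\tilde p}^2$. Then $\Pic(X,\{p\})\simeq\Pic(X)\simeq\Z\times\G_a(\F_q)$ whereas $\Pic(\P^1,\{\tilde p\})\simeq\Z$, and the whole unipotent part $\G_a(\F_q)$ lies in $\ker(\nu^*)$. (The proposition is still true for this $X$ --- differences of smooth rational points sweep out $\G_a(\F_q)$ --- but your argument never sees that part of the group.)

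The repair is exactly what the paper does: rigidify along a divisor $D_i$ defined by an ideal $\cJ_i$ contained in the conductor $\cI_i$ of $\nu_i$, hence non-reduced in general, so that $\cJ_i$ is simultaneously an ideal sheaf in $(\nu_i)_*\cO_{\widetilde X_i}$. This yields $\cA_i\isoto(\nu_i)_*\widetilde\cA_i$ for the sheaves of units congruent to $1$ on $D_i$, and then $\Pic(X_i,D_i)\simeq\Pic(\widetilde X_i,\widetilde D_i)$ by \'etale descent and the Leray spectral sequence (the computation (\ref{Pictriv})); note that Lemma \ref{lemCFT} is deliberately stated for an arbitrary, possibly non-reduced, effective divisor so that it applies to the thickened $\widetilde D_i$. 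With that change your all-at-once product decomposition would go through (the paper instead runs a decreasing induction on components, which also organizes the gluing of $X_i$ to $X'_{i-1}$ through the compatible trivializations on $Z_i\subset D_i$), but as written your proof establishes the statement only for curves whose conductor is reduced, e.g.\ nodal curves.
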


\begin{proof}
Let $(X_i)_{1\leq i\leq m}$ be the irreducible components of $X$. For $1\leq i\leq m$, we define $X_i':=X_1\cup\dots\cup X_i\subset X$ and we endow it with its reduced structure. Fix~$\cL\in \Pic(X)^d$ and let $1\leq i\leq m+1$ be maximal such that $\cL|_{X_{i-1}'}$ is trivial. To show that $\cL$ belongs to the subgroup generated by those smooth closed points of~$X\setminus\Xi$ whose degree over $\F_q$ is a multiple of $d$, we argue by decreasing induction on $i$ (the case $i=m+1$ being trivial).

Consider the scheme-theoretic intersection $Z_i:=X_i\cap X'_{i-1}$ and the normalization morphism $\nu_i:\wX_i\to X_i$. Let $\cI_i\subset \cO_{X_i}$ be the coherent ideal sheaf defined as the annihilator of the cokernel of~$\cO_{X_i}\to(\nu_i)_*\cO_{\wX_i}$ (the \textit{conductor} of $\nu_i$).
Let $\cJ_i\subset\cI_i$ be the ideal sheaf of a finite closed subscheme of $X_i$ containing $Z_i$ and $\Xi\cap X_i$.
Since~$\cJ_i$ kills the cokernel of~$\cO_{X_i}\to(\nu_i)_*\cO_{\wX_i}$, it is also an ideal in the sheaf of rings~$(\nu_i)_*\cO_{\wX_i}$. 
Let $D_i\subset X_i$ and $\wD_i\subset\wX_i$ be the subschemes defined by $\cJ_i$. 
We get a commutative diagram:
\begin{equation}
\begin{aligned}
\label{diagconductor}
\xymatrix
@R=0.5cm 
{
0\ar[r]&\cJ_i\ar[r]\ar@{=}[d]& \cO_{X_i}\ar[r]\ar[d]&\cO_{D_i}\ar[d]\ar[r] &0\\
0 \ar[r]&\cJ_i\ar[r]&(\nu_i)_*\cO_{\wX_i}\ar[r]&  (\nu_i)_*\cO_{\wD_i} \ar[r] &0.}
\end{aligned}
\end{equation} 

Let $\Pic(X_i,D_i)$ (\resp $\Pic(\wX_i,\wD_i)$) be the groups of isomorphism classes of line bundles on $X_i$ (\resp $\wX_i$) endowed with a trivialization on $D_i$ (\resp $\wD_i$).
Let $\cA_i$ (\resp $\widetilde{\cA}_i$) be the \'etale sheaf of invertible functions on $X_i$ (\resp $\wX_i$) that are equal to $1$ on $D_i$ (\resp $\wD_i$). One computes that 
\begin{equation}
\label{Pictriv}
\begin{alignedat}{2}
\Pic(X_i,D_i)&=H^1_{\Zar}(X_i,\cA_i)=H^1_{\et}(X_i,\cA_i)=H^1_{\et}(X_i,(\nu_i)_*\widetilde{\cA}_i)\\
&=H^1_{\et}(\wX_i,\widetilde{\cA}_i)=H^1_{\Zar}(\wX_i,\widetilde{\cA}_i)=\Pic(\wX_i,\wD_i),
\end{alignedat}
\end{equation}
where the first and sixth equalities are the cocycle descriptions of Picard groups, the second and fifth equality stem from \'etale descent, the third equality holds since~$\cA_i\isoto(\nu_i)_*\widetilde{\cA}_i$ by (\ref{diagconductor}), and the fourth follows from \cite[Proposition~\href{https://stacks.math.columbia.edu/tag/03QP}{03QP}]{SP} by the Leray spectral sequence for $\nu_i$.

Fix a trivialization of $\cL$ on $X'_{i-1}$ (there exists one by our choice of $i$). Extend the induced trivialization of $\cL|_{X_i}$ on $Z_i$ to a trivialization $\varphi_i$ of $\cL|_{X_i}$ on $D_i$. Define~$(\widetilde{\cL}_i,\widetilde{\varphi}_i):=(\nu_i)^*(\cL|_{X_i},\varphi_i)\in \Pic(\wX_i,\wD_i)$.
By Lemma \ref{lemCFT}, there exists a divisor~$\wE_i$ on $\wX_i\setminus \wD_i$ whose support only consists of closed points whose degree over~$\F_q$ is a multiple of $d$, such that~$(\widetilde{\cL}_i,\widetilde{\varphi}_i)\simeq\cO_{\wX_i}(\wE_i)$ in~$\Pic(\wX_i,\wD_i)$. Consider the divisor~$E_i:=(\nu_i)_*\wE_i$ on $X_i$. As $\nu_i$ is an isomorphism above $X_i\setminus D_i$ and $\Xi\cap X_i\subset D_i$, the support of $E_i$ consists of smooth closed points of $X_i\setminus (\Xi\cap X_i)$ whose degree over~$\F_q$ is a multiple of $d$.  Using~(\ref{Pictriv}), we see that $(\cL|_{X_i},\varphi_i)\simeq\cO_{X_i}(E_i)$ in $\Pic(X_i,D_i)$.

The last equality implies that $\cL(-E_i)\in\Pic(X)^d$ admits trivializations in restriction to $X'_{i-1}$ and $X_i$ that are compatible on $Z_i=X_i\cap X'_{i-1}$ (as $Z_i\subset D_i)$. This shows that the line bundle $\cL(-E_i)|_{X'_i}$ is trivial. To conclude, one applies the induction hypothesis to the line bundle $\cL(-E_i)$.
\end{proof}

\subsection{Line bundles on curves over \texorpdfstring{$p$}{p}-adic fields}
\label{pareven}

The following proposition is an analogue of Proposition \ref{propevenF} over $p$-adic fields.

\begin{prop}
\label{propeven}
Let $k$ be a $p$-adic field with residue field $\kappa$. Let $\pi:Y\to \Spec(\cO_k)$ be a flat projective morphism of relative dimension $1$ with $Y$ regular. 
Let $(X_i)_{i\in I}$ be the reduced irreducible components, with multiplicities $(m_i)_{i\in I}$, of the special fiber~$X$ of $\pi$.
Let $Z\subset Y$ be an effective divisor. Fix~$d\geq 1$ and~$\cN\in\Pic(Y)$. Let~$\delta_i\in \Z$ be the degree of~$\cN|_{X_i}$ over $\kappa$. Assume that $d\mid m_i\delta_i$ for all $i\in I$.
Then~$\cN\simeq\cO_Y(D)$ for some divisor $D$ on $Y$ whose support is a union of integral divisors that are flat of degree divisible by $d$ over~$\Spec(\cO_k)$, and that are not included in $Z$.
\end{prop}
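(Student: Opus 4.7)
The plan is to reduce the problem to a statement on the reduced special fiber $X_{\red}$ via a per-component variant of Proposition \ref{propevenF}, then lift the resulting divisor back to $Y$ using regularity. To begin, set $d_i := d/\gcd(d, m_i)$ for each $i \in I$. Since $\gcd(d_i, m_i/\gcd(d, m_i)) = 1$ by construction, the hypothesis $d \mid m_i \delta_i$ is equivalent to $d_i \mid \delta_i$ for every $i$.

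The inductive proof of Proposition \ref{propevenF} adapts, with only cosmetic changes, to allow per-component divisibilities $(d_i)_{i\in I}$: at the step of the induction treating the component $X_i$, one invokes Lemma \ref{lemCFT} on the normalization $\widetilde{X}_i$ with the integer $d_i$ in place of $d$ (the relevant line bundle on $X_i$ at that stage has degree divisible by $d_i$). Applied to $\cN|_{X_{\red}}$, this yields a representation
\begin{equation*}
\cN|_{X_{\red}} \simeq \cO_{X_{\red}}\Big(\sum_s a_s x_s\Big),
\end{equation*}
where each $x_s$ is a smooth closed point of $X_{\red}\setminus(\mathrm{sing}(X_{\red}) \cup Z)$ belonging to a single component $X_{i(s)}$ and satisfying $d_{i(s)} \mid [\kappa(x_s):\kappa]$. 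For each such $x_s$, the regularity of $Y$ at $x_s$ (where $X_{i(s)}$ is locally cut out by some $f$, and the scheme-theoretic special fiber by $f^{m_{i(s)}}$ up to a unit) supplies a local parameter transverse to $X_{i(s)}$; a moving argument using global sections of a sufficiently ample line bundle on $Y$ then globalizes this germ to an integral horizontal divisor $H_s \subset Y$ passing through $x_s$, transverse to $X_{i(s)}$ there, disjoint from the other components of $X_{\red}$ near $x_s$, and not contained in $Z$. The local intersection computation $H_s \cdot X = m_{i(s)}\,[\kappa(x_s):\kappa]$ shows $d \mid \deg(H_s/\Spec\cO_k)$. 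Setting $D_1 := \sum_s a_s H_s$ therefore gives a divisor of the required form, and $\cP := \cN \otimes \cO_Y(-D_1)$ is trivial on $X_{\red}$.

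The main obstacle, and most delicate part of the argument, is then to represent $\cP$ itself in the required form. Since $\cP|_{X_i}$ is trivial for every $i$, $\cP$ has degree zero on each component, and $\cP|_C \in \Pic^0(C)$ where $C = Y_\eta$ is the generic fiber. The plan is to further adjust $D_1$ by adding horizontal closures of closed points of $C$ of degree divisible by $d$ so as to trivialize $\cP|_C$; thereafter $\cP$ becomes linearly equivalent to a vertical divisor $V = \sum c_i X_i$ with $V \cdot X_j = 0$ for all $j$, so Zariski's lemma (negative semidefiniteness of the intersection form on vertical divisors, with kernel $\Q\cdot X$) forces $V \in \Q\cdot X$, and the principality of $X = \sum m_i X_i \sim 0$ lets one absorb the residual finite-torsion contribution through further horizontal adjustments. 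The hardest point will be the $p$-adic arithmetic step of showing that classes of closed points of $C$ of degree divisible by $d$ generate enough of $\Pic^0(C)$ to express $\cP|_C$; this is where the bulk of the technical work is expected to lie.
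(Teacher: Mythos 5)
Your first half is sound and runs parallel to the paper's opening moves: where you prove a per-component variant of Proposition \ref{propevenF} with moduli $d_i=d/\gcd(d,m_i)$, the paper instead first subtracts one horizontal integral divisor per component passing through a smooth point of residue degree congruent to $\delta_i$ modulo $d$ (making every $\delta_i$ divisible by $d$) and then applies Proposition \ref{propevenF} with the uniform modulus $d$; either way one reaches a representation of $\cN|_{X^{\red}}$ by a divisor supported on smooth points of $X^{\red}\setminus Z$ of residue degree divisible by $d$. The genuine gap is your final step, and it is exactly the step you flag as undone. The claim that $\cP|_C$ lies in the subgroup of $\Pic(C)$ generated by closed points of the generic fiber of degree divisible by $d$ is not a routine analogue of Lemma \ref{lemCFT}: for a curve over a $p$-adic field, $\Pic^0(C)$ is (up to finite index) a compact $p$-adic analytic group, no class-field-theoretic generation statement of the finite-field type is available, and nothing in your setup identifies this subgroup or shows that it contains the particular class $\cP|_C$. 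Moreover, even granting that step, your assertion that the residual class is a vertical divisor $V$ with $V\cdot X_j=0$ for all $j$ does not follow: any horizontal divisor you add to trivialize $\cP|_C$ necessarily meets the special fiber, hence changes the degrees of the class on the $X_i$, and you are returned to the problem you started from. So Zariski's lemma cannot be invoked as stated.

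The paper closes the argument without ever touching $\Pic^0(C)$. Writing $\cN|_{X^{\red}}\simeq\cO_{X^{\red}}(F-G)$ with $F,G$ effective and supported on good points, it lifts a defining section of $F+N\sum_i y_i$ (for auxiliary smooth points $y_i$ of degree divisible by $d$) from $X^{\red}$ to $Y$: the restriction map $H^0(Y,\cN(N\sum_iD_i'+\sum_jn_jD_j''))\to H^0(X^{\red},\cdot)$ is surjective for $N\gg0$ because $\cO_Y(\sum_iD_i')$ is ample and $H^1(Y,\cN\otimes\cI_{X^{\red}}(\cdots))$ vanishes by Serre vanishing. The zero divisor $D'''$ of the lifted section meets $X^{\red}$ only at smooth points of residue degree divisible by $d$; since the degree over $\Spec(\cO_k)$ of any integral component $T$ of $D'''$ equals $T\cdot X$, and each local contribution at such a point $x$ is a multiple of $[\kappa(x):\kappa]$, every component of $D'''$ automatically has degree divisible by $d$. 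You would need to replace your generic-fiber step with an argument of this kind; as written, the proposal is incomplete at its most substantive point.
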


\begin{proof}
Let $X^{\red}$ be the reduction of $X$. Define $\Xi:=Z\cap X^{\red}$.

Fix $i\in I$. Let $U_i\subset X_i$ be the dense open subset consisting of those smooth points that do not belong to $\Xi$ or to any of the $X_j$ with $j\neq i$. Let $\kappa_i$ be the algebraic closure of $\kappa$ in $U_i$, so $U_i$ is a geometrically integral variety over $\kappa_i$.
By definition of $\delta_i$, the variety $U_i$ carries a divisor of degree $\delta_i$ over $\kappa$.
 By Lemma~\ref{lemLW} applied to the variety $U_i$ over $\kappa_i$, one can therefore find a closed point $x_i\in U_i$ whose degree~$[\kappa(x_i):\kappa]$ over $\kappa$ is congruent to $\delta_i$ modulo $d$.
 Choose an integral divisor~$D_i\subset Y$ meeting~$X^{\red}$ transversally at $x_i$ (and hence only at $x_i$ by henselianity of~$\cO_k$, see~\cite[Lemma~\href{https://stacks.math.columbia.edu/tag/04GH}{04GH}\,(1)]{SP}). The degree of $D_i$ over $\Spec(\cO_k)$ is equal to the intersection number of $D_i$ with $X$. As this number is equal to~${m_i\cdot[\kappa(x_i):\kappa]}$, it is divisible by~$d$ by hypothesis.
After replacing~$\cN$ with~$\cN(-\sum_{i\in I}D_i)$, we may therefore assume that~$\cN|_{X_i}$ has degree over $\kappa$ divisible by $d$ for all~$i\in I$.

For each $i\in I$, use Lemma~\ref{lemLW} to find a closed point $y_i\in U_i$
whose degree over~$\kappa$ is divisible by $d$. 
By Proposition \ref{propevenF}, there exists a divisor $E$ on $X^{\red}$, whose support consists of smooth closed points of $X^{\red}\setminus \Xi$ with degree over $\kappa$ divisible by~$d$, with the property that~${\cN|_{X^{\red}}\simeq\cO_{X^{\red}}(E)}$. Write~$E=F-G$ as the difference of two effective divisors~$F$ and $G$ with disjoint supports. 
Set $G:=\sum_{j\in J} n_jz_j$. 
For~$i\in I$, choose an integral divisor $D'_i\subset Y$ intersecting~$X^{\red}$ transversally at~$y_i$ (hence only at $y_i$ by henselianity of $\cO_k$, see \cite[Lemma~\href{https://stacks.math.columbia.edu/tag/04GH}{04GH}\,(1)]{SP}). Similarly, for~$j\in J$, choose an integral divisor~$D''_j\subset Y$ intersecting $X^{\red}$ transversally at $z_j$, and only at~$z_j$.

As $\cO_Y(\sum_{i\in I}D'_i)|_{X^{\red}}=\cO_{X^{\red}}(\sum_{i\in I}y_i)$ is ample, so is $\cO_Y(\sum_{i\in I}D'_i)|_{X}$ by \cite[Lemma~\href{https://stacks.math.columbia.edu/tag/09MW}{09MW}]{SP}, and hence so is 
$\cO_Y(\sum_{i\in I}D'_i)$ by \cite[Lemma~\href{https://stacks.math.columbia.edu/tag/0D2S}{0D2S}]{SP}. 
We deduce that the group $H^1(Y,\cN\otimes\cI_{X^{\red}}(N\sum_{i\in I}D'_i+\sum_{j\in J}n_jD''_j))$ (where $\cI_{X^{\red}}\subset\cO_Y$ is the ideal sheaf of $X^{\red}$ in $Y$) vanishes for~$N\geq 0$ large enough, and hence that the restriction map
$$H^0(Y,\cN(N\sum_{i\in I}D'_i+\sum_{j\in J}n_jD''_j))\to H^0(X^{\red},\cN|_{X^{\red}}(N\sum_{i\in I}y_i+\sum_{j\in J}n_jz_j))$$
is surjective. Lift an equation of $F+N\sum_{i\in I}y_i$ in $X^{\red}$ (which is an element of $H^0(X^{\red},\cN|_{X^{\red}}(N\sum_{i\in I}y_i+\sum_{j\in J}n_jz_j))$) to~$H^0(Y,\cN(N\sum_{i\in I}D'_i+\sum_{j\in J}n_jD''_j))$. By construction, the zero locus of this lift is a divisor $D'''\subset Y$ whose restriction to~$X^{\red}$ is equal to~$F+N\sum_{i\in I}y_i$. To conclude, set $$D:=D'''-N\sum_{i\in I}D_i'-\sum_{j\in J}n_jD_j''.\eqno\qedhere$$
\end{proof}

\subsection{Values of sections of line bundles over \texorpdfstring{$p$}{p}-adic curves}
\label{parvalues}

The next proposition controls the closed points of a $p$-adic curve at which a given section of the square of a line bundle is a nonzero square.

\begin{prop}
\label{propsquare}
Let $C$ be a connected smooth projective curve over a $p$-adic field~$k$. 
Let~$\cL$ be a line bundle on $C$. Choose ${\sigma\in H^0(C,\cL^{\otimes 2})}$ nonzero with reduced zero locus. 
Then there exists a divisor $\Delta$ on $C$ with~$\cL\simeq\cO_C(\Delta)$ such that
the section~$\sigma$ is nonzero and not a square at $x$,
for all those closed points $x$ in the support of $\Delta$ that have odd degree over $k$.
\end{prop}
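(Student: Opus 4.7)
The strategy is to pull $\cL$ up to a proper regular model $\pi: Y \to \Spec(\cO_k)$ of $C$, apply Proposition \ref{propeven} with $d = 2$, and correct numerical obstructions on components of the special fiber by subtracting horizontal divisors through carefully chosen closed points of $C$ where $\sigma$ is not a square.

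Concretely, extend $\cL$ to a line bundle $\widetilde{\cL} \in \Pic(Y)$, write the special fiber as $X = \sum_{i \in I} m_i X_i$ with $\kappa$ the residue field of $k$, let $Z \subset Y$ be the schematic closure of $E := \{\sigma = 0\} \subset C$, and set $\delta_i := \deg_{\kappa}(\widetilde{\cL}|_{X_i})$ together with $I_{\text{odd}} := \{i \in I : m_i \delta_i \text{ odd}\}$, so that both $m_i$ and $\delta_i$ are odd for $i \in I_{\text{odd}}$. For each $i \in I_{\text{odd}}$ I plan to exhibit a smooth closed point $x_i \in X_i \setminus (Z \cup \bigcup_{j \neq i} X_j)$ of odd degree $[\kappa(x_i):\kappa]$ over $\kappa$ whose unique transversal horizontal lift $H_i \subset Y$ (constructed via henselianity of $\cO_k$ as in the proof of Proposition \ref{propeven}) restricts on $C$ to a closed point $x_i'$ of odd degree $m_i[\kappa(x_i):\kappa]$ over $k$ at which $\sigma|_{x_i'} \in (\cL|_{x_i'})^{\otimes 2}$ is a nonzero non-square. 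Granted such $x_i$, a direct intersection computation shows that $\widetilde{\cL}' := \widetilde{\cL}(-\sum_{i \in I_{\text{odd}}} H_i)$ satisfies $2 \mid m_j \deg_{\kappa}(\widetilde{\cL}'|_{X_j})$ for every $j \in I$: on the odd components the correction kills the obstruction, on the others $\delta_j$ is unchanged. Proposition \ref{propeven} applied to $\cN = \widetilde{\cL}'$ with $d = 2$ and $Z$ enlarged to contain the $H_i$ then yields $\widetilde{\cL}' \simeq \cO_Y(D)$ with $D$ supported on horizontal integral divisors of even degree over $\cO_k$ avoiding $Z$. Restricting to the generic fiber and setting $\Delta := D|_C + \sum_{i \in I_{\text{odd}}} x_i'$ produces a divisor with $\cO_C(\Delta) \simeq \cL$ whose support misses the zero locus of $\sigma$ and whose odd-degree part consists exactly of the $x_i'$, each by construction a point where $\sigma$ is a nonzero non-square.

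The main obstacle is the construction of the $x_i$. Existence of smooth closed points of any sufficiently large degree on the geometrically integral dense open $U_i \subset X_i^{\mathrm{red}}$ obtained by removing $Z$ and the other components is standard from Lang--Weil (Lemma \ref{lemLW}), so the task reduces to arranging the non-squareness of $\sigma(x_i')$ in the $p$-adic field $\kappa(x_i')$. Fix a local trivialization of $\widetilde{\cL}$ near the generic point $\eta_i$ of $X_i$ and let $f$ be the resulting local expression of $\sigma$ in $\mathrm{Frac}(\cO_{Y, \eta_i})$. Because $\delta_i$ is odd, the restriction of $\widetilde{\sigma}$ to $X_i$ (after a suitable normalization of $\widetilde{\cL}$ by vertical divisors) is a nonzero section of a line bundle of nonzero degree $2\delta_i$, hence must vanish; since $\sigma$ has reduced zero locus, this vanishing happens with multiplicity one along a horizontal component of $\overline{E}$ meeting $X_i$, forcing the class of $\bar f$ in $\kappa(X_i^{\mathrm{red}})^* / \kappa(X_i^{\mathrm{red}})^{*2}$ to be nontrivial. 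In the non-dyadic case, Hensel's lemma identifies the square class of $\sigma(x_i')$ in $\kappa(x_i')^*$ with the class of the reduction $\bar{f}(x_i)$ in $\kappa(x_i)^*$, and applying Lang--Weil to the \'etale double cover of $U_i$ detecting the square class of $\bar f$ supplies infinitely many odd-degree closed points $x_i$ where $\bar{f}(x_i)$ is a non-square. The dyadic case is more delicate---squareness in $\kappa(x_i')^*$ is no longer detected on the residue field alone---but a parallel counting on an appropriate Artin--Schreier--Witt-type cover of $U_i$, capturing the squareness condition modulo the relevant higher unit group of $\cO_{\kappa(x_i')}$, runs through. This is the step where the regularity of $Y$ and the flexibility in choosing $\widetilde{\cL}$ are essential.
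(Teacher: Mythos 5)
Your overall architecture is the same as the paper's: spread out to a regular model, fix the parities $m_i\delta_i$ on the components of the special fiber by subtracting horizontal divisors through well-chosen closed points, invoke Proposition \ref{propeven} with $d=2$, and restrict back to $C$. The parity bookkeeping and the reduction to constructing the points $x_i$ are correct. The problems are concentrated exactly where you locate "the main obstacle," and they are not closed.

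First, you cannot always arrange that the restriction of the extended section to $X_i$ is nonzero. Twisting $\widetilde{\cL}$ by a vertical divisor $E$ changes the order of vanishing of the section of $\widetilde{\cL}^{\otimes 2}$ along each component by an \emph{even} integer, so if $\tau$ vanishes to odd order along some $X_i$ with $i\in I_{\mathrm{odd}}$ you are left with order‑one vanishing and $\bar f=0$; your argument that "the class of $\bar f$ is nontrivial" then has no content. The paper treats this as a separate case, and it is in fact the easy one: order‑one vanishing of $\tau$ along $X_i$ forces $\sigma$ to have odd valuation at the point of $C$ cut out by any transversal horizontal lift, hence to be a non‑square there, with no point counting needed. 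Second, and more seriously, the dyadic case is not a technical refinement you can defer --- it is the only case that matters for the application (the $2$-adic places are precisely where $\langle 1,1,1\rangle$ is anisotropic, so Proposition \ref{propmiracle} is invoked exactly there), and your sketched fix cannot work in the form stated. When $\kappa$ has characteristic $2$, whether $\sigma(x_i')$ is a square in the $2$-adic field $\kappa(x_i')$ is \emph{not} a function of the special-fiber point $x_i$ at all: different transversal horizontal lifts through the same $x_i$ give different square classes. Concretely, for $C=\P^1_{\Q_2}$ and $\sigma=X^2+XY+Y^2\in H^0(C,\cO(2))$, the two lifts $[0:1]$ and $[2:1]$ of the point $[0:1]\in\P^1_{\F_2}$ give the values $1$ (a square) and $7$ (a non‑square). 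Hence no cover of $U_i$ --- étale, Artin--Schreier--Witt, or otherwise --- can certify that "the unique transversal horizontal lift" works; you must choose the lift $H_i$ itself, not merely its intersection with the special fiber, which requires an argument of a different nature (e.g.\ working with the section along the horizontal divisor, as in the odd-order-vanishing case, rather than with its reduction). As written, the proposal proves the proposition only for $p$ odd, which does not suffice for Theorem \ref{main1}.
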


\begin{proof}
Let $\kappa$ he the residue field of $\cO_k$.
Let~$\pi:Y\to \Spec(\cO_k)$ be a flat projective morphism with $Y$ regular whose generic fiber is isomorphic to $C$.
Extend $\cL$ to a line bundle $\cN$ on $Y$ in such a way that $\sigma$ extends to a section $\tau\in H^0(Y,\cN^{\otimes 2})$.
Let~$X$ be the special fiber of $\pi$. 
After replacing $Y$ with a modification, we may assume that $X\cup\{\tau=0\}$ is a simple normal crossings divisor in $Y$.
Replacing~$\cN$ with $\cN(-E)$ for some well-chosen divisor $E$ on~$Y$ supported on $X$, and dividing~$\tau$ by the square of an equation of~$E$, we may assume that $\tau$ has reduced zero locus.

Let $(X_i)_{i\in I}$ be the reduced irreducible components of $X$. We view them as varieties over~$\kappa$. Let~$(m_i)_{i\in I}$ be their multiplicities in $X$.
 Define 
 $$J:=\{i\in I\mid \cN|_{X_i}\textrm{ has odd degree over }\kappa\textrm{ and }m_i\textrm{ is odd}\}.$$ 
 For $j\in J$, let $\kappa_j$ be the algebraic closure of $\kappa$ in $X_j$.
 As $X_j$ carries a line bundle of odd degree over $\kappa$ (by definition of $J$), we see that $\kappa_j$ is an odd degree extension of~$\kappa$. Moreover, since $X_j$ is smooth, it is geometrically integral over $\kappa_j$. 
 We claim that for all $j\in J$, there exists an integral divisor $D_j\subset Y$ that is flat over~$\Spec(\cO_k)$, such that~$\cO_Y(D_j)|_{X_j}$ has odd degree over $\kappa$, 
 such that $D_j\cap X_i=\varnothing$ if~$i\neq j$, and such that~$\sigma$ is nonzero and not a square at the generic point of~$D_j$.

To prove the claim, we first assume that $\tau$ vanishes identically on $X_j$. 
Choose a closed point $x\in X_j$ that does not belong to any other irreducible component of~$X\cup\{\tau=0\}$ and that has odd degree over $\kappa_j$, hence over $\kappa$ (such a point exists by the Lang--Weil estimates \cite[Corollary~3]{LangWeil}).
Let $D_j\subset Y$ be an integral divisor that meets~$X_j$ transversally at~$x$. By henselianity of $\cO_k$, the divisor $D_j$ meets $X$ only at the point~$x$ (use \cite[Lemma~\href{https://stacks.math.columbia.edu/tag/04GH}{04GH}\,(1)]{SP}).
It follows that~$\cO_Y(D_j)|_{X_j}$ has odd degree over $\kappa$. In addition, since~$\tau$ vanishes at order exactly one along $X_j$, its restriction to~$D_j$ vanishes at order exactly one along $x$. Consequently, the section~$\tau$ cannot be a square at the generic point of $D_j$.

Assume now that $\tau$ does not vanish identically on $X_j$. The zero locus of $\tau$ on~$X_j$ is reduced (because $X\cup\{\tau=0\}$ is simple normal crossings and~${\{\tau=0\}}$ is reduced) and nonempty (otherwise $\tau$ would trivialize $(\cN^{\otimes 2})|_{X_j}$ and $\cN|_{X_j}$ could not have odd degree over $\kappa$). The double cover $\mu:\whX_j\to X_j$ with equation~${\{z^2=a\tau\}}$ (where~$a\in\kappa^*$ represents the nontrivial square class) is therefore geometrically integral over~$\kappa_j$. 
Let $U_j\subset \whX_j$ be the dense open subset consisting of those points whose images by $\mu$ do not belong to any irreducible component of~$X\cup\{\tau=0\}$ other than $X_j$.
By the Lang--Weil estimates \cite[Corollary 3]{LangWeil},
one can find a closed point~$y\in U_j$ that has odd degree over~$\kappa_j$, hence also over~$\kappa$.
By construction, the section $\tau$ is nonzero and not a square at $x:=\mu(y)$.
As above, any integral divisor~$D_j\subset Y$ that meets~$X_j$ transversally at $x$ is such that $\cO_Y(D_j)|_{X_j}$ has odd degree over $\kappa$. In addition, the section~$\tau$ is nonzero and not a square at the generic point of~$D_j$ because such is the case at $x$ (this can be deduced from \cite[Lemma 19.5]{EKM}).
This completes the proof of the claim.

Note that $\cN(-\sum_{j\in J}D_j)|_{X_i}$ has even degree over $\kappa$ for all $i\in I$ such that~$m_i$ is odd. By Proposition \ref{propeven} applied with $d=2$ and $Z:=\{\tau=0\}$, one can write 
$$\cN(-\sum_{j\in J}D_j)\simeq\cO_Y(D),$$
 where all the irreducible components of the support of $D$ are flat of even degree over $\Spec(\cO_k)$, and none of them are included in $\{\tau=0\}$. To conclude, define $\Delta$ to be the restriction of $D+\sum_{j\in J}D_j$ to $C$.
\end{proof}

\begin{rem}
In the setting of Proposition \ref{propsquare}, it is in general not true that all line bundles~$\cL'\in \Pic(C)$ are of the form $\cO_C(\Delta)$ with $\Delta$ as above
(\ie such that for all odd degree closed points $x$
in the support of $\Delta$, the section $\sigma$ is nonzero and not a square at~$x$).
An example is provided by $C=\P^1_k$, with $\cL=\cO_{\P^1_k}$ and~$\sigma=1\in H^0(\P^1_k,\cO_{\P^1_k})$. Then~$\cL'=\cO_{\P^1_k}(1)$ is not of the required form.
\end{rem}

The following lemma is well-known.

\begin{lem}
\label{lemrg3}
Let $k$ be a $p$-adic field. 
Let $q$ be a nondegenerate quadratic form of rank $\geq 3$ over $k$. 
Then $q$ is isotropic over~all finite extensions $l/k$ of even degree.
\end{lem}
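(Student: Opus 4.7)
The strategy is to reduce to the case where $q$ has rank exactly $3$, and then interpret $q$ as a conic and apply local class field theory. Any nondegenerate quadratic form of rank $\geq 3$ over $k$ contains a nondegenerate rank~$3$ subform (obtained by restricting to an orthogonal triple of basis vectors in a diagonalization), and isotropy of a subform implies isotropy of the ambient form. Hence it suffices to prove the lemma under the assumption $\mathrm{rank}(q)=3$.

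For such $q$, the smooth projective conic $Q:=\{q=0\}\subset \P^2_k$ is a Severi--Brauer variety of dimension $1$, and $q$ is isotropic over an extension $F/k$ if and only if $Q$ has an $F$-rational point, if and only if the quaternion algebra $D_Q$ over $k$ associated with $Q$ is split by $F$. If $[D_Q]=0$ in $\Br(k)$ then $q$ is already isotropic over $k$ and there is nothing to prove, so we may assume that $D_Q$ is a quaternion division algebra. Since $k$ is $p$-adic, $D_Q$ is the unique such algebra up to isomorphism, and its class in $\Br(k)\simeq\Q/\Z$ has local invariant equal to $\tfrac{1}{2}\in\Q/\Z$.

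The final step is the invariant computation from local class field theory: the restriction map $\Br(k)\to\Br(l)$ acts on local invariants as multiplication by $[l:k]$, so the invariant of $[D_Q\otimes_k l]$ in $\Br(l)\simeq\Q/\Z$ equals $[l:k]/2$, which vanishes precisely when $[l:k]$ is even. In that case $D_Q\otimes_k l$ is split, so $Q$ acquires an $l$-point, and $q_l$ is isotropic.

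I do not anticipate a substantive obstacle: the proof is a concatenation of the classical Severi--Brauer correspondence between ternary quadratic forms and quaternion algebras with the multiplication-by-degree formula for local Brauer invariants. The only mildly delicate point is justifying the reduction to rank $3$, but this follows immediately from the fact that any isotropic vector of a subform remains an isotropic vector of the ambient form.
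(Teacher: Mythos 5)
Your proof is correct and follows essentially the same route as the paper: reduce to rank $3$, attach the quaternion algebra of the ternary form, and invoke local class field theory to see that it splits over every even-degree extension. The paper packages the quaternion algebra via its norm form $\langle 1\rangle\perp dq$ and cites Serre for the splitting, whereas you use the conic/Severi--Brauer correspondence and the multiplication-by-degree formula for local invariants, but these are two presentations of the same argument.
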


\begin{proof}
We may assume that $q$ has rank $3$.
Let $d\in k^*$ be (a representative of) the determinant of $q$. Then~${\langle 1\rangle\perp dq}$ is the norm form of a quaternion algebra over~$k$ (see \cite[III.2]{Lam}), which splits over $l$ (see \cite[XIII.3,~Proposition~7]{Corpslocaux}). So~${\langle 1\rangle\perp dq}$ is hyperbolic over $l$ (see \cite[III.2.7]{Lam}), and $q$ is isotropic over $l$.
\end{proof}

We may now prove the main result of this section.

\begin{prop}
\label{propmiracle}
Let $C$ be a connected smooth projective curve over a $p$-adic field~$k$. Let~$\cL$ be a line bundle on $C$. Choose~$\sigma\in H^0(C,\cL^{\otimes 2})$ nonzero with reduced zero locus.
Let $q$ be a nondegenerate quadratic form of rank $\geq 3$ over $k$. 
There exists a divisor~$\Delta$ on $C$ with~$\cL\simeq\cO_C(\Delta)$ such that $\sigma$ is nonzero and represented by~$q$ at all points of the support of~$\Delta$.
\end{prop}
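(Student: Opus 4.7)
The plan is to mirror the proof of Proposition~\ref{propsquare}, with the role of ``non-square'' replaced by ``represented by $q$'' and the double cover $\whX_j\to X_j$ replaced by the quadric bundle $\{q(y)=\tau z^2\}\to X_j$. The indispensable algebraic input is Springer's theorem on odd-degree extensions: a quadratic form over a field $F$ of characteristic not $2$ that represents an element $a\in F$ in some odd-degree extension of $F$ already represents $a$ over $F$. This plays, for representability by $q$, the role that the elementary Kummer dichotomy played in Proposition~\ref{propsquare} for being a square.

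Concretely, I would start from the same setup: a regular projective model $\pi:Y\to\Spec(\cO_k)$ of $C$, an extension of $\cL$ and $\sigma$ to $\cN$ on $Y$ and $\tau\in H^0(Y,\cN^{\otimes 2})$, arranged so that $X\cup\{\tau=0\}$ is a simple normal crossings divisor and $\tau$ has reduced zero locus, and set $J=\{i\in I:\cN|_{X_i}\text{ has odd degree over }\kappa\text{ and }m_i\text{ is odd}\}$. For each $j\in J$ with $\tau|_{X_j}\not\equiv 0$, I would form the quadric bundle
\[
Q_j:=\{q(y_1,\dots,y_r)-\tau z^2=0\}\subset\P_{X_j}\bigl(\cO_{X_j}^{\oplus r}\oplus\cN^{-1}|_{X_j}\bigr).
\]
Its generic fiber over $X_j$ is a quadric of rank $r+1\geq 4$, hence geometrically integral, and since $X_j$ is geometrically integral over $\kappa_j$, so is $Q_j$. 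Lemma~\ref{lemLW} then supplies closed points $y\in Q_j$ of arbitrarily large odd degree over $\kappa_j$, which, after restricting to a suitable open, may be arranged so that $x_j:=\pi_{Q_j}(y)$ is a smooth point of $X_j$ of odd degree over $\kappa$, avoiding the other components of $X\cup\{\tau=0\}$. The fiber $\{q=\tau(x_j)z^2\}$ over $\kappa(x_j)$ is then isotropic after base change to the odd-degree extension $\kappa(y)/\kappa(x_j)$, so Springer's theorem produces a $\kappa(x_j)$-point, which, because $\tau(x_j)\neq 0$, realizes a smooth representation of $\tau(x_j)$ by $q$ over $\kappa(x_j)$. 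Henselianity of $\cO_k$ next yields an integral horizontal divisor $D_j\subset Y$ meeting $X$ transversally at $x_j$ only, and Lemma~\ref{Hensel} lifts the representation to one of $\sigma|_{x'_j}$ by $q$ over $\kappa(x'_j)$, where $x'_j$ is the closed point of $C$ corresponding to the generic point of $D_j$.

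The final assembly of $\Delta$ would then follow Proposition~\ref{propsquare} verbatim: the line bundle $\cN(-\sum_{j\in J}D_j)|_{X_i}$ has even degree whenever $m_i$ is odd, so Proposition~\ref{propeven} applied with $d=2$ and $Z=\{\tau=0\}$ writes $\cN(-\sum_{j}D_j)\simeq\cO_Y(D)$ with $D$ supported on horizontal integral divisors of even degree over $\cO_k$ not contained in $\{\tau=0\}$. Setting $\Delta:=(D+\sum_j D_j)|_C$, one has $\cL\simeq\cO_C(\Delta)$. The closed points of the support of $\Delta$ coming from $D$ have residue-field degrees over $k$ that are even, so Lemma~\ref{lemrg3} forces $q$ to be isotropic (hence universal) there, while $\sigma$ does not vanish at them because $D$ avoids $\{\tau=0\}$. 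At the closed points $x'_j$, representability was arranged in the previous step.

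The hard part I expect is the degenerate case where $\tau$ vanishes identically on some $X_j$ with $j\in J$: there the naive $Q_j$ collapses, and one must substitute for $\tau|_{X_j}$ the nonzero section of $(\cN^{\otimes 2}\otimes\cO_Y(-X_j))|_{X_j}$ that $\tau$ induces. One then has to build an appropriate quadric bundle over $X_j$ (possibly on a double cover of $X_j$, or in a suitably twisted projective bundle, depending on whether the conormal bundle of $X_j$ has a square root), and the Hensel lifting step must now produce a representation of an element of odd valuation in $\kappa(x'_j)$, whose square class is dictated by the choice of $x_j$. This case runs parallel to the corresponding delicate case in the proof of Proposition~\ref{propsquare} and I expect it to be handled by the same kind of careful local analysis, at the cost of additional bookkeeping.
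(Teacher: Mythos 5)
Your plan — rerun the geometry of Proposition~\ref{propsquare} with the double cover replaced by the quadric bundle $\{q(y)=\tau z^2\}$ — is not what the paper does, and as written it has two genuine gaps. First, $Q_j$ is supposed to be a variety over $X_j$, hence over the residue field $\kappa$ of characteristic $p$, while $q$ is a form over the $p$-adic field $k$; defining $Q_j$ forces you to reduce an integral model of $q$ modulo $\km$. That reduction is in general degenerate (already for $p$ odd, e.g.\ $\langle 1,-u,p\rangle$) and can be non-reduced when $p=2$ (e.g.\ $\langle 1,1,1\rangle$ reduces to $(y_1+y_2+y_3)^2$). In the non-reduced case $Q_j$ has no smooth $\kappa$-points, the Lang--Weil points do not lift (the Jacobian of $q(y)-\sigma z^2$ is divisible by $2$, so Lemma~\ref{Hensel} does not apply), and the mechanism collapses precisely at the $2$-adic places that matter for the sum-of-squares application. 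Second, you explicitly defer the case $\tau|_{X_j}\equiv 0$: there one must show that $q$ represents, over the $p$-adic field $\kappa(x'_j)$, a prescribed element of \emph{odd valuation} whose square class you do not control, and this requires exactly the analysis of the value set of $q$ that your argument avoids.

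The paper's proof is a short reduction that makes all of this unnecessary. One may assume $q$ has rank $3$ and is anisotropic; by \cite[VI.2.15\,(2)]{Lam} such a form over a $p$-adic field misses exactly one square class, so after rescaling $q$ and $\sigma$ by a non-represented element one may assume the missed class is that of $1$. Springer's theorem then shows that over every odd-degree extension of $k$ the form still misses exactly the squares, i.e.\ ``represented by $q$'' coincides with ``not a square''; over even-degree extensions $q$ is isotropic, hence universal, by Lemma~\ref{lemrg3}. The conclusion is then literally Proposition~\ref{propsquare}, including its delicate $\tau|_{X_j}\equiv 0$ case, which becomes free because odd-valuation elements are never squares. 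You correctly identified Springer's theorem and Lemma~\ref{lemrg3} as the key inputs, but missed that this normalization converts the statement into the already-proved square-class statement, so no new geometry on the special fiber is needed.
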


\begin{proof}
We may assume that $q$ has rank $3$. As isotropic forms are universal (see \cite[I.3.4]{Lam}), we may also assume that $q$ is anisotropic.

 By \cite[VI.2.15\,(2)]{Lam}, the form $q$ is not universal. Fix $a\in k^*$ not represented by $q$. 
After replacing $\sigma$ and $q$ with $a\sigma$ and $aq$, we may assume that~$a=1$.
Then~$1$ is not represented by $q$ on any odd degree extension of $k$, by Springer's theorem \cite[VII.2.9]{Lam}. We deduce from \cite[VI.2.15\,(2)]{Lam} that 
over any odd degree extension of $k$, an element is represented by $q$ if and only if it is not a square. 
In addition, the form $q$ is isotropic, hence universal, over all even degree extensions of~$k$, by Lemma \ref{lemrg3}.
The proposition now follows from Proposition~\ref{propsquare}.
\end{proof}

\section{Curves over number fields}
\label{secglobal}

We finally present the proofs of our main results.

\subsection{The representation theorem}
\label{parPourchet}

We start with Theorem~\ref{main2}. 

\begin{thm}
\label{main2bis}
Let $C$ be a geometrically connected smooth projective curve over a number field~$k$. Let $q$ be a nondegenerate quadratic form of rank $r\geq 5$ over $k$. Fix~$f\in k(C)^*$. Write $\ddiv(f)=E-2D$ with $E$ a reduced effective divisor.

 Then~$q$ represents $f$ in $k(C)$ if and only if there exists $\cM\in\Pic(C)$
such that:
\begin{enumerate}[label=(\roman*)] 
\item
\label{itext}
 if $v$ is a real place of $k$ and $q_v$ is positive definite (\resp negative definite), then $f$ is nonnegative  (\resp nonpositive) at $v$ and $\cl_v(\cM)=0$;
 \item
 \label{iitext}
if $v$ is a place of $k$ with $q_v=\tilde{q}_v\perp\langle 1,-1\rangle$ for some quadratic form~$\tq_v$ over~$k_v$, then
there exist a line bundle $\cP\in\Pic(C_v)$ and a divisor $\Delta$ on~$C_v$ with ${\cM\otimes\cP^{\otimes 2}\simeq\cO_{C_v}(\Delta-D)}$, such that~$f$ is invertible at $x$ and $f(x)$ is represented by $\tilde{q}_v$ for all closed points $x$ in the support of~$\Delta$.
\end{enumerate}
\end{thm}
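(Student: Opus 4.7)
The plan is to follow the blueprint of \S\ref{parsketch}. Writing $f=\sigma/\tau^2$ with $\sigma\in H^0(C,\cL^{\otimes 2})$ of reduced zero locus and $\tau$ a rational section of $\cL:=\cO_C(D)$ with divisor $D$, the statement ``$q$ represents $f$ in $k(C)$'' translates, by clearing denominators and rescaling by sections of a suitable twist $\cM\otimes\cA^{\otimes 2l}$ (with $\cA$ a fixed ample line bundle on $C$), into the existence of a nonzero $\alpha\in H^0(C,\cM\otimes\cA^{\otimes 2l})$ and sections $\beta_i\in H^0(C,\cL\otimes\cM\otimes\cA^{\otimes 2l})$ without common zero satisfying $\sigma\alpha^2=\sum_i a_i\beta_i^2$, where $q\simeq\langle a_1,\dots,a_r\rangle$. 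This is precisely the equation studied in Propositions \ref{condition1prop} and \ref{condition2prop}. The necessity direction then follows immediately: any global solution specializes to a local solution at each place $v$, and the implications \ref{iquaR}$\Rightarrow$\ref{iiiquaR} and \ref{iquad}$\Rightarrow$\ref{iiiquad} of those propositions, applied after base change to $k_v$, extract conditions (i) at real places where $q_v$ is definite, and (ii) at the places where $q_v\simeq\tilde q_v\perp\langle 1,-1\rangle$.

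For sufficiency, conditions (i) and (ii) feed into the converse implications \ref{iiiquaR}$\Rightarrow$\ref{iiquaR} and \ref{iiiquad}$\Rightarrow$\ref{iiquad} to produce, at every place $v$, a local solution $(\alpha_v,(\beta_{i,v}))$ with common twist $\cN=\cA^{\otimes l}$ for one sufficiently large $l$ (possible since the constraints are nontrivial only at finitely many places, as explained after Theorem \ref{main2}). To pass from these local solutions to a global one, exhibit inside $q$ a scaled $2$-fold Pfister subform $c\cdot\pi$ with $\pi=\llangle a,b\rrangle$, so that $q\simeq q_0\perp c\pi$ for some rank $r-4$ complement $q_0$; then use weak approximation to choose global sections $\alpha$ and the ``$q_0$-coordinates'' among the $\beta_i$'s matching the local data $(\alpha_v,\beta_{i,v})$ at the finite set $S$ of places where $\pi$ is anisotropic, and define the rational function $g$ built from those matched sections and $\tau$. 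The residual $f-c g^2\cdot(\text{sum of } q_0\text{-squares already subtracted})$ should then be represented by $\pi$ over $k_v(C_v)$ at every place $v$: automatically outside $S$, where $\pi$ is isotropic and hence universal, and inside $S$ by the approximation combined with Proposition \ref{openness}, which ensures that representability by $\pi$ is an open condition in the $p$-adic topology on global sections of $\cL^{\otimes 2}$. Kato's local-global Proposition \ref{localglobal} applied to $\pi$ then produces a global representation of the residual by $\pi$, and therefore of $f$ by $q$.

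The main obstacle is the patching step. One has to pick the approximating global sections of $\cM\otimes\cA^{\otimes 2l}$ and $\cL\otimes\cM\otimes\cA^{\otimes 2l}$ so that the residual function retains local representability by $\pi$ at every place of $S$: at the $p$-adic places in $S$ this rests on Proposition \ref{openness} (and explains why the openness result is the new ingredient beyond Pourchet's work), while at the real places in $S$ one must propagate the sign of $f-g_v^2$ to that of $f-g^2$, exploiting the continuity of the local solution together with condition (i). A secondary subtlety is to select the right scaled $2$-fold Pfister subform $c\pi\subset q$ and the corresponding peeling off of $r-4$ coordinates compatibly with the constraints (i) and (ii); this Witt-theoretic flexibility, absent in Pourchet's analysis of $\P^1$, is what allows the argument to accommodate an arbitrary curve $C$ and an arbitrary nondegenerate $q$ of rank $r\geq 5$.
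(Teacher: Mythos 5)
Your proposal is correct and follows essentially the same route as the paper: necessity via the section-level translation and Propositions \ref{condition1prop} and \ref{condition2prop}, and sufficiency by splitting off $\llangle a,b\rrangle$ from $q$ (after rescaling, via Pourchet), producing local solutions at the finitely many places where this Pfister form is anisotropic, approximating by global sections, and checking that the residual $f-\sum_{i\ge 5}a_i(\beta_i/\alpha\tau)^2$ is everywhere locally represented by $\llangle a,b\rrangle$ (Witt at real places, Proposition \ref{openness} at $p$-adic places) before invoking Kato's local--global principle. The only looseness is notational (the residual formula and the precise line bundle in the openness step), not structural.
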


\begin{proof}
Set $\cL:=\cO_C(D)$ and let $\tau$ be a rational section of $\cL$ with $\ddiv(\tau)=D$. Define~$\sigma:=f\tau^2$, so that $\sigma\in H^0(C,\cL^{\otimes 2})$ is such that $\ddiv(\sigma)=E$.

Write $q=\langle a_1,\dots, a_r\rangle$ for some $a_i\in k^*$. 

\begin{Step}
Conditions \ref{itext} and \ref{iitext} are necessary.
\end{Step}
Assume that $f=\sum_{i=1}^ra_ig_i^2$ for some~$g_i\in k(C)$. Then $\sigma=\sum_{i=1}^ra_i(g_i\tau)^2$.
Let~$F$ be the smallest effective divisor on~$C$ such that $F+\ddiv(g_i\tau)$ is effective for all $1\leq i\leq r$ with $g_i\neq 0$.
Define ${\cM:=\cO(F)}$. Let $\alpha\in H^0(C,\cM)$ be a nonzero section such that~$\ddiv(\alpha)=F$. Set $\beta_i:=g_i\alpha\tau\in H^0(C,\cL\otimes\cM)$. Then
\begin{equation}
\sigma\alpha^2=\sum_{i=1}^ra_i\beta_i^2.
\end{equation}

As $\sigma$ only has simple zeros, the $(g_i\tau)_{1\leq i\leq r}$ have no common zero. It follows from our choice of $\alpha$ that the $(\beta_i)_{1\leq i\leq r}$ have no common zero either. Condition \ref{itext} now 
follows from Proposition \ref{condition1prop} (applied to $\sigma$ if $q_v$ is positive definite and to $-\sigma$ if $q_v$ is negative definite) and condition~\ref{iitext} from Proposition \ref{condition2prop}.

\begin{Step}
Conditions \ref{itext} and \ref{iitext} are sufficient.
\end{Step}

Assume that $\cM\in\Pic(C)$ satisfies conditions \ref{itext} and \ref{iitext}. Fix $\cA\in \Pic(C)$ ample.

We may freely multiply $q=\langle a_1,\dots, a_r\rangle$ and $f$ by the same element of $k^*$. An appropriate choice of scalar (and a coordinate change) allow us to assume that $(a_1,a_2,a_3,a_4)=(1,a,b,ab)$ for some $a,b\in k^*$ (\ie that 
$q=\llangle a,b\rrangle\perp\langle a_5,\cdots,a_{r}\rangle$). It suffices to verify this claim when $q$ has rank $r=5$, in which case it is proven in \cite[Proposition 8]{Pourchet}.

Let $V$ be the finite set of places of $k$ at which the Pfister form $\llangle a,b\rrangle$ is anisotropic (\ie at which the quaternion algebra $(a,b)$ does not split, see \cite[III.2.7]{Lam}).

Fix $v\in V$. Unless $v$ is real and $q_v$ is definite, the quadratic form $q_v$ is isotropic (see \cite[VI.2.12]{Lam}). It follows from Proposition \ref{condition1prop} (if $v$ is a real and~$q_v$ is definite) or from Proposition \ref{condition2prop} (otherwise) that for all $l$ large enough, there exist sections~$\alpha_v\in H^0(C_v,\cM\otimes\cA^{\otimes 2l})$ and~$\beta_{i,v}\in H^0(C_v,\cL\otimes\cM\otimes\cA^{\otimes 2l})$ such that
\begin{equation}
\sigma\alpha_v^2=\sum_{i=1}^ra_i\beta_{i,v}^2
\end{equation}
with $\alpha_v\neq 0$. In addition, the quoted propositions allow us to ensure that no two of the $(\beta_{i,v})$ have a common real zero (if $v$ is real and $q_v$ is definite) or a common zero (otherwise).
Finally, as~$V$ is finite, one can choose $l$ to be independent of $v\in V$.

Use the Artin--Whaples approximation theorem \cite[Theorem 1]{AW},
to find sections $\alpha\in H^0(C,\cM\otimes\cA^{\otimes 2l})$ and~$\beta_{i}\in H^0(C,\cL\otimes\cM\otimes\cA^{\otimes 2l})$ that are close to the $\alpha_v$ and the $\beta_{i,v}$ for the topology induced by $v$, for all $v\in V$.

We claim that~$f-\sum_{i=5}^ra_i\big(\frac{\beta_i}{\alpha\tau}\big)^2=\frac{1}{\alpha^2\tau^2}(\sigma\alpha^2-\sum_{i=5}^ra_i\beta_i^2)$ is represented by~$\llangle a,b\rrangle$ in $k_v(C_v)$ for all places $v$ of $k$. 
\begin{enumerate}
[label=(\alph*)] 
\item
If $v\notin V$, then $\llangle a,b\rrangle$ is isotropic, hence universal (see \cite[I.3.4]{Lam}), and the claim is trivial.
\item
 If $v\in V$ is real then the form $\llangle a,b\rrangle$ is definite, hence positive definite, at the place $v$. As the $(\beta_{i,v})_{1\leq i\leq 4}$ have no common real zero, we deduce that the section $\sigma\alpha_v^2-\sum_{i=5}^ra_i\beta_{i,v}^2=\sum_{i=1}^4a_i\beta_{i,v}^2\in H^0(C_v,(\cL\otimes\cM\otimes\cA^{\otimes 2l})^{\otimes 2})$ is positive on $C_v(k_v)$. If $\alpha$ and the $\beta_i$ have been chosen close enough to~$\alpha_v$ and the $\beta_{i,v}$, then so is the section~$\sigma\alpha^2-\sum_{i=5}^ra_i\beta_{i}^2$. It follows that~$f-\sum_{i=5}^ra_i\big(\frac{\beta_i}{\alpha\tau}\big)^2$ is nonnegative.
By a theorem of Witt (see \cite[I~p.\,4]{Witt}), we deduce that $f-\sum_{i=5}^ra_i\big(\frac{\beta_i}{\alpha\tau}\big)^2$ is a sum of two squares in $k_v(C_v)$, hence that it is represented by $\llangle a,b\rrangle$ in this field.
\item
Suppose that~$v\in V$ is $p$-adic for some prime number $p$. Let $x\in C_v$ be a zero of $\sigma\alpha_v^2-\sum_{i=5}^ra_i\beta_{i,v}^2=\sum_{i=1}^4a_i\beta_{i,v}^2\in H^0(C_v,(\cL\otimes\cM\otimes\cA^{\otimes 2l})^{\otimes 2})$. As the~$(\beta_{i,v})_{1\leq i\leq 4}$ do not all vanish at $x$, the form $\llangle a,b\rrangle=\langle a_1,a_2,a_3,a_4\rangle$ is isotropic in the residue field of $x$. It therefore follows from Proposition~\ref{openness} that, if $\alpha$ and the $\beta_i$ have been chosen close enough to~$\alpha_v$ and the $\beta_{i,v}$, the section~$\sigma\alpha^2-\sum_{i=5}^ra_i\beta_{i}^2$ is represented by $\llangle a,b\rrangle$ at the generic point of~$C_v$. It follows that $f-\sum_{i=5}^ra_i\big(\frac{\beta_i}{\alpha\tau}\big)^2$ is represented by $\llangle a,b\rrangle$ in $k_v(C_v)$.
\end{enumerate}
We deduce from Proposition \ref{localglobal} that $f-\sum_{i=5}^ra_i\big(\frac{\beta_i}{\alpha\tau}\big)^2$ is represented by $\llangle a,b\rrangle$ in~$k(C)$, hence that $f$ is represented by $q$ in $k(C)$.
 \end{proof}
 
 Let us list a few cases in which condition \ref{iitext} can be verified.
 
\begin{rems}
\label{rem41}
(i)
Condition \ref{iitext} in Theorem \ref{main2bis} is always satisfied, for any choice of~${\cM\in\Pic(C)}$, if $\tq_v$ is isotropic (as isotropic forms are universal \cite[I.3.4]{Lam}). 
 
(ii)
Assume that $v$ is real and that $\tq_v$ is positive definite (\resp negative definite). Then condition  \ref{iitext} in Theorem \ref{main2bis} holds for a given $\cM\in \Pic(C)$ if and only if, for any connected component~$\Gamma$ of $C_v(\R)$ on which~$f$ is nonpositive (\resp nonnegative), the class~$\cl_v(\cM(D))|_{\Gamma}\in H^1(\Gamma,\Z/2)=\Z/2$ vanishes (see Lemma \ref{lemhyp} below).
 
(iii)
If $v$ is a $p$-adic place, then condition \ref{iitext} in Theorem \ref{main2bis} is always satisfied for~$\cM=\cO_{C}$, by Proposition \ref{propmiracle} (one can take $\cP=\cO_{C_v}$).
\end{rems}
 
 \begin{lem}
 \label{lemhyp}
 Let $C$ be a connected smooth projective curve over $\R$. Fix $f\in\R(C)^*$.
 Write $\ddiv(f)=E-2D$ with $E$ a reduced effective divisor. For all $\cM\in \Pic(C)$, the following assertions are equivalent.
 \begin{enumerate}
[label=(\roman*)] 
\item
\label{iR}
There exist $\cP\in\Pic(C)$ and a divisor $\Delta$ on $C$ with ${\cM\otimes\cP^{\otimes 2}\simeq\cO_{C}(\Delta-D)}$ such that $f$ is invertible and a square at $x$, for all $x$ in the support of $\Delta$.
\item
\label{iiR}
For any connected component~$\Gamma$ of $C(\R)$ on which~$f$ is nonpositive, the Borel--Haefliger class~$\cl_{\R}(\cM(D))|_{\Gamma}\in H^1(\Gamma,\Z/2)=\Z/2$ vanishes.
\end{enumerate}
 \end{lem}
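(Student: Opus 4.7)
Suppose $\cP$ and $\Delta$ satisfy (i). Since the Borel--Haefliger map $\cl_\R$ is a homomorphism into a $\Z/2$-vector space, it kills squares, so $\cl_\R(\cM(D))=\cl_\R(\cO_C(\Delta))$. If $x$ lies in the support of $\Delta$ and is a real point, then by (i) the value $f(x)$ is a nonzero square in $\R$, hence $f(x)>0$. Therefore no real point of the support of $\Delta$ can lie on a component $\Gamma$ of $C(\R)$ on which $f$ is nonpositive; the parity of real points of $\Delta$ on such a $\Gamma$ is zero, whence $\cl_\R(\cM(D))|_\Gamma=0$.

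\textbf{Direction (ii)$\Rightarrow$(i).} The plan is to first build an auxiliary real divisor $\Sigma$ capturing the Borel--Haefliger class of $\cM(D)$, and then complete the construction using Witt's theorem. For each component $\Gamma$ of $C(\R)$ with $\cl_\R(\cM(D))|_\Gamma=1$, hypothesis (ii) forces $f$ not to be nonpositive on $\Gamma$; since $E$ is reduced, the set $\Gamma\cap\{f>0\}$ is then a nonempty open subset of $\Gamma$, and I can pick a real point $x_\Gamma$ there, away from the finitely many zeros and poles of $f$. Setting $\Sigma:=\sum_\Gamma x_\Gamma$ gives a divisor whose real support lies in $\{f>0\}$, on which $f$ is invertible, and satisfying $\cl_\R(\cO_C(\Sigma))=\cl_\R(\cM(D))$.

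Next, let $\cN:=\cM(D)\otimes\cO_C(-\Sigma)$; by construction $\cN\in\ker(\cl_\R)$. By Witt's theorem \cite[III p.4]{Witt}, $\cN$ admits a nonzero rational section $\delta$ whose divisor $\ddiv(\delta)$ has no real support. Setting $\Delta:=\Sigma+\ddiv(\delta)$ and $\cP:=\cO_C$, one obtains $\cO_C(\Delta-D)\simeq\cO_C(\Sigma)\otimes\cN\otimes\cO_C(-D)\simeq\cM$, so that $\cM\otimes\cP^{\otimes 2}\simeq\cO_C(\Delta-D)$. The real support of $\Delta$ equals that of $\Sigma$, hence is contained in $\{f>0\}$ where $f$ is a nonzero square in $\R$; at complex points (with residue field $\C$) of the support of $\Delta$, $f$ is automatically a square in $\C^*$.

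\textbf{Main obstacle.} The remaining delicate point is to ensure that the complex support of $\ddiv(\delta)$ avoids the finite set $T$ of complex zeros and poles of $f$, so that $f$ is invertible at every complex point of the support of $\Delta$. My plan is to replace $\delta$ by $g\delta$ for a rational function $g\in\R(C)^*$ having no real zero or pole and prescribed orders $\mathrm{ord}_t(g)=-\mathrm{ord}_t(\delta)$ at each $t\in T$. Existence of such $g$ follows from a further application of Witt's theorem to the line bundle $\cO_C(\sum_{t\in T}\mathrm{ord}_t(\delta)\cdot t)$, whose Borel--Haefliger class vanishes because $T$ consists only of complex closed points, combined with a moving argument that realizes the prescribed local data at $T$ while preserving the absence of real zeros and poles. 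This technical moving step is the one that requires the most care; everything else is formal manipulation of Picard classes and the two classical theorems of Witt over $\R$.
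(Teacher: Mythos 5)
Your proof follows essentially the same route as the paper's. Direction \ref{iR}$\Rightarrow$\ref{iiR} is the same parity observation. For \ref{iiR}$\Rightarrow$\ref{iR}, the paper likewise first builds the real part of $\Delta$ out of one point per obstructed component chosen where $f$ is invertible and positive, then invokes Witt's theorem \cite[III p.\,4]{Witt} to correct the remaining class by a divisor with no real support, and finally moves the complex points of that correcting divisor off the non-invertibility locus of $f$. The only place where you stop short is this last ``moving argument'', which you assert (the existence of $g$ with prescribed orders at $T$ and no real zeros or poles) rather than prove; note that a second application of Witt's theorem alone does not suffice, since the real-point-free divisor it produces may still meet $T$, so the moving step is genuinely where the work is. The paper carries it out by an explicit perturbation: writing the complex part of the divisor as $\sum_i n_ix_i$, it takes an equation $\sigma_i$ of $x_i$, an integer $N$ with $\cO_C(Nx_i)$ very ample, and a small perturbation $\tau_i$ of $\sigma_i^N$ in $H^0(C,\cO_C(Nx_i))$ whose zero locus stays near the complex point $x_i$ (hence avoids $C(\R)$) and misses the finitely many points where $f$ is not invertible; modifying the divisor by the principal divisors $\ddiv(\tau_i/\sigma_i^N)$, adjusted to clear the multiplicities $n_i$, then replaces the $x_i$ by admissible complex points without changing the real support or the class in $\Pic(C)$. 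With that perturbation (or an equivalent one) written out, your argument is complete; as it stands, the key technical step is only announced.
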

 
 \begin{proof}
 Assume that \ref{iR} holds. If $\cl_{\R}(\cM(D))|_{\Gamma}\neq 0$, then $\cl_{\R}(\Delta)|_{\Gamma}\neq 0$, and the support of $\Delta$ must meet $\Gamma$ at some point. At such a point, the rational function $f$ is invertible and a square, hence positive. This proves \ref{iiR}.

 Conversely, suppose that \ref{iiR} holds. Then there exists a divisor~$\Delta$ on $C$ such that~$f$ is invertible and positive at all real points of the support of $\Delta$, and such that~${\cl_{\R}(\cM(D-\Delta))=0}$. By a theorem of Witt (see \cite[III~p.\,4]{Witt}), one can modify $\Delta$ by a divisor whose support has no real points to ensure that moreover~$\cM(D-\Delta)\simeq\cO_{C}$. Write $\Delta=\sum_i n_ix_i+\sum_jm_jy_j$, where the $x_i$ (\resp the~$y_j$) are closed points of $C$ with complex (\resp real) residue fields. Choose $N\geq 0$ such that $\cO_C(Nx_i)$ is very ample for all $i$. Let $\sigma_i\in H^0(C,\cO_C(x_i))$ be an equation of~$x_i$. Let $\tau_i\in H^0(C,\cO_C(Nx_i))$ be a small perturbation of $\sigma_i^N$, chosen not to vanish on any real point of $C$ or on any point of $C$ at which $f$ is not invertible. After replacing~$\Delta$ with~$\Delta+\ddiv(\prod_i\frac{\tau_i}{\sigma_i^N})$, we may assume that $f$ is invertible at all points of the support of~$\Delta$. This proves~\ref{iR}.
 \end{proof}

 \subsection{Consequences of Theorem \ref{main2bis}}
 \label{parcoro}
 
 We now derive more concrete corollaries of Theorem \ref{main2bis} under various additional assumptions (on the function $f$, on the quadratic form $q$, or on the curve $C$).

\begin{cor}
\label{coros}
Let $C$ be a geometrically connected smooth projective curve over a number field~$k$. Let $q$ be a nondegenerate quadratic form of rank $r\geq 5$ over~$k$. Fix~${f\in k(C)^*}$. Write $\ddiv(f)=E-2D$ with $E$ a reduced effective divisor. Assume that at least one of the following assertions holds.
\begin{enumerate}[label=(\alph*)] 
\item
\label{atext}
$\cl_v(D)=0$ for all real places $v$ of $k$ at which $q_v$ is definite.
\item
\label{btext}
$\cl_v(D)=0$ for all real places $v$ of $k$ at which $q_v$ 
has hyperbolic~signature.
\item 
\label{ctext}
 The form $q$ does not have hyperbolic signature at any real place of~$k$. 
 \item
  \label{dtext}
 The curve $C$ is a nonsplit conic over $k$.
 \item
 \label{etext}
One can write $\ddiv(f)=4F+G$ in such a way that the residue fields of all points in the support of~$G$ have no real embeddings.
\end{enumerate}
Then~$q$ represents $f$ in $k(C)$ if and only if $f$ is nonnegative (\resp non\-positive) at all real places of $k$ at which $q$ is positive definite (\resp negative definite).
\end{cor}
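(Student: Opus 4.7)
The plan is to verify, in each of the cases (a)--(e), the hypotheses of Theorem~\ref{main2bis}. The ``only if'' direction is immediate: condition (i) of Theorem~\ref{main2bis} already forces the prescribed sign of $f$ at the real places where $q$ is definite. For the converse, given the sign hypothesis on $f$, I need to exhibit a line bundle $\cM\in\Pic(C)$ satisfying conditions (i) and (ii) of Theorem~\ref{main2bis}. The guiding observations are those of Remark~\ref{rem41}: condition (ii) is vacuous where $\tq_v$ is isotropic; at a real place $v$ where $q_v$ has hyperbolic signature it reduces, by Remark~\ref{rem41}\,(ii), to the vanishing of $\cl_v(\cM(D))|_\Gamma$ on the components $\Gamma\subset C_v(\R)$ on which $f$ has the ``wrong'' sign; and it holds for $\cM=\cO_C$ at every $p$-adic place by Remark~\ref{rem41}\,(iii).

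In cases (b), (c), (d), (e), I take $\cM=\cO_C$. Condition (i) is then trivial, and condition (ii) at $p$-adic places holds by Remark~\ref{rem41}\,(iii); only condition (ii) at the real hyperbolic places remains, which by Remark~\ref{rem41}\,(ii) amounts to the vanishing of $\cl_v(D)|_\Gamma$ on the relevant components. This is vacuous in case (c); it is the hypothesis in case (b). In case (d), on a nonsplit conic $\Pic(C)\simeq\Z$ is generated by a class of degree $2$, hence every divisor on $C$ has even $k$-degree; since degrees are invariant under base change, $\cl_v(D)=0$ at every real place $v$. In case (e), the hypothesis on $G$ gives $\mathrm{ord}_y(G_v)=0$ for every real $y\in C_v(\R)$, so the order of $f$ at $y$ lies in $4\Z$; since $E$ is reduced, this forces $\mathrm{ord}_y(E_v)=0$ and $\mathrm{ord}_y(D_v)$ even, so again $\cl_v(D)=0$.

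In case (a), I take instead $\cM=\cO_C(D)$. Then $\cl_v(\cM)=\cl_v(D)=0$ at real definite places by hypothesis, so condition (i) holds. At a real hyperbolic place $v$, Remark~\ref{rem41}\,(ii) asks for $\cl_v(\cM(D))|_\Gamma=\cl_v(2D)|_\Gamma=0$, which is automatic. For condition (ii) at a $p$-adic place $v$, I apply Proposition~\ref{propmiracle} to the section $\sigma=f\tau^2\in H^0(C_v,\cO_{C_v}(D)^{\otimes 2})$ (where $\tau$ is a rational section of $\cO_C(D)$ with $\ddiv(\tau)=D$) and the form $\tq_v$ of rank $r-2\geq 3$, producing a divisor $\Delta_0$ on $C_v$ with $\cO_{C_v}(\Delta_0)\simeq\cO_{C_v}(D)$ such that $\sigma$ is represented by $\tq_v$ at the points of $\Supp(\Delta_0)$. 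Setting $\Delta=2\Delta_0$ and $\cP=\cO_{C_v}(\Delta_0-D)$, one has $\cM\otimes\cP^{\otimes 2}\simeq\cO_{C_v}(\Delta-D)$; and since $\sigma|_x$ and $f(x)$ differ by a square at each $x\in\Supp(\Delta_0)$, the required representation statement for $f$ follows.

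The main obstacle I anticipate is the $p$-adic verification in case (a). Using $\cM=\cO_C$ fails because condition (ii) may then be violated at real hyperbolic places, forcing the choice $\cM=\cO_C(D)$; matching the Picard class condition in Theorem~\ref{main2bis}\,(ii) after invoking Proposition~\ref{propmiracle} then requires the doubling trick $\Delta\mapsto 2\Delta$ described above, which is the only step beyond pure bookkeeping and application of Remark~\ref{rem41}.
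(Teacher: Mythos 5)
Your proof is correct and follows the same overall strategy as the paper: apply Theorem~\ref{main2bis} with a suitable $\cM$ and check conditions (i) and (ii) via Remark~\ref{rem41}. Cases (b) and (c) coincide with the paper's treatment, and your direct verifications that $\cl_v(D)=0$ in cases (d) and (e) correctly fill in details the paper only asserts --- though in (d) your parity-of-degree argument implicitly uses that $C_v(\R)$ is empty or connected (true for a conic, and explicitly invoked in the paper); for a general curve with disconnected real locus, even total degree would not force the Borel--Haefliger class to vanish. The one genuine divergence is case (a): you take $\cM=\cO_C(D)$, which obliges you to handle condition (ii) at the $p$-adic places by invoking Proposition~\ref{propmiracle} together with the doubling trick $\Delta=2\Delta_0$, $\cP=\cO_{C_v}(\Delta_0-D)$. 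The paper instead takes $\cM=\cO_C(-D)$, for which condition (ii) is satisfied at \emph{every} place with $\cP=\cO_{C_v}$ and $\Delta=0$ (the condition on the support of $\Delta$ being vacuous), so no $p$-adic input is needed at all in case (a). Your route does work, but it is more laborious and inherits a small caveat: Proposition~\ref{propmiracle} only guarantees that $\sigma=f\tau^2$ is nonzero and represented at the points of $\Supp(\Delta_0)$, not that these points avoid $\Supp(D)$, where $f$ fails to be invertible in the literal sense of condition (ii); the paper's choice of $\cM$ sidesteps this issue entirely in case (a).
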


\begin{proof}
In case \ref{atext}, we apply Theorem \ref{main2bis} with $\cM=\cO_C(-D)$. Condition \ref{itext} is satisfied by our hypotheses on the real places at which $q$ is definite. Condition \ref{ii} is automatically satisfied (with $\cP=\cO_{C_v}$ and $\Delta=0$) by our choice of $\cM$.

In case \ref{btext}, we apply Theorem~\ref{main2bis} with $\cM=\cO_C$. 
Condition \ref{itext} is satisfied by our hypothesis on definite real places $v$ of $k$, as $\cl_v(\cO_C)=0$. Condition \ref{iitext} holds at a real place by Remarks~\ref{rem41} (i) and (ii) and 
at a $p$-adic place by Remark \ref{rem41} (iii).

Case~\ref{ctext} is a particular case of \ref{btext}.

Case \ref{dtext} follows from either \ref{atext} or \ref{btext}. Indeed, 
let $v$ be a real place of~$k$. The set of real points $C_v(\R)$ is empty or connected (because $C$ is a conic).  In addition, the group $\Pic(C)$ is generated by a degree $2$ line bundle (as $C$ is a nonsplit conic). It follows from these two facts that $\cl_v$ vanishes identically on $\Pic(C)$.

Finally, case \ref{etext} also follows from either \ref{atext} or \ref{btext}, as it implies that $\cl_v(D)=0$ for all real places $v$ of $k$.
\end{proof}
 
 \begin{rems}
 \label{remmain2}
 (i) 
 Condition \ref{etext} in Corollary \ref{coros} is exactly the one appearing in Pop's \cite[Theorem 2.5\,(1)]{Pop} (in the particular case where $q=\langle 1,1,1,1,1\rangle$). One can therefore think of Corollary~\ref{coros}\,\ref{etext} as generalizing this result~of~Pop.
 
 (ii)
If $q$ is not definite at any real place of $k$, it follows from Corollary \ref{coros}\,\ref{atext} that any~$f\in k(C)$ is represented by $q$. However, this statement is trivial because this assumption implies that $q$ is isotropic (see \cite[VI.3.5]{Lam}) hence universal over any extension of $k$ (see \cite[I.3.4]{Lam}).
 
 (iii)
 The argument in (ii) also shows that Theorem \ref{main2bis} remains true, and is trivial, over~global fields of positive characteristic (in characteristic $2$, see \cite[Proposition~3.1]{Pollak}).
 \end{rems}
 
In the case $C=\P^1_k$, we recover the results of Pourchet \cite[Corollaire 1 p.\,98]{Pourchet}.

\begin{cor}
\label{coroPourchet}
Let $k$ be a number field. Let $q$ be a nondegenerate quadratic form of rank $r\geq 5$ over $k$. Let $f\in k[t]$ be a separable polynomial. Then $q$ represents $f$ over $k[t]$ (equivalently, over $k(t)$) if and only if one of the following assertions hold.
\begin{enumerate}[label=(\roman*)] 
\item
\label{Pi}
For all real places $v$ of $k$, the form $q_v$ is not definite.
\item
\label{Pii}
For all real places $v$ of $k$ such that $q_v$ is positive definite (\resp negative definite), the function $f$ is nonnegative (\resp nonpositive) at $v$, and either
\begin{enumerate}[label=(\alph*)] 
\item
\label{Pa}
the degree of $f$ is divisible by $4$; or
\item
\label{Pb}
if $v$ is a real place of $k$ and $q_v$ has signature~$(r-1,1)$ (\resp $(1,r-1)$), then $f$ is not nonpositive (\resp not nonnegative) at $v$.
\end{enumerate}
\end{enumerate}
\end{cor}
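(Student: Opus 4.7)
The plan is to specialize Theorem \ref{main2bis} to $C = \P^1_k$, where many of its ingredients become purely numerical. The first step will be to compute that for a separable polynomial $f \in k[t]$ of degree $d$, one has $\ddiv(f) = Z(f) - d\infty$ on $\P^1_k$, and writing $\ddiv(f) = E - 2D$ with $E$ reduced effective forces $D = \lceil d/2\rceil \cdot \infty$. Since $\P^1(\R)$ is connected and $\infty$ is real, the Borel--Haefliger class is purely numerical: $\cl_v(D) \equiv \lceil d/2\rceil \pmod{2}$ and $\cl_v(\cO_{\P^1_k}(n)) \equiv n \pmod{2}$ at every real place~$v$. The Cassels--Pfister substitution principle gives the stated equivalence between representability over $k[t]$ and over $k(t)$, so it suffices to prove the criterion over $k(t)$.

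For the \emph{if direction}, I would handle case (i) first: when $q_v$ is not definite at any real place of $k$, Meyer's theorem (\cite[VI.3.5]{Lam}) combined with $r \geq 5$ makes $q$ isotropic over $k$, hence universal over $k(t)$ by \cite[I.3.4]{Lam}; this is exactly the observation recorded in Remark \ref{remmain2}\,(ii). In case (ii), I would apply Theorem \ref{main2bis} with the simplest choice $\cM := \cO_{\P^1_k}$. Condition \ref{itext} then follows from the sign hypothesis in (ii) together with $\cl_v(\cO_{\P^1_k}) = 0$. Condition \ref{iitext} at $p$-adic places is handled by Remark \ref{rem41}\,(iii), which is where Proposition \ref{propmiracle} enters. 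At a real place $v$ where $q_v$ has hyperbolic signature, Remark \ref{rem41}\,(ii) reduces \ref{iitext} to the requirement that on every connected component of $C_v(\R)$ on which $f$ carries the ``wrong'' sign for $\tq_v$, the class $\cl_v(D)$ vanish. Since $\P^1(\R)$ is connected, the two subcases of (ii) are tailored exactly to this: subcase (b) forbids any such bad component from existing, while subcase (a) gives $4 \mid d$ and hence $\cl_v(D) = 0$ directly.

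The \emph{only if direction} will reverse this analysis. Assuming $q$ represents $f$, Theorem \ref{main2bis} produces some $\cM = \cO_{\P^1_k}(n)$ satisfying both conditions. If (i) fails, some real place $v_0$ has $q_{v_0}$ definite, and condition \ref{itext} simultaneously yields the sign condition of (ii) and forces $n$ to be even. Suppose additionally that (b) fails at some real hyperbolic place $v_1$, say $f$ is nonpositive on all of $\P^1(\R) \setminus \{\infty\}$. Then Remark \ref{rem41}\,(ii) applied at $v_1$ to $\cM$ gives $\cl_{v_1}(\cM(D)) = 0$, i.e., $n + \lceil d/2\rceil \equiv 0 \pmod{2}$; combined with $n$ even this makes $\lceil d/2\rceil$ even. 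The final observation is that a nonzero polynomial that is nonpositive (or nonnegative) on all of $\R$ must have even degree---otherwise it changes sign at infinity---so $d$ is even, $\lceil d/2\rceil = d/2$, and consequently $4 \mid d$, establishing (a).

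I do not foresee a single dominant obstacle: the argument is essentially a direct unpacking of Theorem \ref{main2bis} in the explicit setting of $\P^1_k$, exploiting $\Pic(\P^1_k) \simeq \Z$, the connectedness of $\P^1(\R)$, and the fact that $D$ is concentrated at a single rational point. The most delicate bookkeeping will be the parity argument linking the Borel--Haefliger class $\cl_v(D) \equiv \lceil d/2\rceil \pmod{2}$, the sign behavior of $f$ at infinity, and the divisibility $4 \mid d$ in the only-if direction.
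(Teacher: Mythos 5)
Your proposal is correct and follows essentially the same route as the paper: Cassels--Pfister to pass between $k[t]$ and $k(t)$, Remark \ref{remmain2}\,(ii) for case \ref{Pi}, and then Theorem \ref{main2bis} on $\P^1_k$ with $\cM=\cO_{\P^1_k}$, using Remarks \ref{rem41}\,(i)--(iii) and the parity of $\cl_v$ on $\Pic(\P^1_k)\simeq\Z$ exactly as the paper does. Your bookkeeping with $\lceil d/2\rceil$ and the derivation of $4\mid d$ when (b) fails matches the paper's argument, just spelled out in slightly more detail.
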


\begin{proof}
That $q$ represents $f$ over $k[t]$ if and only if it represents it over $k(t)$ follows from the Cassels--Pfister theorem (see \cite[IX.1.3]{Lam}). To analyze when $q$ represents $f$ over $k(t)$, we apply Theorem \ref{main2bis} with $C=\P^1_k$. 

 If $q$ is indefinite at all real places of $k$, then $q$ represents $f$ in $k(t)$ (see Remark~\ref{remmain2}~(ii)). 
 Assume otherwise. Then the necessary condition that $f$ be nonnegative (\resp non\-positive) at positive definite (\resp negative definite) real places can only be satisfied if $f$ has even degree (otherwise it changes sign). Writing~$\deg(f)=2n$, one has $\cO_C(D)=\cO_{\P^1_k}(n)$ in the notation of Theorem \ref{main2bis}.
 
Condition \ref{itext} of Theorem \ref{main2bis} can only be satisfied with $\cM=\cO_{\P^1_k}(l)$ for~$l$ even. 
 For such a choice of $\cM$, condition \ref{iitext} of Theorem~\ref{main2bis} is satisfied at all real places if and only if either \ref{Pa} or \ref{Pb} holds (apply Remarks~\ref{rem41}\,(i) and (ii)).
 When this is the case, one can choose $\cM=\cO_{\P^1_k}$, as condition \ref{iitext} of Theorem \ref{main2bis} is then satisfied at all $p$-adic places (see Remark \ref{rem41}\,(iii)). This completes the proof.
 \end{proof}

\begin{rem}
In \cite[Corollaire 1 p.\,98]{Pourchet}, Pourchet only formulates a sufficient condition for $f$ to be represented by~$q$. He does not explicitly consider the (much easier) cases when~$q$ is indefinite at all real places of~$k$, or when $f$ has odd degree. He also restricts his statement to the (most interesting) case where $q$ has rank $5$.
\end{rem}

 \subsection{The Pythagoras number}
 \label{parPythagoras}
  
  Here is an application to the Pythagoras number.
  
 \begin{thm}
 \label{main1bis}
 Let $C$ be a connected smooth curve over a number field $k$. Then $p(k(C))\leq 5$.
 \end{thm}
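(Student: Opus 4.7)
The plan is to deduce Theorem~\ref{main1bis} from Corollary~\ref{coros}\,\ref{ctext} applied to the five-square quadratic form $q=\langle 1,1,1,1,1\rangle$, with only a routine reduction to the hypotheses of that corollary and a standard verification of nonnegativity.

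First I would reduce to the setting of Corollary~\ref{coros}. Replacing $C$ by its smooth projective completion does not change $k(C)$, and replacing $k$ by the algebraic closure of $k$ inside $k(C)$ (a finite, hence still number-field, extension of $k$) turns $C$ into a geometrically integral curve. We may therefore assume $C$ is a geometrically connected smooth projective curve over a number field, to which Theorem~\ref{main2bis} and its corollary apply.

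Next, let $f\in k(C)^*$ be a nonzero sum of squares. I would check that $f$ is nonnegative at every real place $v$ of $k$ in the sense of \S\ref{parth2}. Writing $f=\sum_i g_i^2$ with $g_i\in k(C)$ and fixing a smooth real point $x\in C_v(\R)$ at which $f$ is regular, a valuation-theoretic argument at $x$ shows that none of the $g_i$ may have a pole at $x$: the leading terms of the $g_i^2$ at the most negative order are nonzero reals whose sum is positive, so such a pole would force a pole of even order in $f$. Hence all $g_i$ are regular at $x$, and $f(x)=\sum_i g_i(x)^2\geq 0$.

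Finally, I would invoke Corollary~\ref{coros}\,\ref{ctext} with $q=\langle 1,1,1,1,1\rangle$. This form has signature $(5,0)$ at every real place of $k$, so it is positive definite at every real place and of hyperbolic signature at none; hypothesis \ref{ctext} is therefore satisfied. The corollary then asserts that $q$ represents $f$ in $k(C)$ if and only if $f$ is nonnegative at every real place of $k$ at which $q$ is positive definite, which by the previous step is automatic. Hence $f$ is a sum of five squares in $k(C)$, so $p(k(C))\leq 5$. The genuine difficulty has already been absorbed into Corollary~\ref{coros}\,\ref{ctext}, which itself rests on Theorem~\ref{main2bis} together with Proposition~\ref{propmiracle} to handle condition~\ref{iitext} at $p$-adic places; what remains here is purely formal.
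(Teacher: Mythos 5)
Your proof is correct and follows essentially the same route as the paper's: reduce to a geometrically connected smooth projective curve, observe that a nonzero sum of squares is nonnegative at every real place, and apply Corollary~\ref{coros}\,\ref{ctext} with $q=\langle 1,1,1,1,1\rangle$. The only difference is that you spell out the (correct) valuation-theoretic verification of nonnegativity at real points, which the paper leaves implicit.
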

 
 \begin{proof}
 We may replace $C$ with its smooth projective compactification. After replacing $k$ with its algebraic closure in $k(C)$, we may assume that $C$ is geometrically connected over $k$.
 Let $f\in k(C)$ be a sum of squares, which we may assume to be nonzero. It is nonnegative at all real places of $k$. It is therefore a sum of $5$ squares in $k(C)$ by Corollary \ref{coros}\,\ref{ctext} applied with $q:=\langle 1,1,1,1,1\rangle$.
  \end{proof}
 
 Theorem \ref{main1} is a simple consequence of Theorem \ref{main1bis}.
 
\begin{cor}
If $F$ is a field of transcendence degree $1$ over $\Q$, then $p(F)\leq 5$.
\end{cor}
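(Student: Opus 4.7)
The plan is to reduce to the finitely generated case, where $F$ becomes the function field of a curve over a number field, and then invoke Theorem~\ref{main1bis} directly.

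Concretely, let $f \in F$ be a sum of squares in $F$. Write $f = \sum_{i=1}^{n} g_i^{2}$ for some $g_i \in F$, and let $F_0 \subset F$ be the subfield of $F$ generated over $\Q$ by $f$ and the $g_i$. Then $F_0$ is finitely generated over $\Q$ and has transcendence degree at most $1$ over $\Q$. If I can show that $f$ is a sum of $5$ squares in $F_0$, this expression remains valid in $F$, which settles the corollary; thus the problem reduces to the case where $F$ is finitely generated over $\Q$.

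Now I split into two cases according to the transcendence degree of $F_0$. If the transcendence degree is $0$, then $F_0$ is a number field, and the theorem of Siegel recalled in the introduction gives $p(F_0) \leq 4 \leq 5$. If the transcendence degree is $1$, let $k$ be the algebraic closure of $\Q$ in $F_0$; this is a number field (because $F_0$ is finitely generated over $\Q$), and $F_0$ is a regular extension of $k$ of transcendence degree $1$, hence is the function field of a geometrically connected smooth projective curve $C$ over $k$. Theorem~\ref{main1bis} then yields $p(F_0) = p(k(C)) \leq 5$.

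There is no substantive obstacle here: the argument is entirely formal, and the only point requiring a moment of care is that taking a finitely generated subfield does not decrease the number of squares needed (which is clear, since any representation as a sum of $5$ squares in $F_0$ is also such a representation in $F$). All the actual content is hidden inside Theorem~\ref{main1bis}, which has already been established.
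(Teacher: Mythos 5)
Your proof is correct and follows essentially the same route as the paper: reduce to a finitely generated subfield and apply Theorem~\ref{main1bis}. The only cosmetic difference is that the paper arranges the subfield to have transcendence degree exactly $1$ (one may always adjoin a transcendental element of $F$), whereas you allow transcendence degree $0$ and dispose of that case with Siegel's theorem; both are fine.
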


\begin{proof}
Let $f\in F$ be a sum of squares. We wish to show that $f$ is a sum of $5$ squares in $F$. Replacing $F$ by a finitely generated subfield of transcendence degree~$1$ over~$\Q$ that contains $f$ and in which $f$ is a sum of squares, we may assume that~$F$ is finitely generated over $\Q$, and hence the function field of a connected smooth curve over a number field. One can then apply Theorem \ref{main1bis}.
\end{proof}

Our last goal is to explain the relation between Theorem \ref{main1bis} and Hilbert's 17th problem. The next proposition is a version of the solution to this problem given by Artin in \cite{Artin} and of its subsequent extension by Lang \cite[Theorem~9]{Lang}. We could not find it in this precise form in the literature.

\begin{prop}
\label{H17}
Let $X$ be a connected smooth variety over a field~$k$ with $2\in k^*$. 
Fix an element $f\in\cO(X)$. The following assertions are equivalent:
\begin{enumerate}[label=(\roman*)] 
\item for all field orderings $\succeq$ of $k(X)$, one has $f\succeq 0$;
\label{i}
\item for all real closures $k\subset R$ of $k$ and all $x\in X(R)$, one has $f(x)\geq 0$;
\label{ii}
\item for all real closed extensions $k\subset R$ and all $x\in X(R)$, one has $f(x)\geq 0$;
\label{iv}
\item $f$ is a sum of squares in $k(X)$.
\label{iii}
\end{enumerate}
If $k$ is a number field, they are also equivalent to:
\begin{enumerate}[label=(\roman*)] 
\setcounter{enumi}{4}
\item for all real places $v$ of $k$ and all $x\in X_v(\R)$, one has $f(x)\geq 0$.
\label{v}
\end{enumerate}
\end{prop}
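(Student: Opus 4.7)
The plan is to establish the cycle (iv) $\Rightarrow$ (iii) $\Rightarrow$ (ii) $\Rightarrow$ (i) $\Rightarrow$ (iv) among the first four assertions, and separately (ii) $\Leftrightarrow$ (v) when $k$ is a number field. The implication (iii) $\Rightarrow$ (ii) is immediate since every real closure of $k$ is a real closed extension of $k$. The two substantial inputs are Artin's theorem on Hilbert's 17th problem (for (i) $\Rightarrow$ (iv)) and the Artin--Lang substitution theorem (for (ii) $\Rightarrow$ (i)).

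For (iv) $\Rightarrow$ (iii), I would write $f = \sum_{i=1}^N g_i^2$ with $g_i \in k(X)$, and let $V \subseteq X$ be the Zariski-dense open complement of the poles of the $g_i$. On $V$, the identity $f = \sum_i g_i^2$ shows that $f \geq 0$ pointwise in any real closed extension $R$ of $k$. Given an arbitrary $x \in X(R)$, smoothness of $X$ implies that $V(R)$ is dense in $X(R)$ in the order topology of $R$, so continuity of the polynomial $f$ forces $f(x) \geq 0$. For (ii) $\Rightarrow$ (i), I would argue by contrapositive: if $f \prec 0$ in some ordering $\succeq$ of $k(X)$, restrict $\succeq$ to $k$ and let $R$ be the real closure of $k$ relative to this restriction; the Artin--Lang substitution theorem then yields $x \in X(R)$ with $f(x) < 0$, contradicting (ii). The remaining implication (i) $\Rightarrow$ (iv) is Artin's solution to Hilbert's 17th problem.

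In the number field case, (v) $\Rightarrow$ (ii) follows from the observation that each real closure $R$ of $k$ corresponds to an embedding $k \hookrightarrow R \hookrightarrow \R$ for a unique real place $v$ (with $R$ the algebraic closure of $k$ in $\R$), yielding $X(R) \subseteq X_v(\R)$. Conversely, for (ii) $\Rightarrow$ (v): if $f(x_0) < 0$ for some $x_0 \in X_v(\R)$, the set $\{x \in X_v : f(x) < 0\}$ is a nonempty semi-algebraic subset defined over $R$; by the Tarski--Seidenberg transfer principle, $\R$ and $R$ satisfy the same first-order theory of real closed fields, so this set also contains an $R$-point of $X_v$, contradicting (ii).

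The main obstacle will be correctly formulating and applying the Artin--Lang substitution theorem in the step (ii) $\Rightarrow$ (i): the version needed says that, given an ordering of $k(X)$ whose restriction to $k$ has real closure $R$, any sign condition on a finite set of rational functions realized in the $k(X)$-ordering is also realized at some $R$-point of $X$ (where those functions are regular). The density argument in (iv) $\Rightarrow$ (iii) is a secondary subtlety relying on smoothness of $X$ so that the $R$-points of Zariski-dense opens are dense in $X(R)$ in the order topology.
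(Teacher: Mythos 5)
Your proposal is correct and follows essentially the same route as the paper: Artin's theorem for the equivalence of (i) and (iv), the Artin--Lang substitution theorem (via the real closure of $k$ inside an ordering of $k(X)$) for (ii)$\Rightarrow$(i), and the Tarski--Seidenberg transfer principle to pass between real closures, real closed extensions, and real places. The only differences are organizational: the paper cites \cite[Theorem 6.1.9]{BCR} where you re-derive the density/continuity argument for (iv)$\Rightarrow$(iii), and it obtains (v) from (iii) directly rather than from (ii) by transfer; neither changes the substance.
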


\begin{proof}
The equivalence \ref{ii}$\Leftrightarrow$\ref{iv} follows from the Tarski--Seidenberg transfer principle \cite[Proposition 5.2.3]{BCR}.
The equivalence  \ref{i}$\Leftrightarrow$\ref{iii} is a theorem of Artin \cite[Satz 1]{Artin}.
That \ref{iii} implies \ref{ii} follows from \cite[Theorem~6.1.9~(i)$\Rightarrow$(iii)]{BCR}.

Assume that \ref{i} fails. Fix a field ordering $\succeq$ of $k(X)$ with $f\prec 0$. The real closure~$S$ of $K(X)$ with respect to $\succeq$ contains the real closure $R$ of $k$ with respect to~$\succeq$. As the compositum of $R$ and $k(X)$ in $S$ is isomorphic to $R(X_R)$, the field~$R(X_R)$ admits an ordering $\succeq$ with $f\prec 0$. It follows from \cite[Theorem~6.1.9~(iii)$\Rightarrow$(i)]{BCR} that there exists $x\in X(R)$ such that $f(x)<0$. This disproves~\ref{ii}.

If $k$ is a number field, then \ref{iv}$\Rightarrow$\ref{v}$\Rightarrow$\ref{ii} because the orderings of $k$ are in bijection with its real places (see \eg \cite[Satz 10]{AS}).
\end{proof}

\begin{cor}
\label{17}
Let $C$ be a connected smooth curve over a number field $k$ and fix~$f\in k(C)$. The following assertions are equivalent.
\begin{enumerate}[label=(\roman*)] 
\item
\label{a}
 for all field orderings $\succeq$ of $k(C)$, one has $f\succeq 0$;
\item
\label{b}
 for all real places $v$ of $k$ and all $x\in C_v(\R)$  that is not a pole of~$f$, one has~$f(x)\geq 0$;
\item 
\label{c}
$f$ is a sum of squares in $k(C)$;
\item 
\label{d}
$f$ is a sum of $5$ squares in $k(C)$.
\end{enumerate}
\end{cor}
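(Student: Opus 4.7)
The plan is to chain together the four implications in a cycle (d)$\Rightarrow$(c)$\Rightarrow$(a)$\Rightarrow$(b)$\Rightarrow$(c)$\Rightarrow$(d), using Proposition \ref{H17} to handle the equivalences among (a), (b), and (c), and invoking Theorem \ref{main1bis} for the only nontrivial new content, namely (c)$\Rightarrow$(d).

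First, the implication (d)$\Rightarrow$(c) is immediate and (c)$\Rightarrow$(a) is also immediate: any square is nonnegative under any ordering, so a sum of squares is nonnegative under any ordering of $k(C)$. To handle the remaining equivalences among (a), (b), and (c), I cannot apply Proposition \ref{H17} directly to $C$ since $f\in k(C)$ may have poles. I would instead let $X\subset C$ be the open complement of the (finite) pole locus of $f$. Then $X$ is a connected smooth variety over $k$ with $k(X)=k(C)$ and $f\in\cO(X)$, so Proposition \ref{H17} applies. For each real place $v$ of $k$, the real locus $X_v(\R)$ is exactly the set of real points of $C_v$ that are not poles of $f$. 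Thus condition (v) of Proposition \ref{H17} for $X$ matches condition (b) here, condition (i) matches (a), and condition (iii) matches (c). The equivalences (a)$\Leftrightarrow$(b)$\Leftrightarrow$(c) therefore follow from \ref{H17}.

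It remains to establish (c)$\Rightarrow$(d). This is where the substance of the corollary lies, but by the time we have reached this corollary the work has already been done. Indeed, Theorem \ref{main1bis} asserts that $p(k(C))\leq 5$, meaning that every sum of squares in $k(C)$ is a sum of at most $5$ squares. Applying this directly to $f$ yields (d).

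Since all four implications of the cycle are accounted for and none of them presents any difficulty beyond invoking Proposition \ref{H17} and Theorem \ref{main1bis}, there is no genuine obstacle. The only minor point to take care of is the restriction to the open subset where $f$ is regular before applying Proposition \ref{H17}, which is needed precisely because that proposition is stated for regular functions on a variety rather than for rational functions on a curve.
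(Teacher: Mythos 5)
Your proof is correct and is essentially the paper's proof, which simply combines Proposition \ref{H17} with Theorem \ref{main1bis}. Your extra care in passing to the open complement of the pole locus of $f$ before applying Proposition \ref{H17} is a valid and welcome detail that the paper leaves implicit.
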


\begin{proof}
It suffices to combine Theorem \ref{main1bis} and Proposition \ref{H17}.
\end{proof}

\bibliographystyle{myamsalpha}
\bibliography{Pythagoras}

\end{document}